\newcommand{\al}{\alpha}
\newcommand{\B}{\mathcal{B}}
\newcommand{\N}{\mathbb{N}}
\newcommand{\C}{\mathcal{C}}
\newcommand{\CC}{\mathbb{C}}
\newcommand{\ZZ}{\mathbb{Z}}
\newcommand{\RR}{\mathbb{R}}
\newcommand{\A}{\mathcal{A}}
\newcommand{\LL}{\mathcal{L}}
\newcommand{\bound}{\text{bound}}
\newcommand{\pp}{\mathbb{P}}  % projective space
\newtheorem{thm}{Theorem}[section]
\newtheorem*{thm*}{Theorem}
\newtheorem{corollary}[thm]{Corollary}
\newtheorem{lemma}[thm]{Lemma}
\newtheorem*{lemma*}{Lemma}
\newtheorem{definition}[thm]{Definition}
\newtheorem{example}[thm]{Example}
\newtheorem{prs}[thm]{Proposition}
\newtheorem{remark}[thm]{Remark}
\newtheorem{question}[thm]{Question}
\newtheorem*{notation*}{Notation}
\begin{document}
\title{On Left regular bands and real Conic-Line arrangements}

\author{Michael Friedman and David Garber}

\address{Michael Friedman, Institut Fourier, 100 rue des maths, BP 74, 38402 St Martin d'H\'eres cedex, France and   Image Knowledge Gestaltung. An Interdisciplinary Laboratory,	
  Cluster of Excellence, Humboldt-Universität zu Berlin
  Unter den Linden 6, 10099 Berlin}
\email{michael.friedman@hu-berlin.de}

\address{David Garber, Department of Applied Mathematics, Faculty of Sciences, Holon Institute of Technology, Golomb 52,
PO Box 305, Holon 58102, Israel and (Sabbatical:) Einstein Institute of Mathematics, Hebrew University of Jerusalem, Jerusalem, Israel}
\email{garber@hit.ac.il}

\begin{abstract}
An arrangement of curves in the real plane divides it into a collection of faces.   In the case of line arrangements, there exists an associative product which gives this collection a structure of a left regular band. A natural question is whether the same is possible for other arrangements. In this paper, we try to answer this question for the simplest generalization of line arrangements, that is, conic--line arrangements.

Investigating the different algebraic structures induced by the face poset of a conic--line arrangement, we present two different generalizations for the product and its associated structures: an alternative left regular band and an associative aperiodic semigroup. We also study the structure of sub left regular bands induced by these arrangements. We finish with some chamber counting results for conic--line arrangements.
\end{abstract}

\date{\today}

\maketitle

\section{Introduction}

An arrangement of curves $\A$ in $\RR^2$ induces a partition of the plane into a collection of faces, denoted by $\LL(\A)$. For a line arrangement $\A$, the set  $\LL(\A)$ already gives rise to a variety of interesting questions, lying in the intersection of several mathematical areas: algebra, geometry, topology and combinatorics. For example, one can define
a product on $\LL(\A)$, making this set of faces into a left regular band (LRB), that is, a semigroup whose every element is an idempotent and for every two elements $x,y$ in it the following property is satisfied: $x\cdot y \cdot  x = x \cdot y$ (see
\cite{B,B2} and \cite[Section 3]{MSS} for surveys on bands and examples of left regular bands).

Moreover, as $\LL(\A)$ determines the combinatorics of the line arrangement $\A$, one can ask what are the connections between $\LL(\A)$ and other invariants of these arrangements. The relations between the face LRB on $\A$ and the combinatorics of $\LL(\A)$ can be found in the numerous restriction-deletion principles:  Zaslavsky's chamber counting formula \cite{Z}, the restriction-deletion formula for the Poincar\'{e} polynomial $\pi(\A,t)$ and  the addition-deletion theorem for the module $D(\A)$ of $\A$-derivations, see e.g. \cite{OT} and Section \ref{secApp} here. Other applications can be found in random walks on the faces of a hyperplane arrangement \cite{B}, in the ongoing investigation of the connections between the fundamental group $\pi_1(\CC^2-\A)$ of the complement of $\A$ and the combinatorics of $\A$ (see e.g. \cite{ATY,ABGBVS,EGT1,EGT2,FG,GTV,GBVS,WY,Ye} and many more), and in the description of the algebra $kS$ in terms of quivers (see \cite{Sal1,Sal2}).

\medskip

A natural question is what happens to these algebraic structures, associated to $\LL(\A)$, when one deals with  arrangements of smooth curves in $\RR^2$; i.e. topologically speaking, when we deal with a real conic--line arrangement $\A$ in $\RR^2$. Can one associate an algebraic structure to $\LL(\A)$ in a way that it preserves some of the algebraic properties that an LRB satisfies? How would this reflect the proposed algebraic structure that a conic--line arrangement has?

This investigation already took place to some extent. Zaslavsky \cite{Z2} generalized the restriction-deletion formula in several directions and a study of the fundamental group $\pi_1(\CC^2-\A)$ for some families of conic-line arrangements has taken place, see e.g. \cite{AT3,AT1,FG,FG2,Tok}. Also, in \cite{ST},
the existence of other restriction-deletion formulas with respect to the module $D(\A)$ of $\A$-derivations for a quasihomogeneous free conic--line arrangement $\A$ was proven.

However, a deeper investigation of the possible algebraic structures of $\LL(\A)$ associated to a conic--line arrangement $\A$ is needed. In this paper, we show that one of these structures is  a natural candidate to be an alternative product. This kind of phenomenon - an alternative product that replaces the associative one - is not unnatural: it appears also when looking at the poset of the faces of a  building (see Tits \cite[Section 3.19]{T}), and more generally, in a {\it projection poset} (see \cite[p. 26]{AM} for its definition). We will see that one of the products we define for the face poset of a conic--line arrangement will indeed be alternative. Therefore, a first step for understanding the connections between the above mentioned structures is the investigation of the algebraic structure of the face poset $\LL(\A)$ associated to a real conic--line arrangement $\A$ and its applications.

\medskip

The purpose of this paper is to study these structures associated to real conic--line arrangements.  Before stating the main results of this paper, we explicitly introduce the notion of a {\it real conic--line arrangement} which will be used in this paper:

\begin{definition}\label{defRealCLarr}
A {\em real conic--line (CL) arrangement} $\A$ is a collection of ellipses, parabolas and lines defined by the equations $\{f_i=0\}$ in $\CC^2$, where $f_i \in \RR[x,y]$. %and every singular point of the arrangement is in $\RR^2$.
Moreover, for every conic $C \in \A$, $C \cap \RR^2$  is not an empty set, neither a point nor a (double) line.
\end{definition}

Note that we do not include hyperbolas in our definition, since the two sheets of a hyperbola $C_0 = \{f_0(x,y)=0\}$ are not connected in the real plane and hence the hyperbola divides the real plane $\RR^2$ into {\em three} different regions, in contrast to the situation that for each point $(x_1,y_1)\not\in C_0$, we have either $f_0(x_1,y_1)> 0$ or $f_0(x_1,y_1) < 0$, i.e. there are only {\em two} possible options. As the investigation on the poset of faces is based on a one-to-one correspondence between the tiling of the plane induced by the curve and its equation, we leave the case of a hyperbola for a future investigation.

\medskip

We now state the main results of this paper, according to the order of their appearance in the paper.
Section \ref{secFaceSemi} looks for the natural generalization of the structure of $\LL(\A)$ from the case of hyperplane arrangements to the case of real CL arrangements. Based on the problems one encounters during this generalization, we propose two different possibilities for a well-defined product on this set: the first (appears in Definition \ref{defPart2}) turns $\LL(\A)$ into an alternative LRB and the second (appears in Definition \ref{defAssocProd}) turns $\LL(\A)$ into an aperiodic semigroup. In Section \ref{subsec_App}, we present some applications of these generalized products: random walks on this LRB and the possible connection to stereographic projections.

Section \ref{secStrSubLRB} investigates the embedding principles for sub-LRBs for a given LRB, induced by a real CL arrangement. Connections between the band, induced by restricting the real CL arrangement to a conic or to a line, and the band induced by the whole arrangement, are presented. We start by presenting the embedding principle for LRBs induced by line arrangements (Lemma \ref{lemLineArrLRB}). Then, in Section \ref{subsec-3.2}, we define the notion of an LRB of a pointed curve, and based on it, we prove the general embedding principle for LRBs induced by arrangements of smooth real curves (Proposition \ref{prsEmbed}).

The main result of Section \ref{secComb} is a generalization of the restriction-deletion principle for chamber counting in line arrangements (appears in Equation (\ref{eqnResDelHyperplane}) in Section \ref{subsecDelRes}) to the case of CL arrangements (see Proposition \ref{prop-res-del-CL}).

Section \ref{secApp} is an appendix which presents several relevant basic topics in the theory of line arrangements, for the convenience of the reader.

\bigskip

\textbf{Acknowledgements:} We would like to thank Mikhail Zaidenberg, Franco Saliola and especially Stuart Margolis and Benjamin Steinberg for stimulating and inspiring talks. We also thank the anonymous referee of an earlier version of this paper for giving stimulating advices.

The first author would like to thank the Max-Planck-Institute f\"ur Mathematik in Bonn, the Fourier Institut in Grenoble and the Excellence Cluster: {\it Bild Wissen Gestaltung. Ein Interdisziplin\"ares Labor} in Berlin for the warm hospitality and support, where the research of this paper was carried out.

%%%%%%%%%%%%%%%%%%%%%%%%%%%%%%%%%%%%%%%%%%%%%%%%%%%%%%%%%%%%%%%%%%%%%%%%%%%%%%%%%%%%%5

\section[CL arrangements: The face semigroup]{Real CL arrangements: The face semigroup}\label{secFaceSemi}

In this section, we concentrate on the structure of the face semigroup of real CL arrangements. In Section \ref{subsecLRB_Hyp}, we review the corresponding known structure of the face semigroup $\LL(\A)$ associated to a hyperplane arrangement $\A$. In Section \ref{subsec_semiCL}, we study the corresponding face semigroup in the case of a real CL arrangement. The main results of this section appear in Section \ref{subsec_product}, where we introduce two different generalizations for the corresponding product defined for hyperplane arrangements to the case of real CL arrangements: one product turns $\LL(\A)$ into an alternative LRB, while the other turns $\LL(\A)$ into an aperiodic semigroup.
In Section \ref{subsec_App}, we present several applications, emphasizing the differences between CL arrangements and hyperplane arrangements:
Section \ref{subsecRandomWalk} shortly presents random walks on LRBs induced by CL arrangements and Section \ref{subsec_strProj} proposes a possible connection between plane arrangements in $\RR^3$ and CL arrangements in $\RR^2$, showing  that the LRB of the induced CL arrangement might differentiate between central plane arrangements whose LRBs are isomorphic. In Section \ref{subsecLRB_cl}, we present an example of an LRB induced by a CL arrangement, which is not induced by any hyperplane arrangement.

\subsection{Preliminaries: The left regular band and the face semigroup of a hyperplane arrangement} \label{subsecLRB_Hyp}

In this section, we recall the notion of  a {\it left regular band} (LRB) and its connections to the combinatorics of hyperplane arrangements (see also surveys in \cite{B,B2,MSS}).

\medskip

Let $\A = \{ H_1,\dots, H_n \} $ be a hyperplane arrangement (not necessarily central) in $\RR^N$ consisting of $n$ hyperplanes, where $H_i$ is defined by the equation $\{f_i = 0\}$, where $f_i \in \RR[x_1,\ldots,x_N]$. Recall that
for $H \in \A$, the arrangement
$\A^H = \A - \{H\}$ is called the \emph{deleted arrangement} in $\RR^N$ and
 $\A_H = \left\{ K \cap H \ | \ K \in \A^H \right\}$ is called the \emph{restricted arrangement} in $H$. Let
${\mathcal C}(\A)$ be  the set of chambers of $\A$, which are the connected components\footnote{$\ $ Note that when we use the notion ``connected components'' we refer to connected subsets of $\RR^N - \A$, and when we use the notion ``components'' we refer to the elements of the arrangement $\A$.} of  $\RR^N - \A$. Note that these connected components are relatively open sets. Let $L=L(\A)$ be the semi-lattice of non-empty intersections of elements of $\A$.

Define the partially ordered set of faces as:
$$\LL = \LL(\A) = \bigcup_{X \in L} \C(\A_X),$$
where $\LL$ is ordered by inclusion in the closure of the larger face, which will be denoted by $\preceq$,
i.e. $P_1 \preceq P_2$ if $P_1 \subseteq \overline{P_2}$ (some authors order $\LL$ by \emph{reverse} inclusion).

\medskip

Here, we recall the definition of the {\em dimension} of a face (see e.g. \cite{massey}):

\begin{definition}
For any face $P \in \LL$, define the {\em dimension} of a face $P$, denoted by ${\rm dim} (P)$, to be the integer $n$ such that each point in $P$ has an open neighborhood homeomorphic to the open $n$-dimensional ball
$U^n= \left\{ x \in \RR^n \ | \ |x| <1 \right\}$.
\end{definition}

For example, given a line arrangement $\A$ in the plane, the connected components of $\RR^2-\A$ have dimension $2$, the connected segments contained in the components$^1$ of $\A$ have dimension $1$ and the intersection points have dimension $0$.

\begin{remark} \label{rem-comb}\ \\
{\rm (1) Note that for any two faces, a face is not contained in a different face (but may be only contained in its closure). \\
(2) The poset $\LL$ determines the combinatorics of the arrangement $\A$. Explicitly, by the information given by $\LL$, one can  reconstruct the combinatorial data associated to the arrangement $\A$. By \emph{combinatorial data} we mean the number of lines, the number of intersection points and their multiplicity (i.e. how many lines pass through an intersection point), and also which points are contained in each line. Hence, the poset $\LL$ also determines any combinatorial invariant of $\A$, which is determined by this combinatorial data of $\A$.}
\end{remark}

Define a (monomorphic) function $i : \LL \to (\{+,-,0\})^n$, as follows:  for a face $P \in \LL$, define:
$$(i(P))_k = \text{sign}(f_k(P)),$$
where $(i(P))_k$ denotes the value of the $k^{\text{th}}$ coordinate of the vector $i(P)$. The generalization of this function to complex hyperplane arrangements already appeared in \cite{Bj2}, where the vector $i(P)$ is called there {\it a complex sign vector}.

We denote by $\LL_0(\A)$ the image of the function $i$, i.e. ${\rm Image}(i) = \LL_0(\A)$.

\medskip

We now introduce the main algebraic structure used in this paper:
\begin{definition}
A {\em left regular band} \emph{(}{\em LRB}\emph{)} is a semigroup $(S,\cdot)$ that satisfies the identities:
$$ x \cdot x=x \ \ \text{and} \ \ x\cdot y\cdot x=x\cdot y \text{ for every $x,y \in S$.}$$
\end{definition}

It is well-known that one can define an associative product on the set $\{+,-,0\}$,
given by $x\cdot y = x$ if $x \neq 0$, and $y$ otherwise. This product induces an LRB structure on $\{+,-,0\}$, which is denoted  by $L_2^1$. This product can be extended componentwise to a product on $\left( L_2^1 \right)^n$. Thus, ${\rm Image}(i) = \LL_0(\A) \subseteq (\{+,-,0\})^n=\left( L_2^1 \right)^n$ has the structure of an LRB, and therefore also $\LL(\A)$, when we identify it with ${\rm Image}(i)$. Note that $\LL(A) \simeq \LL_0(A)$ since $i$ is monomorphic.

For hyperplane arrangements, this product has been given a geometric meaning: for $F,K\in \LL$, the product  $F \cdot K$ is the face that we are in after moving a small  distance from a generic point of the face
$F$ towards a generic point of the face  $K$ along a straight segment connecting these points (see e.g. \cite{B,BD} and \cite[Section 1.4.6]{AB}).

\begin{remark} \label{remDifSignIsoLRB}
\rm{Note that the embedding $i$ depends on the choice of the functions $\{ f_j \}$: let $\A = \{ H_1,\dots, H_n \}$ be a hyperplane arrangement, where the component $H_i$ is defined by $\{f_i=0\}$. Denote by $i(\LL(\A))$ the embedding of $\LL(\A)$ into $(L^1_2)^n$.
Let $J$ be a nonempty subset of $\{1,\ldots,n\}$ and define $g_j = -f_j$ if $j \in J$ and $g_j =f_j$ otherwise. Let $H'_i \doteq \{g_i=0\}$ and $\A' = \{ H'_1,\dots, H'_n \}$. Obviously, $\A' = \A$. However,
as the LRB structure on $\LL(\A)$ is defined by the sign function, the embedding of $\LL(\A')$ into $(L^1_2)^n$ will be different than the embedding of $\LL(\A)$ (that is, as sets, $i(\LL(\A')) \neq i(\LL(\A))$; explicitly: $(i(\LL(\A')))_j = -(i(\LL(\A)))_j$ for all $j \in J$), but still the two LRBs will be isomorphic.}
\end{remark}

\subsection{The semigroups $\LL $ and $\LL_0$ for CL arrangements}\label{subsec_semiCL}

Let  $\A = \{ H_1,\dots, H_n \}  \subset \RR^2$ be a real CL arrangement with $n$  components, and let $f_i \in \RR[x,y]$ be the corresponding
forms of the component $H_i$ for $1 \leq i \leq n$. As before, let $L = L(\A)$ be the poset of non-empty intersections (i.e. the non-empty intersections of elements of $\A$, ordered by inclusion). In most of the CL arrangements, the poset $L$  does not have a greatest lower bound, when there is at least one conic, since in that case it might be that not all the components
can pass through one point, which should have functioned as the greatest lower bound (this property holds also in the case of non-central line arrangements). For example, given the CL arrangement in Figure \ref{notMeetSemi}(a), its lattice is presented in Figure \ref{notMeetSemi}(b), and in this lattice there is no (greatest) lower bound for the elements $\{p_3 \}$ and $\{ p_1,p_2\}$. However, one may add artificially an element $e$, as a greatest lower bound, in order to transform $L$ into a meet-semilattice. In any case, $L$ is always a join-semilattice, since a least upper bound always exists.

\begin{figure}[h]
\epsfysize=5cm \centerline{\epsfbox{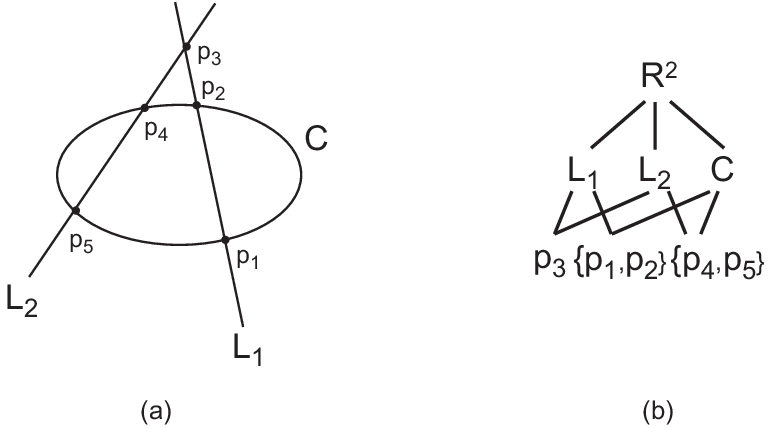}}
\caption{An example showing that in the lattice of a CL arrangement, a greatest lower bound of two elements does not necessarily exist:  the elements $\{p_3 \}$ and $\{ p_1,p_2\}$ do not have a (greatest) lower bound.}\label{notMeetSemi}
\end{figure}

Recall the set  $\A_X = \left\{ K \cap X \ |\ K \in \A - \{X\} \right\}$ for $X \in L$. This set may not be a CL arrangement,
but still we may  regard it as an object for which one can also associate a set of chambers ${\mathcal C}(\A_{X})$. For example, if $X$ is a conic, $\A_{X}$ is an arrangement of $n_X$ points $\{p_1,\dots,p_{n_X}\}$  (which are the intersection points between $X$ and the other components of $\A$) on $X$ and in that case ${\mathcal C}(\A_X)$ would be the set of the connected components of $X - \{p_1,\dots,p_{n_X}\} $.

Hence, we can define the partially-ordered set of faces as:
$$\LL = \LL(\A)= \bigcup_{X \in L} \C(\A_X),$$
where $\LL$ is ordered by inclusion in the closure of the larger face. We denote this partial order by $\preceq$.

As before, define a function:
$$i : \LL \to (\{+,-,0\})^n$$
as follows: $(i(P))_k = \text{sign}(f_k(P)),$ where $(i(P))_k$ is the value of the $k^{\text{th}}$ coordinate of the vector $i(P)$.

\begin{definition}
Define $\LL_0 = \LL_0(\A) = {\rm Image}(i) \subseteq (L_2^1)^n$. %As $L_2^1$ is an LRB, so does $\LL_0$.
\end{definition}

A detailed example of $\LL, \LL_0$ and $L$ can be found in Example \ref{Example2.14} below.

\medskip

\begin{remark}\label{rem2.7}
{\rm Note that the function $i$ is indeed well-defined also for the faces of a CL arrangement, since any face is connected and therefore any point in the face has the same mutual position with any component of the CL arrangement. Explicitly, if $a_1,a_2$ are two points in a face $P$, then since $P$ is connected, there exists a path $p:[0,1] \to P$ such that $p(0)=a_1$, $p(1)=a_2$ and for every $t_1,t_2 \in [0,1]$, $i(p(t_1)) = i(p(t_2))$.}
\end{remark}

\begin{remark}\label{rem2.8}
{\rm We remind that we exclude the hyperbolas from our family of CL arrangements since they divide the real plane into {\em three} chambers, despite the fact that two of these chambers have the same sign with respect to the hyperbola. However, if we work in $\RR\pp^2$, then these two chambers will be unified to the same chamber and hence CL arrangements in $\RR\pp^2$ can include hyperbolas as well.}
\end{remark}

We study some properties of the image of the function $i$, denoted by Image$(i)$.
Note that for real hyperplane arrangements, the function $i$ is monomorphic: every face $P$ is uniquely determined  by its vector of $n$ signs.
However, for real CL arrangements, this function might not be monomorphic.
For example, given a line $H_1$ and a circle $H_2$ tangent to it (see Figure \ref{ChamberConic2}(a)), the two parts of the line $H_1$ have the same pair of signs $(0,+)$. Another example is presented in Figure \ref{ChamberConic2}(b), where we have that:
$$i(P_1) = i(P_2) = (+,-,+)\in \left( L_2^1 \right)^3.$$

\begin{figure}[h]
\epsfysize=4cm
\centerline{\epsfbox{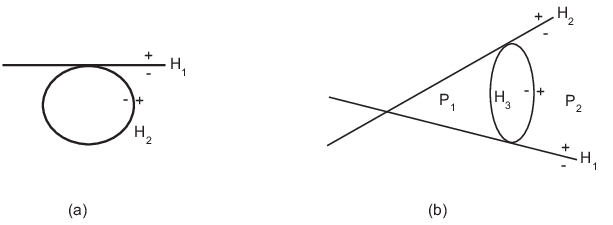}}
\caption{In part (a), the two parts of the line $H_1$ have the same pair of signs $(0,+)$.
Part (b) presents another real CL arrangement consisting of three components $H_1,H_2,H_3$ with two 2-dimensional faces $P_1,P_2$ having the same vector of signs: $$i(P_1)=i(P_2)=(+,-,+).$$}\label{ChamberConic2}
\end{figure}

Recall that one can define an associative product on $(L_2^1)^n = (\{+,-,0\})^n$ (see Section \ref{subsecLRB_Hyp}). We start with the following lemma regarding the relation between the product and the order relation:
\begin{lemma}\label{Lemma2.9}
Let $P_1,P_2$ be two faces of a CL arrangement $\A$. If $P_1 \preceq P_2$, then: $i(P_1)i(P_2)=i(P_2)$.
\end{lemma}

\begin{proof}
Recall that $P_1 \preceq P_2$ means that $P_1 \subseteq \overline{P_2}$. On the other hand, by the definition of the product, the equality $i(P_1)i(P_2)=i(P_2)$ means that any non-zero coordinate in $i(P_1)$ is equal to the corresponding coordinate in $i(P_2)$.

Assume that $P_1 \preceq P_2$. By definition, $P_1 \subseteq \overline{P_2}$, and since a face is not contained in the interior of a different face (see Remark \ref{rem-comb}(1)), then $P_1 \subseteq \partial P_2$, i.e. $P_1$ is contained in the boundary of $P_2$. Assume on the contrary that $i(P_1)i(P_2) \neq i(P_2)$, then there exists an index $j$ such that
$(i(P_1))_j \neq 0$ and $(i(P_1))_j \neq (i(P_2))_j$. Since the corresponding component $H_j$ of $\A$ divides $\RR^2$ into two parts: $\RR^2-H_j=R_1 \cup R_2$, it follows that $P_1$ and $P_2$ are not contained in the same part (due to the different signs in the $j^{\rm th}$ coordinate). Without loss of generality, we can assume that $P_1 \subset R_1$ and $P_2 \subset R_2$. Hence, $\overline{P_2} \subset \overline{R_2}=R_2 \cup H_j$. Since
$R_1 \cap (R_2 \cup H_j) =\emptyset$, this implies that $P_1 \not\subseteq \overline{P_2}$, which contradicts the assumption.
\end{proof}

Note that the converse of Lemma \ref{Lemma2.9} does not hold: In Figure \ref{prodNotGeo}(a) below, we have that
$i(p)i(x) = i(x)$, but $p \not\preceq x$.

\medskip

The following question is now raised: does this product give
${\rm Image}(i)$  the structure of a sub-semigroup of $(L_2^1)^n$? For hyperplane arrangements,  the answer is positive as one identifies $\LL$ with ${\rm Image}(i)$; thus $\LL$ is endowed with  a
semigroup structure. However, for real CL arrangements, as $i$ is not necessarily monomorphic (see Figures \ref{ChamberConic2}(a) and \ref{ChamberConic2}(b)), one  cannot identify
$\LL$ with ${\rm Image}(i)$ (and thus we need to  redefine the product on $\LL$).
A more serious problem is presented in the following example.

\begin{example} \label{exmNonClosedI}
\rm{There are real CL arrangements whose ${\rm Image}(i)$  is not even closed under the product induced by $(L_2^1)^n$, and thus it is not even a semigroup.

(1) For example, take three lines $H_1,H_2,H_3$ in general position (i.e. $H_1,H_2,H_3$  are not passing through a single point) and a circle $H_4=C$ passing through the three intersection points of the lines; see Figure \ref{NotClosedMulti}. Let $\al,\beta \in {\rm Image}(i) \subset  (L_2^1)^4$ be two quadruples of signs  associated to  two different triple intersection points (the points are $a$ and $b$ in Figure \ref{NotClosedMulti}). Explicitly:
$$\al = i(a) = (0,+,0,0),\,\,\, \beta = i(b)= (0,0,-,0).$$
Though $\al,\beta \in {\rm Image}(i)$, $\al  \beta = (0,+,-,0) \not\in {\rm Image}(i)$, since there is no face which corresponds to the quadruple of signs $\al  \beta = (0,+,-,0)$, as there is no  element in ${\rm Image}(i)$ that has exactly two zeros in its presentation as a quadruple in   $(L_2^1)^4$.

\begin{figure}[h]
\epsfysize=5cm \centerline{\epsfbox{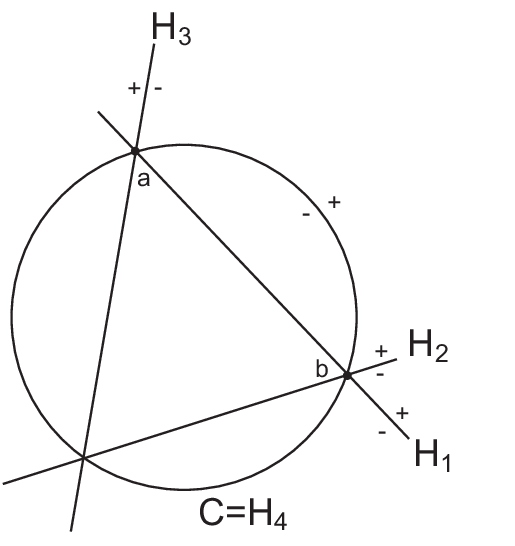}}
\caption{An example showing that Image$(i)$ is not necessarily closed under the product induced by $(L^1_2)^n$: $\al = i(a) = (0,+,0,0), \,\, \beta = i(b)= (0,0,-,0)$, but $$\al  \beta = (0,+,-,0) \not\in {\rm Image}(i).$$}\label{NotClosedMulti}
\end{figure}

Note that this is the minimal degree example for this phenomenon to occur: one can verify that for any real CL arrangement with degree $n \leq 4$, ${\rm Image}(i)$ is always closed under the  product induced by $(L_2^1)^n$.

Moreover, taking three generic lines  and a circle passing through only two of the intersection points, one can check that the product of the corresponding vectors of signs of the pair of triple points of this CL arrangement does not represent any face of this CL arrangement (by the same argument as above).

(2) The above example can be generalized: take an $n$-gon, where $n>3$, draw a circle passing through all the vertices of the $n$-gon, and extend the edges of the polygon into straight lines. One can easily check that the product of the corresponding vectors of signs of any pair of consecutive triple points of this CL arrangement does not represent any face of this CL arrangement (by the same argument as above).
}
\end{example}

By the previous examples, the following question arises:
\begin{question} \ \\
What are the necessary and sufficient conditions on $\A$ under which $\LL_0(\A)$ is closed under the product of $(L^1_2)^n$?
\end{question}

Obviously, if $\A$ is a line arrangement, then $\LL_0(\A)$ is a semigroup.  In order to partially answer this question, we recall the following definition:

\begin{definition}\label{def_inter_mul}
Let $a = (a_x,a_y) \in \CC^2$ be an intersection point of two curves given by the equations $\{f(x,y) = 0\}$ and $\{g(x,y) = 0\}$  where $f,g \in \CC[x,y]$. The {\em intersection multiplicity} of $a$, denoted by ${\rm multi}(a)$, is:
$${\rm multi}(a) = \dim\,\left( \CC[x,y]_{(x-a_x,y-a_y)}/(f,g) \right),$$
where $\CC[x,y]_{(x-a_x,y-a_y)}$ is the localization of the ring $\CC[x,y]$ at the ideal ${(x-a_x,y-a_y)}$  (see e.g. \cite[Chapter 1]{Ful}).
\end{definition}

For example, for $f = \{y=0\}$ and $g = \{y=x^2\}$, the point $a=(0,0)$ has multiplicity $2$,  so we write: ${\rm multi}(a)=2$.

\medskip

We introduce a sufficient condition on a real CL arrangement $\A$ for asserting  $\LL_0(\A)$ being a semigroup:

\begin{prs} \label{prsL0semigrp}
Let $\A$ be a real CL arrangement. Assume that for every singular point $p$ (i.e. a point for which at least two components of $\A$ pass through it) there are {\em only  two components} passing through it. Then
$\LL_0(\A)$ is a semigroup.
\end{prs}

\begin{proof}
We have to check that $\LL_0(\A)$ is closed under the product induced by  $(L_2^1)^n$. For each face $c \in \LL(\A)$, we have to go over all the possible products of the form $i(c) i(a)$, where $a \in \LL(\A)$, and check that $i(c)i(a) \in \LL_0(\A)$. The proof depends on the dimension of the face $c$.

If $\dim(c)=2$, there is nothing to check, as $i(c)i(a) = i(c)$ for every $a \in \LL(\A)$, since all the entries of $i(c)$ are non-zero and obviously $i(c) \in \LL_0(\A)$.

If $\dim(c)=1$, let $H = \{f=0\}$ be the component on which the face $c$ lays. Without loss of generality, we can assume that $H = H_1$, i.e. $(i(c))_1=0$ and $(i(c))_j \neq 0$ for $j>1$. Hence $i(c)i(a)$ is either $i(c)$ or an element $\beta \in (L^1_2)^n$ such that $(\beta)_1\in \{+,-\}$ and $(\beta)_j = (i(c))_j$ for all $j>1$, which means that $\beta$ is the image of one of the faces that has $c$ in its boundary (which lies inside the domain $\{f>0\}$ or $\{f<0\}$), and therefore $i(c)i(a) \in \LL_0(\A)$.

If $\dim(c)=0$, then $c$ is a point and by the assumption given in the formulation of the proposition, there are {\it exactly} two components passing through $c$. Without loss of generality, we can assume that the two components that pass through $c$ are $H_1$ and $H_2$ (where the other components in the CL arrangement are $H_3,\dots,H_n$). Since there exists a neighbourhood of $c$ that does not intersect  $H_3,\dots,H_n$, this means that for every face $b$, satisfying $c \in \overline{b}$, we have that $(i(b))_k = (i(c))_k$ for all $3 \leq k \leq n$. Since $(i(c))_k \neq 0$ for  $3 \leq k \leq n$ (otherwise $c$ would also be contained in one of the components $H_k$, where $3 \leq k \leq n$, which is impossible), we have that $(i(c))_k (i(a))_k = (i(c))_k$ for all $3 \leq k \leq n$. This means that with respect to the components $H_3,\dots,H_n$, the face $i(c)i(a)$ is in the neighbourhood of $c$ (i.e. in one of the faces $b$ mentioned above), since with respect to these components, the mutual position has not been changed. Thus, we just have to prove that when looking on the first two entries we also get an element in $\LL_0(\A)$.

We look now at two cases: either ${\rm multi}(c) \equiv 0 ({\rm mod}\, 2)$ or ${\rm multi}(c) \equiv 1 ({\rm mod}\, 2)$.

If  ${\rm multi}(c) \equiv 1 ({\rm mod}\, 2)$, then $c$  is either a node or a tangent point of multiplicity $3$. Therefore {\em locally}, in the neighbourhood of $c$, the CL arrangement $\A$ is of the form $\{xy=0\}$ or $\{y(y-x^3)=0\}$ (note that this {\em does not mean} that the curve itself has degree $3$). Note that as arrangements in $\RR^2$, $\LL_0(\{xy=0\}) = \LL_0(\{y(y-x^3)=0\}) = (L_2^1)^2$ and therefore $i(c)i(a) \in \LL_0(\A)$ for every $a \in \LL(\A)$.

If  ${\rm multi}(c) \equiv 0 ({\rm mod}\, 2)$, then $c$ is a tangent point of multiplicity $2$ or $4$, then {\em locally}, in the neighbourhood of $c$, the CL arrangement $\A$ is of the form $\{y(y-x^2)=0\}$ or $\{(y+x^2)(y-x^2)=0\}$ (note that this {\em does not mean} that the curve itself has degree $2$ or $4$), and thus, as arrangements in $\RR^2$, we have:
$$L_0 \doteq \LL_0(\{y(y-x^2)=0\}) =  \LL_0(\{(y+x^2)(y-x^2)=0\})  = (L_2^1)^2 - \{(-,+),(-,0),(0,+)\},$$
\noindent
where the first coordinate corresponds to the line $\{y=0\}$ or to the curve $\{y+x^2=0\}$ and the second coordinate corresponds to the curve $\{y-x^2=0\}$.
As can be easily checked,  $\LL_0(\A)$ is closed under this product, which means that $i(c)i(a) \in \LL_0(\A)$ for every $a \in \LL(\A)$.
\end{proof}

\begin{example}\label{Example2.14}
\rm{In Figure \ref{exampleDef} we present a CL arrangement $\A$, which consists of a conic $H_2$ and two lines $H_1$ and $H_3$, as an example that illustrates the definitions introduced so far: $L,\LL$ and $\LL_0$.

\begin{figure}[h]
\epsfysize=16cm \centerline{\epsfbox{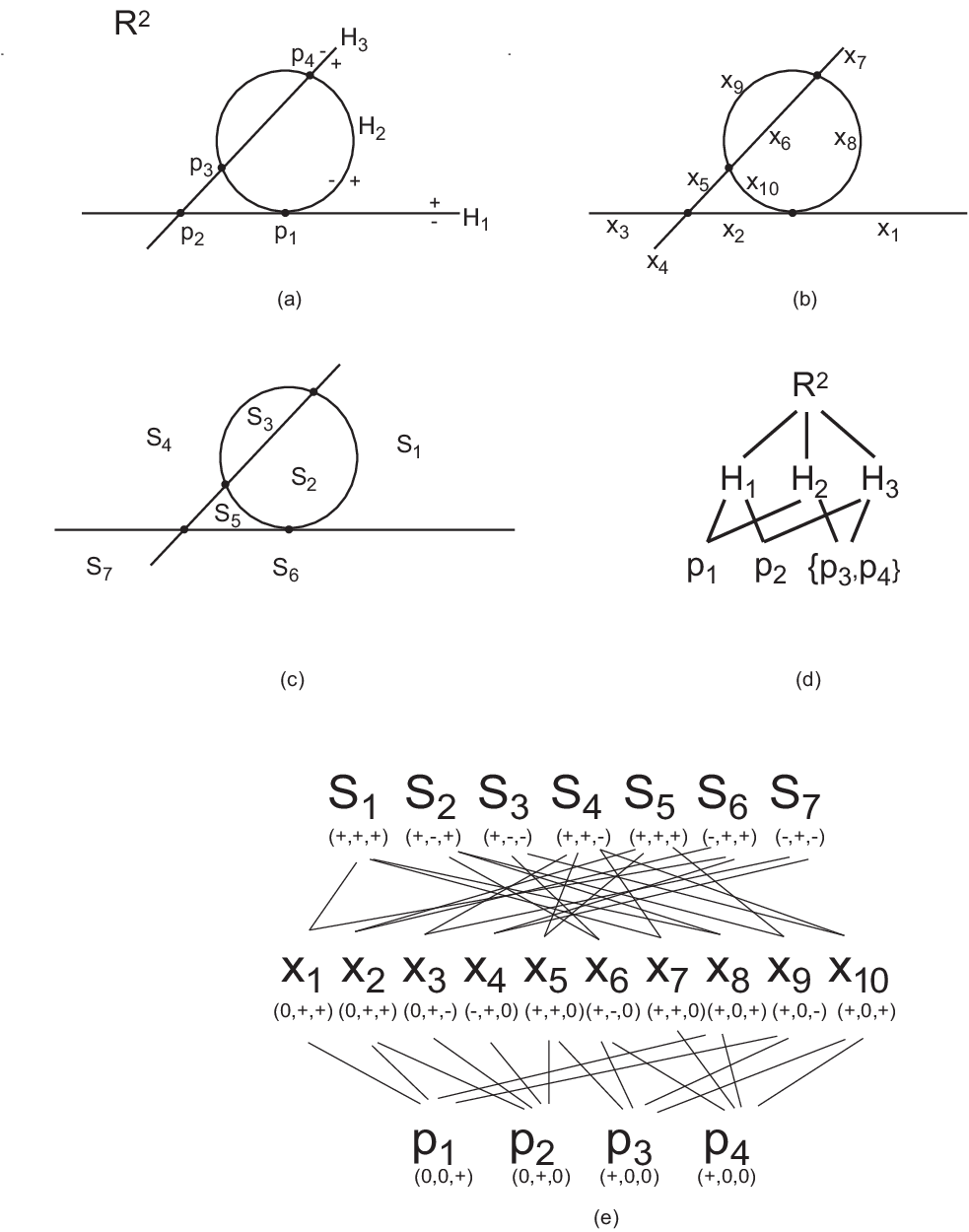}}
\caption{An example of a CL arrangement, whose components with the corresponding signs and the 0-dimensional faces are presented in part (a), whose 1-dimensional faces are presented in part (b) and whose 2-dimensional faces are presented in part (c). The corresponding intersection semi-lattice $L(\A)$ is depicted in part (d) and the poset of faces $\LL(\A)$ together with their corresponding vectors of signs under the function $i$, i.e. the LRB $\LL_0(\A)$, is presented in part (e).}\label{exampleDef}
\end{figure}

Figure \ref{exampleDef}(a) presents the components of the intersection semi-lattice $L$: $\RR^2$, the lines and the conic $\{H_1,H_2,H_3\}$ and the intersections of the components: $H_1 \cap H_2=\{ p_1 \}$, $H_1 \cap H_3=\{ p_2 \}$,
$H_2 \cap H_3=\{ p_3,p_4 \}$. Figure \ref{exampleDef}(b) presents the  $1$-dimensional elements of the poset of faces:  the open 1-dimensional sections $\{x_1,\dots,x_{10}\}$. Figure \ref{exampleDef}(c) presents the 2-dimensional elements of the poset of faces $\{S_1,\dots,S_7\}$. Figure \ref{exampleDef}(d) presents the intersection semi-lattice $L=L(\A)$ itself, while
Figure \ref{exampleDef}(e) presents the poset of faces $\LL(\A)$, together with their corresponding vectors of signs (according to the signs chosen in Figure \ref{exampleDef}(a)), i.e. their corresponding elements in $\LL_0(\A)$. By Proposition \ref{prsL0semigrp}, $\LL_0(\A)$ is a semigroup, hence an LRB too. Note that in this case the function $i : \LL(\A) \to \LL_0(\A)$ is not monomorphic, since $i(p_3) = i(p_4)$, $i(x_1) = i(x_2)$, $i(x_5) = i(x_7)$, $i(x_8) = i(x_{10})$ and $i(S_1) = i(S_5)$.
}
\end{example}

\subsection{Redefining the product}\label{subsec_product}

In line arrangements, we have that $\LL(\A) \simeq {\rm Image} (i)$ and hence $\LL(\A)$ has a natural product induced by
$(L^1_2)^n$. In the case of CL arrangements, we cannot identify $\LL(\A)$ with Image$(i)$ (as $i$ is not necessarily monomorphic as we saw in the previous example), and thus we have to redefine the product on $\LL(\A)$.
In this section, we introduce two different generalizations for the product defined for hyperplane arrangements to the case of real CL arrangements: one product turns $(\LL(\A), \cdot)$ into an alternative LRB (i.e. an alternative magma satisfying $x^2=x$ and $x\cdot y\cdot x=x\cdot y$ for every $x,y \in \LL = \LL(\A)$) and the second product turns  $(\LL(\A), \cdot)$  into an aperiodic semigroup.

\medskip

We want to use the same geometric intuition of the  product for
hyperplane arrangements (see Section \ref{subsecLRB_Hyp}) for defining the corresponding product on the face poset $(\LL,\preceq)$ for real CL arrangements (where $\preceq$ is the partial order defined by inclusion in the closure of the larger face).
Explicitly, we would like to maintain the following properties for every $x,y,z \in \LL$:
\begin{enumerate}
\item For every $x,y \in \LL\, , x^2=x$ and $x\cdot y\cdot x = x\cdot y$ (the LRB properties).
\item If $x \cdot y = z$, then $i(x)i(y) = i(z)$ (if there  exists a face with a  vector of signs $i(x)i(y)$). Explicitly,
if $\LL(\A)$ and $\LL_0(\A)$ are semigroups, then the surjective map $i: \LL(\A) \to \LL_0(\A)$ will be a homomorphism.
\item If $x \cdot y = z$, then $x \preceq z$.
\item If $x \preceq y$, then $x\cdot y = y$.
\item  $(x\cdot y)\cdot z = x\cdot (y\cdot z)$ (associativity).
\end{enumerate}

We present two different definitions for this product in the case of real CL arrangements. The first definition, appearing in Section \ref{subsecGeoProd}, preserves properties (1), (3) and (4) and thus will be more geometric, inducing a structure of an alternative LRB on $\LL$; the second, appearing in Section \ref{subsecAssocProd},  preserves properties (2), (5) and a weaker version of property (1), and thus will be more algebraic, inducing a structure of an aperiodic semigroup on $\LL$.

\begin{remark}
{\rm Based on the first definition (and when the defined product is associative), one can associate a quiver to the semigroup algebra $k \LL$, as was already done in the case of hyperplane arrangements (see \cite{Sal1}). We plan to check its properties in the future.}
\end{remark}

\subsubsection{The geometric product} \label{subsecGeoProd}
We start with examining   the CL arrangement presented in Figure \ref{prodNotGeo}, which shows that property (3) is not entirely based on the definition of $i$. Explicitly, we want that if  $x\cdot y = z$, then $x \preceq z$, i.e. $z$ is a face intersecting any neighbourhood of $x$. The example in Figure \ref{prodNotGeo} shows that this is not always the case when using the product induced by $(L^1_2)^n$. In the CL arrangement presented in Figure \ref{prodNotGeo}, $i(p)i(x)= (0,0,0)\cdot(0,-,0) = (0,-,0) =  i(x)$, but $p \not\preceq x$.

\begin{figure}[h]
\epsfysize=4cm \centerline{\epsfbox{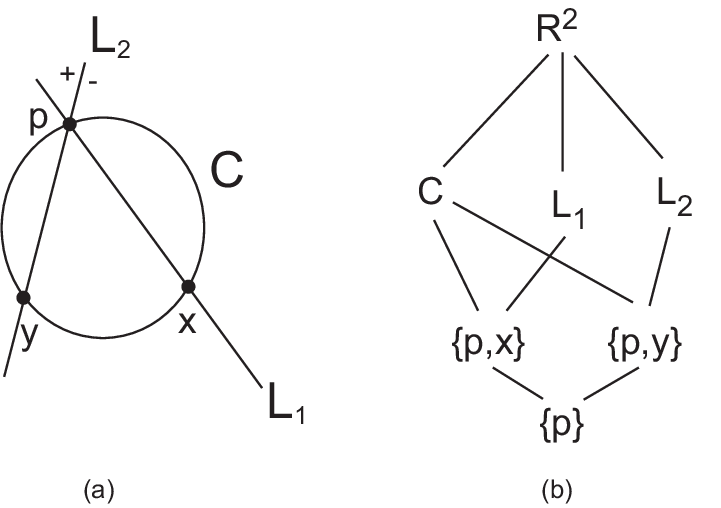}}
\caption{As $i(p) = (0,0,0), i(x) = (0,-,0)$, we have that  $i(p)i(x) = i(x)$, but $p \not\preceq x$. The signs of the line $L_2$ are also depicted (only its signs are relevant in this example). Part (b) depicts the intersection poset of the CL arrangement.}\label{prodNotGeo}
\end{figure}

We start by defining a set of faces $F(P_1,P_2)$ which plays a crucial role in the geometric definition:
\begin{definition}
Let $\A$ be a real CL arrangement, and let $P_1,P_2 \in \LL(\A)$.  Define:
$$F(P_1,P_2) \doteq \{ P \in \LL(\A) : i(P) = i(P_1)  i(P_2) {\rm{\,\, and\,\, }} P_1 \preceq P \}.$$
\end{definition}

The motivation for defining the set $F(P_1,P_2)$ is geometric: for defining the product of two faces (see Definition \ref{defPart2} below), we look for a face $P$ in the {\it neighbourhood} of $P_1$ (i.e. satisfies the condition $P_1 \preceq P$), whose expected vector of signs is induced by the corresponding product of the vectors of signs of $P_1$ and $P_2$ in $(L_2^1)^n$.

\medskip

Let us give two examples for computing the set $F(\cdot,\cdot)$, leaving the explicit computations to the reader.
\begin{example} \emph{(1) In the CL arrangement presented in Figure \ref{exmplDefF},
we have that:
$$F(p_1,p_2) = \{A_1,A_2\} \mbox{ and } F(p_2,p_3) = \{A_3\}.$$
\begin{figure}[h]
\epsfysize=3.5cm \centerline{\epsfbox{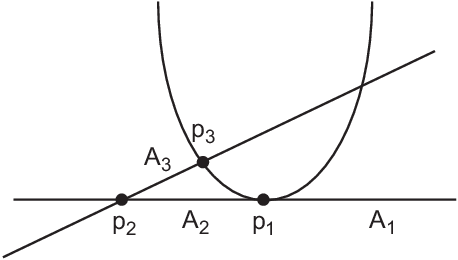}}
\caption{An example of a CL arrangement for computing the set $F(\cdot,\cdot)$. Note that the faces $A_1,A_2$ and $A_3$ have dimension $1$.} \label{exmplDefF}
\end{figure}
(2) Looking at Figure \ref{prodNotGeo} above, we have that $F(p,x)=\emptyset$.}
\end{example}

The following proposition summarizes  some properties of the set $F(P_1,P_2)$:
\begin{prs}\label{rem1Def} Let $P_1,P_2 \in \LL = \LL(\A)$. Then: \\
(1) If $P_1 \neq P_2$, then $P_1$ and $P_2$ cannot be together in the set $F(P_1,P_2)$. \\
(2)
\begin{itemize}
\vspace{-15pt}\item[(a)] If $\dim(P_1)=1$ or $\dim(P_1)=2$, then $|F(P_1,P_2)| = 1$ for all $P_2 \in \LL$.
\item[(b)] If $\dim(P_1)=0$, then $|F(P_1,P_2)| \leq 2$ for all $P_2 \in \LL$.
\end{itemize}
\end{prs}

\begin{proof}
(1) Assume on the contrary that  $P_1,P_2 \in F(P_1,P_2)$, then $i(P_1) = i(P_2)$. Moreover, since $P_2 \in F(P_1,P_2)$, we have $P_1 \preceq P_2$ and this implies that either $P_1$ is contained in the boundary of $P_2$ by Remark \ref{rem-comb}(1), but then $i(P_1) \neq i(P_2)$, or that  $P_1$ and $P_2$ have the same dimension, which means that $P_1 = P_2$, which is a contradiction.

\medskip

\noindent
(2)(a) If $\dim(P_1) \in \{1,2\}$, then we claim that $|F(P_1,P_2)|=1$: if $\dim(P_1)=2$, then $P_1$ is a chamber. Since for any chamber $P_1$,  $i(P_1)i(P_2)=i(P_1)$ and there is a unique chamber (i.e. $P_1$) in the neighbourhood of a generic point of the chamber $P_1$ with the same vector of signs as the vector of $P_1$, hence
$|F(P_1,P_2)|=|\{ P_1 \}|=1$.

If $\dim(P_1)=1$, i.e. $P_1$ is a section of a component of $\A$, then the possible faces in $F(P_1,P_2)$ can be $P_1$ or one of the two chambers having $P_1$ in their boundary, but each of these three faces has a different vector of signs, and thus $|F(P_1,P_2)|=1$.

\medskip

\noindent
(2)(b) If $\dim(P_1)=0$, then $P_1$ is a point. We start by proving the claim for the case where there are exactly two components of $\A$ passing through the point $P_1$. In this case, we have to deal with only two cases: \\
(i)  Through the point $P_1$ pass two transversal components (lines or conics) or  two conics which intersect each other with intersection multiplicity $3$.\\
(ii) The intersection multiplicity of the two components at the point $P_1$ is either $2$ or $4$ (i.e. the two components are tangent to each other with multiplicity $2$ or $4$).

\medskip

For case (i), we may consider the local neighbourhood of $P_1$ as represented by the CL arrangement $\{xy=0\}$ or by the CL arrangement $\{y(y-x^3)=0\}$ (note that this {\em does not mean} that the curve itself has degree $3$). Then, we have: $\LL_0(\{xy=0\}) = \LL_0(\{y(y-x^3)=0\}) = (L^1_2)^2$ (note that by the definition of $F(P_1,P_2)$, a face $P \in F(P_1,P_2)$ always intersects any neighbourhood of $P_1$).
By looking at this neighbourhood,  it is easy to see that if $P_a$ and $P_b$ are two different faces, which are both not equal to $P_1$ and satisfy: $P_1 \preceq P_a$ and $P_1 \preceq P_b$, then
$i(P_a) \neq i(P_b)$, which means that  $|F(P_1,P_2)|=1$ for any face $P_2 \in \LL$.

For case (ii), we may consider the local neighbourhood of $P_1$ as represented by the CL arrangement $\{y(y-x^2)=0\}$ or by the CL arrangement $\{(y+x^2)(y-x^2)=0\}$ (note that this {\em does not mean} that the curve itself has degree $2$ or $4$). Then, we have:
$$\LL_0(\{y(y-x^2)=0\}) = \LL_0(\{(y+x^2)(y-x^2)=0\})=(L_2^1)^2 - \{(-,+),(-,0),(0,+)\},$$
where the first coordinate corresponds to the line $\{y=0\}$ or to the curve $\{y+x^2=0\}$ and the second coordinate corresponds to the curve $\{y-x^2=0\}$. In this case, there may be two different faces $P_a$ and $P_b$  both not equal to $P_1$, such that $P_1 \preceq P_a$, $P_1 \preceq P_b$ and $i(P_a) = i(P_b)$ (such as the two sections on the line on which the point $P_1$ is lying on, see e.g. sections $A_1$ and $A_2$ in Figure \ref{exmplDefF}). However, there cannot be a triple of faces having this property, as can be checked directly. Hence, $|F(P_1,P_2)| \leq 2$ for any face $P_2 \in \LL$.

\medskip

Now we pass to the general case: we have to show that adding more lines or conics which pass through $P_1$ will not increase $|F(P_1,P_2)|$. As we have shown above, the fact $|F(P_1,P_2)| = 2$ implies that $P_1$ is a point, and the two components passing through $P_1$ are tangent to each other (with multiplicity $2$ or $4$). Now, there are two different ways for adding a new component through this point: either tangent to the existing two tangent components or transversal to both of them. In the case of adding a transversal component, we have, similar to case (i) above, that $|F(P_1,P_2)|=1$. In the case of adding a tangent component (see an example in Figure \ref{NewChambers}), the original two adjacent faces with the same vector of signs (e.g. faces $Q_1,Q_2$ in Figure \ref{NewChambers}(a)) will be now split into four adjacent faces (e.g. faces $Q_{1,1},Q_{1,2},Q_{2,1},Q_{2,2}$ in Figure \ref{NewChambers}(b)), but only two of them will have the same vector of signs, because the other two faces will be on the other side of the new tangent component $H$, and hence their vector of signs will have a different sign with respect to the new component $H$ (e.g. faces $Q_{1,1},Q_{2,1}$ in Figure \ref{NewChambers}(b) have the same vector of signs and faces $Q_{1,2},Q_{2,2}$ have a {\it different} common vector of signs). So we have at most two faces with the same vector of signs.

\begin{figure}[h]
\epsfysize=3cm \centerline{\epsfbox{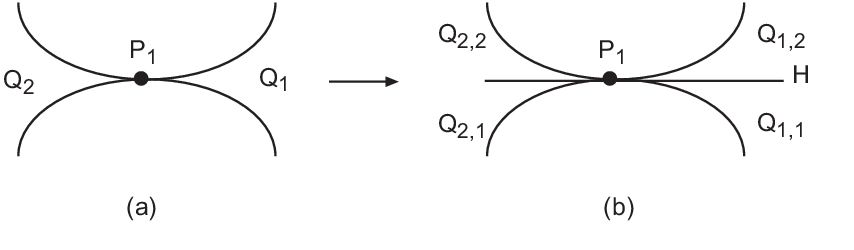}}
\caption{An example for illustrating the proof of Proposition \ref{rem1Def}(2)(b) for the general case of adding a tangent component passing through the tangent point $P_1$.}\label{NewChambers}
\end{figure}

\end{proof}

\begin{corollary} \label{cor_lineArr}
(1) If $\A$  is a line arrangement, then for any two faces $P_1,P_2 \in \LL$, $|F(P_1,P_2)| = 1$.\\
(2) If all the intersection points in a CL arrangement $\A$ are transversal, then $|F(P_1,P_2)| \leq  1$.
\end{corollary}

\begin{proof}
(1) First, note that for line arrangements, if $i(P) = i(P_1)i(P_2)$, then immediately $P_1 \preceq P$. Second, if there were two faces $P,Q$ such that
$i(Q) = i(P) = i(P_1)i(P_2)$, then $i$ would not be monomorphic, which is a contradiction to the situation in line arrangements. Moreover, by the geometric interpretation of the product on the set of faces on line arrangements, there always exists a unique face $P \in \LL$ satisfying $i(P) = i(P_1)i(P_2)$.

\medskip

\noindent
(2) By the proof of Proposition \ref{rem1Def}(2)(b), we have that the fact $|F(P_1,P_2)|=2$ implies that $P_1$ is a tangency point. Since all the intersection points in $\A$ are transversal, we immediately have that $|F(P_1,P_2)| \leq  1$.
\end{proof}

We are now ready to define the geometric product. {\em Informally,} the product of the faces $P_1$ and $P_2$ will be the element of $F(P_1,P_2)$ (which intersects any neighbourhood of the face $P_1$) that is ``closest'' to the face $P_2$. If there are two such faces and none of them is $P_2$, then take the first one in the clockwise direction; and if no such face exists, then take the face $P_1$.

Note that we cannot guarantee that this product will be associative (see Example \ref{exampNonAssoc} below for some examples of  real CL arrangements inducing a non-associative product).

Note also that in the following definition, one has to check the cases {\it sequentially case by case}.

\begin{definition} (Geometric product on $\LL(\A)$) \label{defPart2}\\
Let $\A$ be a real CL arrangement and let $P_1,P_2 \in \LL(\A)$.

If $|F(P_1,P_2)|=0$, then define $P_1\cdot P_2 \doteq P_1$.

\smallskip

If $|F(P_1,P_2)|=1$, i.e. $F(P_1,P_2) = \{P\}$, then define  $P_1\cdot P_2\doteq P$.

Otherwise, we know that  $|F(P_1,P_2)|=2$.

\medskip

If $P_2 \in F(P_1,P_2)$, then define $P_1~\cdot~P_2~\doteq~P_2$.

\smallskip

Otherwise, we have that $|F(P_1,P_2)|=2$ and $P_2 \not \in F(P_1,P_2)$. By Proposition \ref{rem1Def}(2)(b), this can only happen when $P_1$ is a point, and all the components of $\A$ passing through $P_1$ are tangent to each other (at $P_1$), with multiplicity $2$ or $4$.

If $P_1$ and $P_2$ are on the same \emph{unbounded} $1$--dimensional component $H$, then $P_1\cdot P_2$ is defined to be the face in $F(P_1,P_2)$ we get after moving from $P_1$ on $H$ in the direction of $P_2$ (see Figure \ref{cases}(a)~\footnote{\ Note that the last sentence has no meaning if $P_1$ and $P_2$ are located on {\em two} $1$--dimensional unbounded components (e.g. two parabolas or a line and a parabola), but this case is impossible, since then through $P_1$ pass at least two components which intersect transversally (by the geometric properties of CL arrangements) and hence we have $|F(P_1,P_2)| \leq 1$, by the proof of Proposition \ref{rem1Def}(2)(b).}).

Otherwise, either all the elements in $F(P_1,P_2)$ are chambers or  that $P_1$ and $P_2$ are on the same \emph{bounded} $1$--dimensional component (i.e. an ellipse).
For each $P \in F(P_1,P_2)$, let $\ell_P$ be the minimal length of an arc passing through the point $P_1$, a generic point in $P$ and a generic point in $P_2$. If the minimum of the set $\{\ell_P\}_{P \in F(P_1,P_2)}$ is attained only once, say, at a face $P_0$, then define $P_1\cdot P_2 \doteq P_0$ (see Figure \ref{cases}(b)).
However, if there exist two faces $P',P'' \in F(P_1,P_2)$  satisfying:
$$\inf_{P \in F(P_1,P_2)} \{\ell_P\} = \ell_{P'} = \ell_{P''},$$
then draw a circle $C$ through $P_1$, a generic point in $P'$ (or in $P''$) and a generic point in $P_2$ and define $P_1\cdot P_2 \doteq P$, where $P \in \{P',P''\}$ is the face we are in after moving slightly \emph{clockwise} on the circle $C$ from $P_1$ (see Figure \ref{cases}(c)).

\begin{figure}[h]
\epsfysize=4.5cm \centerline{\epsfbox{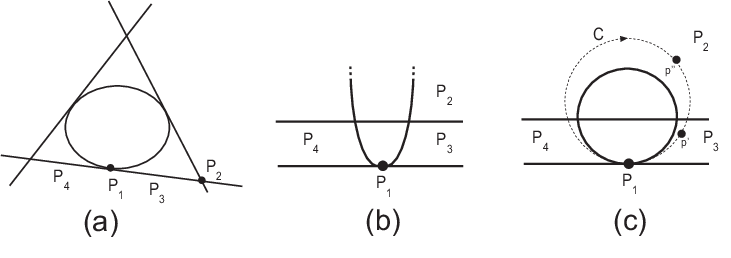}}
\caption{\small
Different situations for the geometric product on $\LL(\A)$:
In part (a),\break $P_2 \not\in F(P_1,P_2) = \{P_3,P_4\}$ and $P_1$ and $P_2$ are on the same \emph{unbounded} $1$--dimensional component, so $P_1\cdot P_2 = P_3$.
In part (b), $P_2 \not\in F(P_1,P_2) = \{P_3,P_4\}$. Moreover, $\ell_{P_3} < \ell_{P_4}$, so we have: $P_1\cdot P_2 = P_3$.
In part (c), again  $P_2 \not\in F(P_1,P_2) = \{P_3,P_4\}$, but in this case  $\ell_{P_3} = \ell_{P_4}$, so we draw a dotted circle $C$ through $P_1$, a generic point $P'$ in $P_3$ and a generic point $P''$ in $P_2$, and move \emph{clockwise} on it from $P_1$ to get: $P_1\cdot P_2 = P_4$.} \label{cases}
\end{figure}
\end{definition}

Obviously, by the definition, requirement (3) (i.e. if $x \cdot y = z$, then $x \preceq z$) holds. Note that in  the example presented in Figure \ref{prodNotGeo}, when we use this product, then $p \cdot x = p$, since $F(p,x) = \emptyset$.

Moreover, if  $x,y,z \in \LL(\A)$, then $x\cdot(y\cdot z)$ is a face $\alpha \in \LL(\A)$ satisfying $x \preceq \alpha$, $x \cdot y$ is a face $\beta' \in \LL(\A)$ satisfying $x \preceq \beta'$ and
$(x \cdot y)\cdot z$ is a face $\beta \in \LL(\A)$ satisfying $\beta' \preceq \beta$; thus $x \preceq \beta$. This implies that even if the product is not associative, then:
$$x \subseteq  \overline{x\cdot(y\cdot z)} \cap \overline{(x\cdot y)\cdot z}.$$

\medskip

The following proposition presents some properties of the geometric product:

\begin{prs} \label{remDef} Let $P_1,P_2 \in \LL = \LL(\A)$. Then the following properties hold: \\
(1) If $P_1 \in F(P_1,P_2)$, then $P_1 \cdot P_2 = P_1$.\\
(2) If $P_1 \preceq P_2$, then $P_1 \cdot P_2 = P_2$ (requirement (4)).\\
(3) If $|F(P_1,P_2)|=1$ for every two faces $P_1,P_2 \in \LL$, then the product is associative.
\end{prs}

\begin{proof}
(1) The proof depends on the dimension of the face $P_1$: if dim$(P_1)= 2$, it is obvious, as already $i(P_1)i(P_2) = i(P_1)$ and $P_1$ is the only face $X$ satisfying $P_1 \preceq X$.
If dim$(P_1)< 2$, then in the neighbourhood of $P_1$, the only face with the same vector of signs as $P_1$ is $P_1$ (note that if $P_1 \in F(P_1,P_2)$, then by definition $i(P_1)i(P_2) = i(P_1)$).

\medskip

\noindent
(2) If $P_1=P_2$, then $P_1 \cdot P_2 = P_1 \cdot P_1 = P_1^2 = P_1 = P_2$ (note that $P_1 \in F(P_1,P_1)$ and by part (1) above, $P_1^2 = P_1$).
Otherwise, since $P_1 \preceq P_2$ and $P_1 \neq P_2$, then  $i(P_1)i(P_2) = i(P_2)$ (by Lemma \ref{Lemma2.9}) and therefore $P_2 \in F(P_1,P_2)$. By definition, $P_1 \cdot P_2 = P_2$.

\medskip

\noindent
(3) Let $x,y,z \in \LL$. We know that any neighbourhood of $x$ intersects both $w \doteq (x \cdot y) \cdot z$ and $v \doteq x \cdot (y \cdot z)$ (by the property $x \subseteq  \overline{x\cdot(y\cdot z)} \cap \overline{(x\cdot y)\cdot z}$ mentioned above), and
$w$ and $v$ have the same vector of signs. Indeed, note that since $|F(p,q)|=1$ for every $p,q \in \LL$, then $i(p\cdot q) = i(p)i(q)$, i.e. the function $i$ is a homomorphism and thus:
$$i(w) = i(x\cdot y)i(z) = (i(x)i(y))i(z) = i(x)(i(y)i(z)) =i(x)i(y \cdot z) =i(v).$$

The continuation of the proof depends on the dimension of $x$. If dim$(x) > 0$, then a neighbourhood of $x$ can intersect only one face with a given vector of signs (see Proposition \ref{rem1Def}(2)(a)),
which implies that $v=w$.

If dim$(x)=0$,  a neighbourhood of $x$ may intersect two different faces with the same vector of signs (see Proposition \ref{rem1Def}(2)(b)).
That is, $x$ is in the boundary of $w$ and $v$, and thus $F(x,w) = \{w,v\}$ (as $x \preceq w$, $x \preceq v$ and $i(x)i(w) = i(w) = i(v)$; the first equality is derived from requirement (4), which holds by the definition of $F(\cdot,\cdot)$ and that $P_1~\cdot~P_2~=~P_2$ in the case that $P_2 \in F(P_1,P_2)$). Hence, $|F(x,w)| = 2$, which is a contradiction. This means that $v=w$.
\end{proof}

\begin{remark}
{\rm Proposition \ref{remDef}(3) {\em does not mean} that if all the intersection points are transversal, then the induced product is associative, see Example \ref{exampNonAssoc}(1) below. Indeed, in line arrangements, all the intersection points are transversal and $|F(P,Q)|=1$ for every two faces $P,Q \in \LL$, but the problems arise when there exist two faces $P,Q \in \LL$ satisfying $|F(P,Q)| \neq 1$.}
\end{remark}

Note that if $\A$ is a line arrangement, then $|F(P,Q)|=1$ for every two faces $P,Q \in \LL$. This means that the geometric product introduced in Definition \ref{defPart2} indeed generalizes the original product defined in the case of line arrangements.

Since requirement (4) (i.e. if $x \preceq y$, then $x\cdot y =y$) holds, we can now prove requirement (1); i.e.  that $\LL$ is an \emph{alternative LRB}:

\begin{prs}
Assume that $\cdot$ is the product on $\LL$ introduced in Definition \ref{defPart2}.
%and that if $x \subseteq \bar y$ then $x \cdot y = y$.
Let $x,y \in \LL$. Then $(\LL,\cdot)$ is an \emph{alternative LRB}, i.e.:
\begin{enumerate}
\item $x^2=x$,
\item $x \cdot (x \cdot y) = (x \cdot x) \cdot y$ \emph{and} $x \cdot (y \cdot y) = (x \cdot y) \cdot y$,
\item $(x \cdot y) \cdot x = x \cdot (y \cdot x) = x \cdot y$.
\end{enumerate}
\end{prs}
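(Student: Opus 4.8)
The plan is to verify the three alternative-LRB identities directly from Definition \ref{defPart1} and the properties already established in Proposition \ref{remDef}. First I would dispose of identity (1), $x^2 = x$: applying the definition to the pair $(P_1,P_2) = (x,x)$, we have $i(x)i(x) = i(x)$ (since $L_2^1$ is idempotent) and $x \preceq x$, so $x \in F(x,x)$; by Proposition \ref{remDef}(2), $x \cdot x = x$.

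Next I would handle identity (3), $x\cdot y \cdot x = x \cdot y$. Write $z \doteq x\cdot y$. By requirement (3) (which holds unconditionally, as noted right after Definition \ref{defPart1}), $x \preceq z$. If $x = z$, then $z \cdot x = z \cdot z = z$ by identity (1) and we are done; otherwise $x \prec z$, and then Proposition \ref{remDef}(4) gives $z \cdot x = x \cdot y \cdot x = z = x\cdot y$, as required. (I should double-check the order of the product here: $x\cdot y\cdot x$ means $(x\cdot y)\cdot x$, so this is exactly $z \cdot x$ with $x \preceq z$, and Proposition \ref{remDef}(4) applies.)

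The main work is identity (2), the two "alternative" (flexibility-type) laws. For $x\cdot(x\cdot y) = (x\cdot x)\cdot y$: the right side is $x\cdot y$ by identity (1). For the left side, set $z \doteq x \cdot y$; by requirement (3), $x \preceq z$. If $x = z$ then $x\cdot(x\cdot y) = x\cdot x = x = z = x\cdot y$; if $x \prec z$, then by Proposition \ref{remDef}(4), $x\cdot z = z = x\cdot y$. Either way the left side equals $x\cdot y$, matching the right side. For the second law $x\cdot(y\cdot y) = (x\cdot y)\cdot y$: the left side is $x \cdot y$ by identity (1) applied to $y\cdot y = y$. For the right side, set $z \doteq x\cdot y$; I must show $z \cdot y = z$. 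Here I expect the subtlety: I would argue that $z = x \cdot y$ is either $x$ (if $F(x,y) = \emptyset$), or lies in $F(x,y)$, so in all cases $i(z) = i(x)i(y)$ whenever $F(x,y)\neq\emptyset$, and then $i(z)i(y) = i(x)i(y)i(y) = i(x)i(y) = i(z)$, so $z \in F(z,y)$ — hence by Proposition \ref{remDef}(2), $z\cdot y = z$. The remaining case is $z = x$ with $F(x,y) = \emptyset$; then one checks from the definition of $F$ that $F(x,y) = \emptyset$ forces $F(x,y\cdot y) = F(x,y) = \emptyset$ as well (since $y\cdot y = y$), so $x\cdot(y\cdot y) = x = (x\cdot y)\cdot y$.

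The step I expect to be the genuine obstacle is confirming $z \cdot y = z$ in the second law when $z \in F(x,y)$ but the set $F(z,y)$ has size $2$: in that degenerate point-case one must check that the definition still selects $z$ and not the other candidate face. I would resolve this by noting that $z \in F(z,y)$ (shown above via the sign computation and $z \preceq z$), so Proposition \ref{remDef}(2) applies regardless of $|F(z,y)|$, giving $z \cdot y = z$ as needed. This is exactly the point where Proposition \ref{remDef}(2) — that membership of the first factor in $F$ forces the product to be that factor — does the heavy lifting, so I would make sure that proposition is invoked cleanly rather than re-deriving it.
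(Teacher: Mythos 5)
Your treatment of identities (1) and (2) is correct and follows essentially the paper's own route: (1) via $x \in F(x,x)$ and Proposition \ref{remDef}(2); the first law of (2) via $x \preceq x\cdot y$ and Proposition \ref{remDef}(4) (with the case $x = x\cdot y$ handled by idempotency); the second law of (2) via the sign computation $i(z)i(y)=i(x)i(y)i(y)=i(x)i(y)=i(z)$ showing $z=x\cdot y\in F(z,y)$, together with the degenerate case $F(x,y)=\emptyset$. Your closing observation that Proposition \ref{remDef}(2) applies irrespective of $|F(z,y)|$ is exactly the right way to dispose of the tangency case.

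However, your argument for identity (3) contains a genuine error. You set $z=x\cdot y$, note $x\preceq z$, and invoke Proposition \ref{remDef}(4) to conclude $z\cdot x=z$. That proposition says: if $P_1\preceq P_2$ and $P_1\neq P_2$, then $P_1\cdot P_2=P_2$; it computes the product when the \emph{left} factor lies below the right one. In $z\cdot x$ the inclusion runs the other way ($x\preceq z$ with $x$ the \emph{right} factor), so the hypothesis is not met, and even formally the conclusion would read $z\cdot x=x$, not $z\cdot x=z$. The claim you want is true, but it requires the argument the paper actually uses: when $F(x,y)\neq\emptyset$ one has $i(z)=i(x)i(y)$, hence $i(z)i(x)=i(x)i(y)i(x)=i(x)i(y)=i(z)$ by the LRB identity in $(L_2^1)^n$, so $z\in F(z,x)$ and Proposition \ref{remDef}(2) gives $z\cdot x=z$; when $F(x,y)=\emptyset$, $z=x$ and $z\cdot x=x\cdot x=x=z$. (Equivalently, $x\preceq z$ forces every zero coordinate of $i(z)$ to be a zero coordinate of $i(x)$, whence $i(z)i(x)=i(z)$ directly --- but that is a fact about sign vectors, not an instance of Proposition \ref{remDef}(4).) A smaller point: since the product is not associative in general, the paper first derives the flexible identity $(x\cdot y)\cdot x=x\cdot(y\cdot x)$ from alternativity so that the expression $x\cdot y\cdot x$ is unambiguous; you silently read it as $(x\cdot y)\cdot x$, which is harmless but should be stated.
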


\begin{proof}
(1) As $x \in F(x,x)$, we get that $x^2=x$ (by Proposition \ref{remDef}(1)).

\medskip

\noindent
(2) We prove that $x \cdot (x \cdot y) = (x \cdot x) \cdot y = x \cdot y$ and $x \cdot y = x \cdot (y \cdot y) = (x \cdot y) \cdot y$.
If we denote $z = x \cdot y$, then by definition $x \preceq z$. Thus $x \cdot (x \cdot y) = x\cdot z = z = x \cdot y$ (the second equality is by Proposition \ref{remDef}(2)).

Now, we will show that $z \cdot y = z$. Its proof depends on the cardinality of the set $F(x,y)$. If $|F(x,y)|>0$, then $i(z) = i(x)i(y)$ and thus
$i(z)= i(x)i(y) = i(x)i(y)i(y) = i(z)i(y)$ (since this holds in $(L_2^1)^n$) and therefore $z \in F(z,y)$; thus $z \cdot y = z$ (by Proposition \ref{remDef}(1)). Otherwise, $|F(x,y)|=0$ and thus $x \cdot y = x$, i.e. $z=x$. Thus $z\cdot y = x \cdot y = x = z$; i.e. $z \cdot y = z$.

Therefore, in both cases, $(x \cdot y) \cdot y = x \cdot y = x \cdot (y \cdot y)$ and hence $(\LL,\cdot)$ is an alternative magma.

\medskip

\noindent
(3) By considering the algebra $\mathbb{R} \LL$, one obtains that $\mathbb{R} \LL$ is an alternative algebra, and by \cite[pp. 27--28]{Sch}, in these algebras the {\em flexible identity} $(x \cdot y) \cdot x = x \cdot (y \cdot x)$ holds. Hence, it holds also for any two faces $x,y$ in $\LL$, as this identity does not depend on the embedding algebra. Therefore, the expression $x \cdot y \cdot x$ is well defined.

As before, the proof depends on the cardinality of the set $F(x,y)$. If $|F(x,y)|>0$, then $i(x \cdot y) = i(x)i(y)$
and hence $i(x \cdot y) = i(x)i(y) = i(x)i(y)i(x) = i(x \cdot y)i(x)$ and therefore we have that $x \cdot y \in F(x \cdot y,x)$ and thus by Proposition \ref{remDef}(1), $x \cdot y \cdot x = x \cdot y$. Otherwise, $|F(x,y)|=0$ and thus $x \cdot y = x$ and so
$x \cdot y \cdot x =  x \cdot x = x = x \cdot y$.
\end{proof}

The next example shows that the geometric product is not always associative.

\begin{example} [Non-associative geometric products] \label{exampNonAssoc} \ \\
{\rm In the following examples, we show that the geometric product may  not be associative if there exist two faces $P_1,P_2 \in \LL$ satisfying $|F(P_1,P_2)| \neq 1$. The first example shows the non-associativity where there exist two faces $P_1,P_2 \in \LL$ satisfying $|F(P_1,P_2)| = 0$, and the second example shows the non-associativity where there exist two faces $P_1,P_2 \in \LL$ satisfying $|F(P_1,P_2)| = 2$.

\medskip

\noindent
(1) Given the real CL arrangement $\A_1$ presented in  Figure \ref{exampNonAssocProd1}, note that $|F(P_1,P_2)| \leq 1$ for any two faces $P_1,P_2 \in \LL$ since all the intersection points are transversal. Note also that $$|F(p_1,p_2)|=|F(p_1,p_3)|=0$$ (where $p_1,p_2,p_3$ are the intersection points and $s$ is the shorter arc between $p_2$ and $p_3$).

\begin{figure}[!ht]
\epsfysize 3.5cm
\centerline{\epsfbox{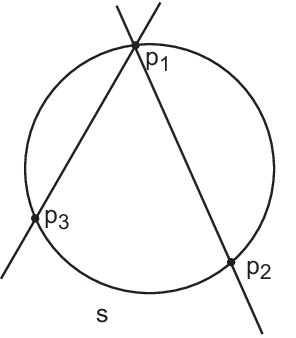}} % to be changed
\caption{An example of a non-associative geometric product in the case of existence of two faces $P_1,P_2 \in \LL$ satisfying $|F(P_1,P_2)| = 0$: $$s = (p_2 \cdot p_1) \cdot p_3 \neq p_2 \cdot (p_1 \cdot p_3) = p_2.$$}\label{exampNonAssocProd1}
\end{figure}

We have: $p_2 \cdot p_1 = p_2$, and
thus  $(p_2 \cdot p_1) \cdot p_3 = p_2 \cdot p_3 = s$. On the other hand, since $|F(p_1,p_3)| = 0$ we have: $p_1 \cdot p_3 = p_1$, and then:
$$p_2 \cdot (p_1 \cdot p_3) = p_2 \cdot p_1 = p_2 \neq s = (p_2 \cdot p_1) \cdot p_3.$$
Thus the geometric product is not associative for the CL arrangement $\A_1$. However, note that $\LL_0(\A_1)$ is an associative LRB, by a direct check.

\medskip

\noindent
(2) Look at the real CL arrangement $\A_2$ presented in  Figure \ref{exampNonAssocProd}, where the circle in $\A_2$ is denoted by $C$.
All the labeled faces are on the circle, where $x$ and $y$ are tangent points  and $b,w,m$ and $z$ are 1-dimensional faces. Let us compute $(x \cdot y) \cdot z$ and $x \cdot (y \cdot z)$.

\begin{figure}[!ht]
\epsfysize 3.5cm
\centerline{\epsfbox{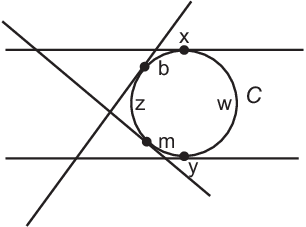}}
\caption{An example of a non-associative geometric product in the case of existence of two faces $P_1,P_2 \in \LL$ satisfying $|F(P_1,P_2)| = 2$: $$b = x \cdot (y \cdot z) \neq (x \cdot y) \cdot z = w.$$}\label{exampNonAssocProd}
\end{figure}

Note that $F(x,y) = \{b,w\}$ (so we have $|F(x,y)| =2$) and  $\ell_{b} = \ell_{w}$. Thus, we should go clockwise on the circle $C$ from $x$ to $y$ and therefore $x \cdot y = w$ and so: $(x \cdot y) \cdot z = w \cdot z = w$. However,  $F(y,z) = \{m,w\}$ and  $\ell_{m} < \ell_{w}$. Thus $y \cdot z = m$ and by the same argument, $x \cdot m = b$. Therefore:
$$x \cdot (y \cdot z) = x \cdot m = b \neq w = (x\cdot y)\cdot z .$$
Thus the geometric product on $\LL(\A_2)$ is not associative.
However, note that $\LL_0(\A_2)$ is an associative LRB, by Proposition \ref{prsL0semigrp}.}
\end{example}

\begin{remark}\label{hyperLRB-rem}
{\rm Note that one way to overcome the need to choose an element from the set $F(P_1,P_2)$ when $|F(P_1,P_2)|=2$ is to consider the set of faces $\LL(\A)$ as an hyper-LRB. An {\em hyper-LRB} is a nonempty set $S$ together with a map $*: P^*(S) \times P^*(S) \to P^*(S)$, where $P^*(S)$ consists of all nonempty subsets of $S$, such that this algebraic structure satisfies the additional properties of an LRB: $\{ x \} * \{ x \}=\{ x \}$ and $\{ x \} * \{ y \} * \{ x \} = \{ x \} * \{ y \}$ for all $x,y \in S$, where the equalities are equalities between sets (some references for this proposed algebraic structure are \cite{Co,CoLe,Voug}). The investigation of this natural structure will be treated in a future research.}
\end{remark}

\subsubsection{The associative product} \label{subsecAssocProd}

As we saw in Example \ref{exampNonAssoc}, the geometric product introduced in Definition \ref{defPart2} may not be  associative. Moreover, it does
not satisfy requirement (2), i.e., if $x \cdot y = z$ then $i(x)i(y) = i(z)$, where $i : \LL \to \LL_0$ is the sign function, sending each face to its associated vector of signs. In this section, we introduce a different product on $\LL$  that will be associative and satisfy requirement (2). However, in order to obtain this, we have to assume that $\LL_0(\A) = {\rm Image}(i)$ is  closed under the product induced by $(L_2^1)^n$ (see Example \ref{exmNonClosedI} above for CL arrangements whose Image($i$) is not closed under this product and see Proposition \ref{prsL0semigrp} for a sufficient condition for the closeness of the product on $\LL_0(\A)$).

\begin{definition} (Associative product on $\LL(\A)$) \label{defAssocProd}\\
Let $\A$ be a real CL arrangement such that $\LL_0(\A)$ is closed under the product induced by $(L_2^1)^n$. Define a function $j:\LL_0 \rightarrow \LL$ as follows. For every $a \in \LL_0$,
if $|i^{-1}(a)| = 1$, then $j(a) \doteq i^{-1}(a)$. Otherwise, choose an element $a_0 \in i^{-1}(a)$ and define $j(a) \doteq a_0$.

For any two faces $x,y \in \LL$, define the product:
$$x \cdot y \doteq j(i(x)i(y)).$$
\end{definition}

Note that in the case of a line (and hyperplane) arrangement $\A$, for every $a \in \LL_0(\A)$, $|i^{-1}(a)| = 1$. This means that the associative product introduced in Definition \ref{defAssocProd} coincides with the one induced by $(L^1_2)^n$. Hence, this product can also be thought of as a generalization of the corresponding product defined in the case of line arrangements.

\medskip

In the following proposition, we present some properties of this product:

\begin{prs}\label{prsAssocProdProperty}
Let $(\LL,\preceq)$ be the poset of faces of a real CL arrangement, and let $\cdot$ be the product introduced in Definition \ref{defAssocProd} (i.e. the function $j$ is already given). Then:
\begin{enumerate}
\item $x \cdot (y \cdot z) = (x \cdot y) \cdot z$ (associativity),
\item $x \cdot y \cdot x = x \cdot y$,
\item $x^2$ is not necessarily equal to $x$.
\end{enumerate}
\end{prs}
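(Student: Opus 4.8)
The plan is to push everything through the defining formula $x \cdot y = j(i(x)i(y))$ and the fact that $\LL_0$, being a sub-semigroup of $(L_2^1)^n$, is itself a left regular band under the componentwise product. The key observation is that $i$ behaves well with respect to $j$: by the definition of $j$ we have $i(j(a)) = a$ for every $a \in \LL_0$ (since $j(a) \in i^{-1}(a)$, whether or not the fibre is a singleton). This identity is what lets us transfer the LRB identities from $\LL_0$ up to $\LL$.

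First I would prove associativity. Writing $a = i(x)$, $b = i(y)$, $c = i(z)$, we have $x \cdot y = j(ab)$, so $i(x \cdot y) = i(j(ab)) = ab$, and hence
\begin{equation*}
(x \cdot y) \cdot z = j\bigl(i(x \cdot y)\,i(z)\bigr) = j\bigl((ab)c\bigr).
\end{equation*}
Symmetrically $x \cdot (y \cdot z) = j\bigl(a(bc)\bigr)$. Since multiplication in $(L_2^1)^n$ (hence in $\LL_0$) is associative, $(ab)c = a(bc)$, so the two expressions agree. For property (2), the same computation gives $x \cdot y \cdot x = j\bigl((ab)a\bigr)$ and $x \cdot y = j(ab)$; but $\LL_0 \subseteq (L_2^1)^n$ satisfies the LRB identity $(ab)a = ab$ componentwise, so $j((ab)a) = j(ab)$, i.e. $x \cdot y \cdot x = x \cdot y$. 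For property (3), it suffices to exhibit one real CL arrangement and one face $x$ for which $x \cdot x \neq x$: take $x$ with $|i^{-1}(i(x))| > 1$ (for instance one of the two tangent-point-adjacent faces $P_1, P_2$ of Figure \ref{ChamberConic2}, where $i(P_1) = i(P_2)$), and choose the function $j$ so that $j(i(x)) \neq x$. Then $x \cdot x = j(i(x)i(x)) = j(i(x)) \neq x$, since $i(x)i(x) = i(x)$ in $(L_2^1)^n$.

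There is essentially no obstacle here — the whole point of Definition \ref{defAssocProd} is that it realizes $\LL$ as a set-theoretic ``lift'' of the honest semigroup $\LL_0$ along the section $j$, and the only structural fact one loses is idempotency, precisely because $j$ need not be a section of $i$ that hits the chosen representatives consistently with the original faces. The one mild subtlety worth spelling out is that property (3) is a non-result: one must be careful to phrase it as ``there exists a choice of $j$ (equivalently, there exists an arrangement with a non-monomorphic $i$) for which $x^2 \neq x$ for some $x$'', rather than claiming it always fails; when $i$ is injective, $j = i^{-1}$ and $(\LL,\cdot) \cong \LL_0$ is a genuine LRB. I would state the counterexample explicitly referencing the arrangement of Figure \ref{ChamberConic2} to make clear that such arrangements exist, and note that this is exactly the price paid for dropping requirement (1) in favor of requirements (2) and (5).
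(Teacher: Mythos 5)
Your proof is correct and takes essentially the same route as the paper: properties (1) and (2) are transferred from the associativity and the LRB identity of $(L_2^1)^n$ via the observation $i\circ j=\mathrm{id}_{\LL_0}$ (which the paper leaves implicit behind the word ``immediate''), and (3) is established by a counterexample where $i$ is non-injective and $j$ picks the other representative. The only difference is in the choice of counterexample: the paper uses a line meeting a circle transversally in two points $p_1,p_2$ with $i(p_1)=i(p_2)$, which has the advantage of manifestly satisfying the closure hypothesis of Definition \ref{defAssocProd}, whereas you invoke the arrangement of Figure \ref{ChamberConic2}.
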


\begin{proof}
We will prove property (1). Property (2) is proven similarly using the identity $i(x)i(y)i(x) = i(x)i(y)$ in $\LL_0(\A)$. For proving property (1), we have to show that $x \cdot (y \cdot z) = (x \cdot y) \cdot z$:
\begin{eqnarray*}
x \cdot (y \cdot z) & \stackrel{(*)}{=} & x \cdot j(i(y)i(z)) = \\
& \stackrel{(*)}{=} & j(i(x)i(j(i(y)i(z)))) =\\
& \stackrel{(**)}{=} & j(i(x)(i(y)i(z)))=\\
& \stackrel{(***) }{=} & j((i(x)i(y))i(z))=\\
& \stackrel{(**)}{=} & j(i(j(i(x)i(y)))i(z))=\\
& \stackrel{(*)}{=} & (j(i(x)i(y))) \cdot z =\\
& \stackrel{(*)}{=} & (x \cdot y) \cdot z
\end{eqnarray*}
where $(*)$ uses the definition of the associative product, $(**)$ is by the identity $i \circ j ={\rm Id}$ and $(***)$ uses the associativity in $\LL_0(\A)$.

For showing property (3), look at the CL arrangement consisting of a line intersecting transversally a circle. Let $p_1$ and $p_2$
be the two intersection points, and denote $\al = i(p_1)$. Note that $i(p_1) = i(p_2) = \al$. We may choose $j(\al) = p_1$ or $j(\al) = p_2$. If we choose $j(\al) = p_1$, we have that:
$$p_2\cdot p_2 = j(i(p_2)i(p_2)) = j(i(p_2)) = j(\al) = p_1,$$
and thus: $p_2^2 = p_1 \not= p_2$. For the other choice, i.e. $j(\al) = p_2$, we get that $p_1^2 = p_2 \not= p_1$.
\end{proof}

\begin{remark}
{\rm (1) Notwithstanding Proposition \ref{prsAssocProdProperty}(3), the product introduced in  Definition \ref{defAssocProd} still satisfies $x^2 = x^3$ (this is a specific case of Proposition \ref{prsAssocProdProperty}(2), when taking $x=y$). Thus, $(\LL, \cdot)$  is an \emph{aperiodic semigroup}, i.e. for every $x \in \LL$, $x^2$ is an idempotent, since
$$(x^2)^2 = x^4 = x^3\cdot x = x^2 \cdot x = x^3 = x^2,$$
and the set $\{x^2 : x \in \LL\}$ is an LRB.

(2) Note  that once there are different faces in $\LL$ having the same image under $i$, then Definition \ref{defAssocProd} does not introduce a unique product on $\LL$, as it depends on the choice made by the function $j$ in this definition.

(3) Note that one way to overcome the need to choose of an element made by the function $j$ (mentioned in part (2) of this remark) is to consider the set of faces $\LL(\A)$ as an hyper-LRB, as already mentioned with respect to a similar problem in the geometric product (see Remark \ref{hyperLRB-rem} above). The investigation of this natural structure will be treated in a future research as well.}
\end{remark}

\subsection{Applications} \label{subsec_App}

In this section, we deal with three applications of the LRB structure associated to the face poset of CL arrangements, introduced in the above sections: random walks (Section \ref{subsecRandomWalk}), stereographic projections (Section \ref{subsec_strProj}) and an example of an LRB, induced by a CL arrangement, which cannot be induced by any hyperplane arrangement (Section \ref{subsecLRB_cl}).

\subsubsection{Random walks}\label{subsecRandomWalk}

In this subsection, we briefly investigate the random walks on the set of faces of a given CL arrangement, with comparison to the known results in hyperplane arrangements.

Note that for a semigroup $S$, a \emph{step} in the random walk goes from $s \in S$ to $x \cdot s$, where $x\in S$ is chosen with probability $w_x$. Concentrating on LRBs, it is reasonable to choose $s$ to be a chamber, hence getting a random walk on the set of chambers (which is an ideal, since $x \cdot s$ is also a chamber, regardless of what $x$ is).

In general, there are several results about random walks on semigroups, see e.g. \cite{BCL}, but more surprising results arise when we concentrate on LRBs.  For example (see Theorem \ref{brownThm} below), the transition matrix of the random walk on the chambers of an LRB can be diagonalized over $\RR$ and its eigenvalues and their multiplicities can be easily computed \cite{B,BD}.

Let $\A$ be a CL arrangement having at least one conic, such that there is an {\em associative} product on $\LL = \LL(\A)$, inducing on it an LRB structure. In order to formulate the theorem exactly, we need to introduce a new semi-lattice $L'$ associated to $(\LL, \cdot)$ (see \cite{B,B2} and \cite[p. 153]{St}):

\begin{definition}
For $x,y\in \LL$, define $x \sim y$ if and only if $x\cdot y=x$ and $y \cdot x=y$. Define $L' = \LL/\sim $ and let  the map ${\rm supp'}: \LL \twoheadrightarrow L'$ be the quotient map. For $x \in \LL$, let $[x]$ be the corresponding element in $L'$.
\end{definition}

One can see that $L'$ is a semi-lattice and ${\rm supp'}$ is order-preserving (see \cite[p. 153]{St}).

Note that if the product is not associative, then the relation $\sim$ is not an equivalence relation (see Example \ref{exmDiffLattice} below).

Although $L'$ is a semi-lattice, it may not be a lattice, since a greatest lower bound may not exist (but can be added; see the definition of $\hat{L'}$ below). $L'$ is a graded semi-lattice of either rank $2$ or rank $3$: if the lattice has a minimal element, then it is of rank $3$; note that this minimal element would be the point (if it exists) which all the $1$-dimensional components pass through it (note that if the arrangement $\A$ is a central line arrangement, i.e. all the lines intersect at one point, then $L'$ is of rank $2$). For example, looking at the CL arrangement $\A$ in Figure \ref{example_L_tag} below and at the associated associative LRB $\LL(\A)$, one notes that there are three $0$-dimensional faces (the three intersection points $x,y,z$), nine $1$-dimensional faces and seven $2$-dimensional faces. When computing the associated semi-lattice $L'$, we see that the equivalence class of the point $z$ is the minimal element, whereas the rank of the equivalence classes of $x$ and $y$ is 1 and the whole lattice $L'$ has rank $3$.
If the lattice does not have a minimal element, then the equivalence class of a point has rank $0$, a line or a conic have rank $1$ and $\RR^2$ has rank $2$. $L'$ is also a semimodular semi-lattice. Recalling the definition of a semimodular semi-lattice (i.e. for every $u,v\in L$ such that $u$ and $v$ cover $x\in L$, there exists $y\in L$ that covers $u$ and $v$), we see that in our case (a lattice of rank $2$ or $3$), $[x]$ can only be an equivalence class of a point, and $[x]$ is contained in $[u]$ and $[v]$ where $u$ and $v$ are lines or conics, and hence $y = \RR^2$.

\begin{figure}[!ht]
\epsfysize 3.5cm
\centerline{\epsfbox{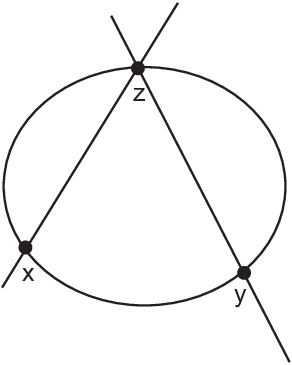}}
\caption{An example of a CL arrangement whose semi-lattice $L'$ has rank 3. Note that $L'$ is isomorphic to the intersection semi-lattice $L$ in this case, as can be seen in Figure \ref{prodNotGeo}(b).}\label{example_L_tag}
\end{figure}

\begin{example}\label{exmDiffLattice}
\rm{
Let us look again at Example \ref{exampNonAssoc}(1) and Figure \ref{exampNonAssocProd1}. Recalling that the faces $s,p_1,p_2,p_3$ satisfy $s = (p_2 \cdot p_1) \cdot p_3 \neq p_2 \cdot (p_1 \cdot p_3) = p_2$, we note that $p_1\cdot p_2 = p_1$ and $p_2\cdot p_1 = p_2$, and also that $p_1\cdot p_3 = p_1$ and $p_3\cdot p_1 = p_3$. Hence $p_1 \sim p_2$ and $p_1 \sim p_3$. However, $p_2 \cdot p_3 = s$ and hence $p_2 \not\sim p_3$. Hence the non-associativity implies that the relation $\sim$ is not transitive, i.e. the relation $\sim$ is not an equivalence relation if the product is not associative. Indeed, assume that $x \sim y$ and $y \sim z$. So we have: $x \cdot y=x$ and $y \cdot z=y$. Hence, the inequality $(x \cdot y)\cdot z \neq x \cdot (y \cdot z)$ implies that $x \cdot z \neq x \cdot y = x$. Thus $x \cdot z \neq x$ and hence
$x \not\sim z$.
}
\end{example}

Since for a CL arrangement, the associated face LRB $\LL$ does not usually have an identity, we artificially associate an identity element $e$ to it, forming an LRB $\hat{\LL}$ (we will see later what is the effect of this association  on a random walk). Since $\hat{\LL}$ is an (associative) LRB with identity, we can apply Brown's theorem \cite{B} on it. Let $\hat{L'} = {\rm supp'} (\hat{\LL})$, and let $\mathcal{C}$ be the set of chambers of $\hat{\LL}$. For $X \in \hat{L'}$, let $c_x$ be the number of chambers $c\in \mathcal{C}$ such that $x \preceq c$, where $x$ is any element satisfying ${\rm supp'}(x)=X$. Then:

\begin{thm}[Brown {\cite{B}, Theorem 1}]\label{brownThm}
Let $\{w_x\}_{x \in \LL}$ be a probability distribution on $\hat{\LL}$, and let $P$ be the transition matrix of the random walk on the chambers, defined as follows:
$$ P(c,d) = \sum\limits_{x \cdot c=d}w_x \ \  \mbox{   for  } \ c,d\in \mathcal{C}$$

Then $P$ is diagonalizable and has an eigenvalue
$\lambda_X = \sum\limits_{{\rm supp'}(y) \leq X}w_y,$ for each $X \in \hat{L'}$
with multiplicity $m_X$, where
$\sum\limits_{Y \geq X}m_Y = c_X$.
Equivalently, $m_X = \sum\limits_{Y \geq X} \mu(X,Y)c_Y$,
where $\mu(\cdot,\cdot)$ is the M\"{o}bius function of the lattice $\hat{L'}$ (see the definition of the M\"{o}bius function in the appendix in Section \ref{sec-app1}).
\end{thm}

We now give an example of a CL arrangement and the computation of the eigenvalues of its associated transition matrix in order to see the significant difference between the case of CL arrangements and the case of hyperplane arrangements.

\begin{example}
\rm{ Consider the CL arrangement $\A$ consisting of a circle $C$ centered at the origin and three concurrent lines $L_1,L_2,L_3$, intersecting the circle transversally and pass through the origin (see Figure \ref{randomWalk}). This CL arrangement has twelve $2$-dimensional faces ($d_1,\dots,d_{12}$), eighteen $1$-dimensional faces ($c_1$,\ldots,$c_6$ and $s_1$,\ldots,$s_{12}$) and seven $0$-dimensional faces ($p_1$,\ldots,$p_6$ and $O$). The product on $\LL$ is defined using the geometric product, i.e. by Definition \ref{defPart2}. We have that $p_1 \sim p_4$, $p_2 \sim p_5$ and $p_3 \sim p_6$, so we get the semi-lattice $L'$. Note that since the arrangement is not central, we artificially add to it the identity element $e$, getting the lattice $\hat{L'}$, presented in Figure \ref{latticeLtag}.  The fact that the product on $\LL$ is associative can be checked directly.

\begin{figure}[h]
\epsfysize=8cm \centerline{\epsfbox{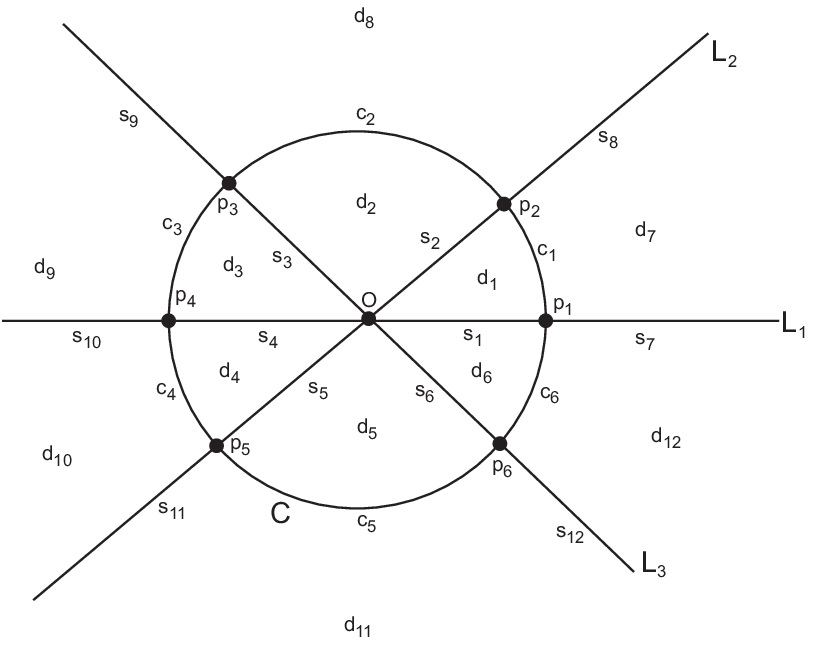}}
\caption{The faces of a given CL arrangement $\A$, for computing the eigenvalues of its associated transition matrix}\label{randomWalk}
\end{figure}

\begin{figure}[h]
\epsfysize=5cm \centerline{\epsfbox{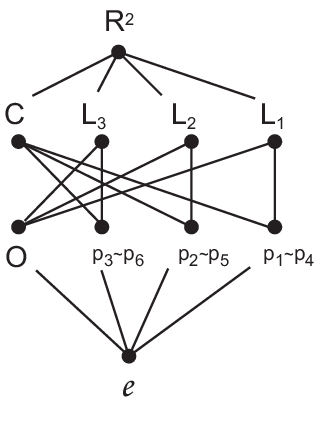}}
\caption{The lattice $\hat{L'}$ of the CL arrangement $\A$}\label{latticeLtag}
\end{figure}

Following the example of dihedral arrangements given by Brown and Diaconis \cite[Section 3A]{BD}, we assume that the measure $w$ is supported only on the 1-dimensional faces. For the example, we attach two different measures to these faces: the first measure $w_1$ will be uniform, and hence: $w_1(c_i)=w_1(s_j)=\frac{1}{18}$
where $1 \leq i \leq 6$ and $1 \leq j \leq 12$. The second measure $w_2$ will have different weights on the arcs of the circle and on the segments of the lines: fix $k>1$ and define $w_2(c_i)=\frac{1}{6k}$ where $k>1$ and $1 \leq i \leq 6$ and $w_2(s_j)=\frac{1}{12}\left( 1 -\frac{1}{k} \right)$ where $1 \leq j \leq 12$.

By Theorem 1 of Brown \cite{B} (appeared above as Theorem \ref{brownThm}), one can compute the eigenvalues and their multiplicities of the transition matrix associated to the CL arrangement and its given measure. In Table \ref{table_eigen} we summarize the eigenvalues and their multiplicities for both measures (its explanations will follow). Moreover, in the right columns we write the corresponding values of the eigenvalues (and their multiplicities) in the case of $m$ concurrent lines passing through the origin (instead of $3$ lines in the CL arrangement presented in Figure \ref{randomWalk}), and a circle  centered at the origin.

\begin{table}[!ht]
{\small \begin{tabular}{|c|c|c|c|c|c|c|}
\hline
\multirow{2}{*}{Face $X$} & \multicolumn{2}{|c|}{Eigenvalue  (w.r.t. $w_1$)} & \multicolumn{2}{|c|}{Eigenvalue (w.r.t. $w_2$)} & \multicolumn{2}{|c|}{Multiplicity} \\
\cline{2-7}
 & 3 lines & $m$ lines & 3 lines & $m$ lines & 3 lines & $m$ lines \\
\hline
\hline
&&&&&& \\ [-1em]
$X=\RR ^2$       & $\lambda_X=1$ & $\lambda_X=1$ & $\lambda_X=1$ & $\lambda_X=1$ & $m_X = 1$  & $m_X =1$ \\
&&&&&& \\ [-1em]
\hline
&&&&&& \\ [-1em]
&&&&&& \\ [-1em]
$X \in \{ L_1,L_2,L_3 \}$  & $\lambda_X=\frac{4}{18}$ & $\lambda_X=\frac{4}{6m}$ & $\lambda_X=\frac{1}{3}\cdot \left( 1 -\frac{1}{k} \right)$ & $\lambda_X=\frac{1}{m}\cdot \left( 1 -\frac{1}{k} \right)$ & $m_X = 1$ & $m_X =1$ \\
&&&&&& \\ [-1em]
&&&&&& \\ [-1em]
\hline
&&&&&& \\ [-1em]
&&&&&& \\ [-1em]
$X=C$            & $\lambda_X=\frac{1}{3}$ & $\lambda_X=\frac{1}{3}$ & $\lambda_X=\frac{1}{k}$ & $\lambda_X= \frac{1}{k}$ & $m_X =1$ & $m_X =1$ \\
&&&&&& \\ [-1em]
&&&&&& \\ [-1em]
\hline
&&&&&& \\ [-1em]
$X \in \{ p_1,p_2,p_3 \}$  & $\lambda_X=0$ & $\lambda_X=0$ & $\lambda_X=0$ & $\lambda_X=0$ & $m_X =1$ & $m_X =1$ \\
&&&&&& \\ [-1em]
\hline
&&&&&& \\ [-1em]
$X=O$            & $\lambda_X=0$ & $\lambda_X=0$ & $\lambda_X=0$ & $\lambda_X=0$ & $m_X =2$ & $m_X =m-1$ \\
&&&&&& \\ [-1em]
\hline
&&&&&& \\ [-1em]
$X=e$  & $\lambda_X=0$ & $\lambda_X=0$ & $\lambda_X=0$ & $\lambda_X=0$ & $m_X =2$ & $m_X =m-1$ \\
\hline
\end{tabular}}
\medskip
\caption{ The eigenvalues and their multiplicities for the transition matrix of the CL arrangement presented in Figure \ref{randomWalk} with respect to the given measures $w_1$ and $w_2$. In the right columns, we write the corresponding values of the eigenvalues and their multiplicities in the case of $m$ concurrent lines (instead of $3$ lines, which the corresponding values are written in the left columns) passing through the origin.}\label{table_eigen}
\end{table}

We now explain the content of the table row by row for the first measure $w_1$ (the eigenvalues for the second measure $w_2$ are computed similarly). Note that the computation of the multiplicities is independent of the choice of the specific measure, as can be seen directly from Theorem \ref{brownThm}.

If $X=\RR ^2$, all the 1-dimensional faces $F$ satisfy $F \subseteq X$ and hence $\lambda_X=1$. Also, we have $c_X=1$ (see its definition before Theorem \ref{brownThm}) and therefore $m_X=1$.

If $X=L_i$ for $1 \leq i \leq 3$, the support of the four segments of the line $L_i$ is contained in the line $L_i$ and hence $\lambda_X=\frac{4}{18}$ (since the measure of each segment is $\frac{1}{18}$). Moreover, we have $c_X=2$ and therefore $m_X=1$. By the same argument, if $X=C$, the six arcs of the circle $C$ satisfy that their support is contained in the circle $C$ and hence $\lambda_X=\frac{1}{3}$. Similarly, we have $c_X=2$ and therefore $m_X=1$.

If $X=p_i$ for $1 \leq i \leq 3$, there is no 1-dimensional face which is contained in a point, and hence $\lambda_X=0$. Moreover, we have $c_X=4$ and therefore $m_X=1$ (e.g. if $X=p_1$, we have $4=c_{p_1}=m_{\RR^2} +m_C+m_{L_1}+m_{p_1}=3+m_{p_1}$, which implies that $m_{p_1}=1$). By the same argument, if $X=O$, then $\lambda_X=0$. As before, we have $c_X=6$ and therefore $m_X=2$ (since we have $6=c_O=m_{\RR^2}+m_{L_1}+m_{L_2}+m_{L_3}+m_O=4+m_O$, which implies that $m_O=2$).

If $X=e$, there is no 1-dimensional face which is contained in $e$, and hence $\lambda_X=0$. Moreover, we have $c_X=12$ (since there are twelve $2$-dimensional faces ($d_1$,\ldots,$d_{12}$) in the CL arrangement). Therefore, we have:
$$12=c_e=m_{\RR^2} +m_C+m_{L_1}+m_{L_2}+m_{L_3}+m_{p_1}+m_{p_2}+m_{p_3}+m_O+m_e=10+m_e,$$
which implies that $m_e=2$.

\medskip

The last computation shows a significant difference between line (and hyperplane) arrangements and CL arrangements: in the case of non-central line arrangements (i.e. not all the lines intersect in the same single point), the identity element $e$ does not contribute any eigenvalue (i.e. the multiplicity of the eigenvalue associated to the element $e$ is $0$), see \cite[p. 881, Example 1]{B}. On the other hand, as is shown in the last computation,  in a CL arrangement whose associated LRB of faces has no identity element, the identity element $e$ does contribute the eigenvalue $0$ in a non-zero multiplicity.
}
\end{example}

\subsubsection{Stereographic projection} \label{subsec_strProj}

In this subsection, we investigate the relation between hyperplane arrangements in $\RR^3$ and CL arrangements in $\RR^2$.

We start by describing the transition between a hyperplane arrangement $\B$ in $\RR^3$ and a CL arrangement $\A_\B$ in $\RR^2$. Let $S$ be a
unit sphere in $\RR^3$,  centered at $(0,0,-1)$. Let $O = (0,0,0)$ and let $\pi = \{z = -2\}$ be a plane tangent to $S$. Consider the stereographic projection
$p_O : S \to \pi$, centered at  $O$. Using $p_O$, $\B$ induces a CL arrangement on $\pi$, by setting $\A_\B \doteq p_O(\B \cap S)$. In this way, every plane that belongs to $\B$, which passes through $O$ and is not tangent to $S$, will induce under $p_O$ a line in $\A_\B$, and every plane that belongs to $\B$, which does not pass through $O$, intersects $S$ but is not tangent to it, will induce under $p_O$ an ellipse in $\A_\B$.

Note that we exclude hyperplane arrangements in $\RR^3$ that have planes that are tangent to $S$ or that do not intersect $S$, since in that case, the resulting projection would be either a point or an empty set, which are excluded by the definition of a real CL arrangement (see Definition \ref{defRealCLarr}).

\begin{prs}
(1) Every line arrangement in the plane $\pi$ can be induced by a central plane arrangement in $\RR^3$ via $p_O$.\\
(2) Not every CL arrangement in $\pi$ can be induced by a  plane arrangement in $\RR^3$ via $p_O$.
\end{prs}

\begin{proof}
(1) The corresponding plane for every line $\ell$ in the line arrangement in $\pi$ is the plane passing through $\ell$ and $O$.\\
(2) Indeed, a parabola cannot be induced in $\A_\B$ by $p_O$.
\end{proof}

Now we discuss the connections between the LRBs: Let $\B$ be a central hyperplane arrangement in $\RR^3$ with no planes tangent to $S$,  and $\A_\B = p_O(\B \cap S)$ be the corresponding CL arrangement in $\RR^2$. A natural question is:
what is the connection between the LRBs $\LL(\B)$ and $\LL(\A_\B)$? Even
when $\LL(\A_\B)$ is an associative LRB or when $\B$ is a central hyperplane arrangement, the map $p_O$ does not induce a homomorphism. For example, it is known that there is only one combinatorially-equivalent central plane arrangement with three planes. Choose two such arrangements $\B_1, \B_2$, such that $\B_1 \cap S$ consists of only great circles, while $\B_2 \cap S$ consists of two great circles and a conic section which is not a great circle. Then $\A_{\B_1}$ is a central line arrangement with three lines, while $\A_{\B_2}$ is a generic line arrangement with three lines. Thus, $\LL(\B_1) \simeq \LL(\B_2)$; however, $\LL(\A_{\B_1}) \not\simeq \LL(\A_{\B_2})$.

Note that although $p_O$ does not induce a homomorphism in the level of the LRBs, it does help to distinguish between hyperplane arrangements whose face LRBs are isomorphic (an example for that is the above arrangements $\B_1, \B_2$). Hence, one may pose the following question:

\begin{question}
Are there any other invariants that distinguish between $\B_1$ and $\B_2$? Can the above method help us to distinguish between hyperplane arrangements with isomorphic face LRBs?
\end{question}

\subsubsection{Non-geometric LRBs coming from CL arrangements} \label{subsecLRB_cl}

In this subsection, we present an example of an LRB, being the face LRB of a CL arrangement, which cannot be embedded in $(L_2^1)^n$ for any $n \in \N$. This immediately implies
that this LRB is not isomorphic to the face LRB of any hyperplane arrangement, and hence the family of face LRBs associated to CL arrangements is {\em broader} than the corresponding family of LRBs associated to hyperplane arrangements.

\begin{example}\label{exmLRBnonEmb}
\rm{
Consider the real CL arrangement $\mathcal A$ which consists of a line and a circle tangent to it (see Figure \ref{NotH21LRB}). This arrangement has 7 faces, and we denote the two 1-dimensional sections (i.e. faces) of the line by $b$ and $a$, the circle by $c$, and the 2-dimensional face below the line and outside the circle by $d$. Let $e$ be the tangency point.
As usual, denote $\LL =\LL(\A)$.

\begin{figure}[h]
\epsfysize=2cm \centerline{\epsfbox{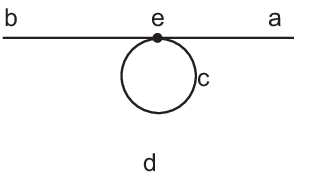}}
\caption{An example of a CL arrangement whose LRB is not geometric (i.e. cannot be embedded in $(L^1_2)^n$ for any $n$)}\label{NotH21LRB}
\end{figure}

The product on $\LL$ is defined using the geometric product (Definition \ref{defPart2}). Note that $e$ is the unit element and one can check that $(\LL,\cdot)$ is indeed an LRB. We have the following  multiplication table for $\{a,b,c\}$:

$$\begin{array}{c||c|c|c}
\cdot & a & b & c \\
\hline\hline
a     & a & a & d \\
\hline
b     & b & b & d \\
\hline
c     & d & d & c \\
\end{array}$$

\medskip

Assume that we have a\emph{ monomorphism}  $h:\LL \to (L_2^1)^n$ for some $n$. Note that the equalities
$$a \cdot b~=~a,\qquad b \cdot a~=~b$$
imply that $h(a)$ and $h(b)$ have zeros in the same coordinates. Indeed, if $(h(a))_j=0$, then  $(h(a))_j(h(b))_j =  (h(b))_j$ (by the multiplication laws in $L_2^1$), but since $(h(a))_j (h(b))_j = (h(a))_j = 0$ (the left equality is by the homomorphism), so we have $(h(b))_j = 0$. By the same argument, using the second equality, we get that $(h(b))_j=0$ implies that $(h(a))_j=0$.

Since $a \neq b$, then $h(a) \neq h(b)$, which means that there exists a coordinate $j$, $1 \leq j \leq n$, such that $(h(a))_j \neq (h(b))_j$
and both  coordinates are not zero (so without loss of generality, one is $+$ and the other is $-$).
But:
$$(h(d))_j \stackrel{d = a \cdot c}{=} (h(a \cdot c))_j = (h(a))_j(h(c))_j \stackrel{(h(a))_j \neq 0}{=} (h(a))_j   \neq $$
$$\neq (h(b))_j \stackrel{(h(b))_j \neq 0}{=} (h(b))_j(h(c))_j = (h(b \cdot c))_j \stackrel{b \cdot c =d}{=}  (h(d))_j,$$
by the multiplication laws  in $L_2^1$, which is a contradiction.}
\end{example}

\medskip

Therefore, $\LL=\LL(\A)$ with the geometric product (Definition \ref{defPart2}) is an example of an LRB which is not \emph{geometric} (i.e. it cannot be embedded in $(L_2^1)^n$ for any $n$, see \cite[Section 3.7]{MSS}).

\medskip

Moreover, note that for the LRB $\LL_0(\mathcal A)$ (which is contained in $(L_2^1)^2$), one cannot find a hyperplane arrangement $\mathcal A' \subset \RR^N$ such that
$\LL_0(\mathcal A) \simeq \LL(\mathcal A') \simeq \LL_0(\mathcal A')$. Indeed, $\LL_0(\mathcal A)$ has 6 elements, has a unit $i(e)=(0,0) \in (L_2^1)^2$ and the three  elements $(+,+),(-,-),(-,+)$ form the unique two-sided ideal of $\LL_0(\mathcal A)$. Thus, if such a hyperplane arrangement $\mathcal A'$ exists, it should be a central hyperplane arrangement with three chambers, which is impossible.

\section{Plane curve arrangements: structure of sub-LRBs} \label{secStrSubLRB}

Let $\A$ be an arrangement of curves $\{H_1, \ldots, H_m\}$ in $\RR^2$, where $H_i$ is defined by $\{ f_i=0 \}$ and $f_i \in \RR[x,y]$.
In this section, we study the explicit structure of sub-LRBs of $\LL_0(\A)$ induced by the embedding of a component $H$ into the arrangement $\A$. The motivation for this study comes from the global interest in combinatorics and algebra in general, and in arrangements in particular, in order to understand the connections between the algebraic structure (the LRB structure, in our context) of the whole arrangement and its induced arrangements (the deleted arrangement and the restricted arrangement). Moreover, in line arrangements, $\LL_0(\A) \simeq \LL (\A)$, and hence it is interesting to study the structure of $\LL_0(\A)$ and its sub-LRBs for a CL  arrangement $\A$, despite the fact that $\LL_0(\A) \not\simeq \LL (\A)$ for almost any CL arrangement $\A$.

For a line arrangement $\A$, let $H \doteq H_i \in \A$ be a given line. As before, recall that $\A^H = \A - \{H\}$ is the \emph{deleted arrangement} in $\RR^2$ and $\A_H = \left\{ K \cap H \ |\ K \in \A^H\right\}$ is the \emph{restricted arrangement} contained in $H$. Thus, one can define two associated LRBs: the {\it deleted LRB} $\LL_0 \left( \A^H \right)$, corresponding to the deleted arrangement $\A-\{H\}$ in $\RR^2$, and the {\it restricted LRB} $\LL_0 \left( \A_H \right)$, corresponding to the restriction of the arrangement $\A$ to $H$. Obviously, $\LL_0 \left( \A^H \right)$ is obtained from $\LL_0(\A)$ by deleting the $i^\text{th}$ coordinate. However, the question is how $\LL_0 \left( \A_H \right)$ is embedded in $\LL_0(\A)$. We answer this question for line arrangements in Section \ref{subsec-3.1}. After defining the notion of an LRB of a pointed curve in Section \ref{subsec-3.2}, we can answer this question also for CL arrangements in Section \ref{subsecEmbedCL}.

\medskip

Given an arrangement $\A$ of $m$ smooth real curves (we require that the curves will be smooth for preventing self-intersections), where every curve has only one component in $\RR^2$, one can associate a vector of signs in $(\{+,-,0\})^m$ to any face of the
arrangement, which describes the mutual position of this face with respect to the curves $H_i$, where $H_i$ is  defined by $\{ f_i=0 \}$, $1 \leq i \leq m$. Explicitly, as before, one can associate to $\A$ a subset $\LL_0(\A)$ of $(L_2^1)^m$ induced by these vectors of signs.

For a given $i \in \{1,\dots,m\}$, which is the index of $H=H_i$ in the arrangement $\A$, define:
$$\LL_0(\A)|_H \doteq \{ x \in \LL_0(\A) : (x)_i = 0\} \subset (L_2^1)^m.$$
$\LL_0(\A)|_H$ is a sub-LRB of $\LL_0(\A)$, to which corresponds the restricted
arrangement $\A_H$ as a sub-LRB, since it is a subset of $\LL_0(\A)$ and thus the associativity and the LRB properties: $x^2=x$ and $x \cdot y \cdot x=x \cdot y$ are immediately satisfied. The closure under the product is obvious. Note that $ \sharp \LL_0(\A_H) = \sharp (\LL_0(\A)|_H)$.

The question is: what are the connections between the LRBs $\LL_0(\A_H)$ and $\LL_0(\A)|_H$? This question is manageable once one defines a structure of an LRB on $\A_H$ as will be done in Definitions \ref{defLRBunbounded} and \ref{defLRBbounded}.
Note that when either $H$ is a bounded component or an unbounded one, $\A_H$ is a collection of $k$ points $\{p_1,\ldots,p_k\}$ in $H$.

\medskip

\emph{Note}: From now on, we  assume that $\LL_0(\A)$ is an LRB, i.e. it is closed under the product induced by $(L_2^1)^m$ (see Section \ref{subsec_semiCL} above). Moreover, to simplify notations, we assume that each component  $H_i$ is connected in $\RR^2$, where $H_i$ is defined by $\{f_i = 0\}$ (i.e. $H_i$ can not be an hyperbola).

\begin{remark}\label{remNotationI}
{\rm Note that $\LL_0(\A_H) \subseteq (L^1_2)^k$ and $\LL_0(\A)|_H \subseteq (L^1_2)^m$. In order to distinguish between the different vectors of
signs when we refer to a face, which can be thought of both as a face in $\LL(\A_H)$ and in $\LL(\A)|_H \subseteq \LL(\A)$ (where $\LL(\A)|_H$ is the set of faces of $\LL(\A)$ contained in $H$), we denote:
$$i_\A: \LL(\A)|_H \to (L^1_2)^m,\,\, {\rm{Image}} (i_\A) = \LL_0(\A)|_H$$
and
$$i_H: \LL(\A_H) \to (L^1_2)^k,\,\, {\rm{Image}} (i_H) = \LL_0(\A_H),$$
where both maps describe the vectors of
signs in $\LL_0(\A)|_H$ (resp. $\LL_0(\A_H)$) of a face in $\LL(\A)|_H$ (resp. $\LL(\A_H)$). See the exact definition of the map $i_H$ in Definitions \ref{defLRBunbounded} and \ref{defLRBbounded}.}
\end{remark}

\subsection{Preliminaries: The embedding principle for the face LRB of line arrangements}\label{subsec-3.1}

In this section, we present the embedding principle for the face LRB of line arrangements in $\RR^2$ (which can be easily generalized to hyperplane arrangements), i.e. we present the connections between $\LL_0(\A_H)$ and $\LL_0(\A)|_H \subset \LL_0(\A)$ for a line arrangement $\A$ and $H \in \A$. This is done as a preparation for Proposition \ref{prsEmbed}, which deals with the embedding principle for arrangements of smooth real curves in $\RR^2$.

\begin{lemma} \label{lemLineArrLRB}
Let $\A = \{H_1,\ldots,H_m\}$ be an arrangement of lines in $\RR^2$, where $H_i$ is defined by $\{f_i=0\}$ and $f_i \in \RR[x,y]$. Denote $H = H_1$ and let $ \{ H \cap H_i \ | \ 1 \leq i \leq m \} = \{p_1,\dots,p_k\} \subset H$ be $k$ points. Then, there is an isomorphism of LRBs:
$$\varphi : \LL_0(\A_H)  \stackrel{\sim}{\to} \LL_0(\A)|_H \subseteq (L_2^1)^m,$$
satisfying the following properties:
\begin{enumerate}
\item $(\varphi(\LL_0(\A_H)))_1 = 0.$
\item For every $j>1$:

\noindent
(a)  If $H \cap H_j = \emptyset$, then $(\varphi(\LL_0(\A_H)))_j $ is constant (either $+$ or $-$, depending on the mutual position of the parallel lines $H$ and $H_j$). Explicitly, all the vectors in $\varphi(\LL_0(\A_H))$ have the same sign in their $j^{\rm th}$ coordinate.

\noindent
(b)  If $H \cap H_j = \{p_s\}$ for some $1 \leq s \leq k$, then $(\varphi(\LL_0(\A_H)))_j = a\cdot (\LL_0(\A_H))_s$, where $a\in \{\pm 1\}$.

\end{enumerate}

\end{lemma}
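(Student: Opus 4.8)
The plan is to construct the map $\varphi$ explicitly via the sign vectors and then verify it is a well-defined bijection respecting the product. First I would set up coordinates on the line $H = H_1$: since $H$ is isomorphic to $\RR$, the restricted arrangement $\A_H = \{p_1,\dots,p_k\}$ is just $k$ points on a line, and I would order them along $H$ so that $p_1 < p_2 < \dots < p_k$ (this is the "index of $p_s$" referred to in part 2(b)). Each point $p_s$ is the intersection $H \cap H_{j(s)}$ for a unique $j(s) \in \{2,\dots,m\}$ (genericity of line intersections, or at worst, if several lines pass through the same point we just note the argument adapts — but here each $p_s$ corresponds to one coordinate, as stated). A face of $\A_H$ is either one of the $k$ points or one of the $k+1$ open segments/rays; its sign vector in $\LL_0(\A_H) \subseteq (L_2^1)^k$ records, for each $p_s$, whether the face lies left of, right of, or at $p_s$.

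Next I would define $\varphi$ on $\LL_0(\A_H)$ by specifying the target vector in $(L_2^1)^m$ coordinate-by-coordinate. Given $a \in \LL_0(\A_H)$, pick the face $F \in \LL(\A_H)$ with $i_H(F) = a$; this $F$, sitting inside $H \subset \RR^2$, is also a face of $\A$ whose support is contained in $H$, hence lies in $\LL(\A)|_H$, and I set $\varphi(a) = i_\A(F)$. The content of the lemma is then to compute $i_\A(F)$ coordinate-wise: coordinate $1$ is $\mathrm{sign}(f_1(F)) = 0$ since $F \subseteq H_1$, giving (1); for $j > 1$ with $H \parallel H_j$, the whole line $H$ lies in one of the open half-planes $\{f_j > 0\}$ or $\{f_j < 0\}$, so $(\varphi(a))_j$ is a constant sign independent of $F$, giving 2(a); and for $j > 1$ with $H \cap H_j = \{p_s\}$, the restriction of $f_j$ to the line $H$ is an affine function of the coordinate on $H$ vanishing exactly at $p_s$, so $\mathrm{sign}(f_j(F))$ is $+$ on one side of $p_s$, $-$ on the other, $0$ at $p_s$ — i.e. it equals $(a)_s$ up to the global sign $\pm 1$ determined by whether $f_j$ increases or decreases along the chosen orientation of $H$. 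That is exactly statement 2(b), and the appearance of the constant scalar is the same phenomenon as in Remark \ref{remDifSignIsoLRB}.

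It then remains to check that $\varphi$ is an isomorphism of LRBs. Injectivity: from the coordinate description, the coordinates $j$ with $H \cap H_j = \{p_s\}$ already recover all of $a = (a)_1,\dots,(a)_k$ (each $(a)_s$ up to a fixed sign), so $\varphi(a)$ determines $a$; equivalently, for a line arrangement $i_\A$ is injective on all faces, in particular on faces supported in $H$, so $F \mapsto i_\A(F)$ is injective and $i_H$ is a bijection onto $\LL_0(\A_H)$. Surjectivity onto $\LL_0(\A)|_H$: any vector in $\LL_0(\A)|_H$ is $i_\A(F)$ for some face $F$ of $\A$ with $(i_\A(F))_1 = 0$, which forces $F \subseteq H_1 = H$, hence $F \in \LL(\A_H)$ and $i_\A(F) = \varphi(i_H(F))$. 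Finally, $\varphi$ is a homomorphism because both $\LL_0(\A_H) \subseteq (L_2^1)^k$ and $\LL_0(\A)|_H \subseteq (L_2^1)^m$ carry the componentwise $L_2^1$-product, and the coordinate-by-coordinate formulas above are each either the identity $L_2^1 \to L_2^1$, a sign-flip automorphism of $L_2^1$, or a constant map into $\{+,-\} \subset L_2^1$ — and each of these is an $L_2^1$-morphism (a constant map $c$ satisfies $c \cdot c = c$ trivially and intertwines the products since in the target the first non-zero coordinate wins and the source coordinate is the relevant one); composing/pairing morphisms componentwise gives a morphism. I expect the only mildly delicate point to be bookkeeping the $\pm 1$ scalars and confirming that a sign-flip is an automorphism of $L_2^1$ (immediate: $x \mapsto -x$ fixes $0$ and swaps $\pm$, and $L_2^1$'s multiplication "$x$ if $x \neq 0$, else $y$" is visibly preserved); everything else is routine, so no single step is a real obstacle — the main work is just organizing the coordinate-wise case analysis cleanly.
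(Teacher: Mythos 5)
Your proposal is correct and follows essentially the same route as the paper's proof: identify the faces of $\A_H$ with the faces of $\A$ supported in $H$, then compute the sign vector $i_\A(F)$ coordinate by coordinate, splitting into the cases $j=1$, $H\parallel H_j$, and $H\cap H_j=\{p_s\}$, with the $\pm 1$ scalar accounting for the independent choice of signs on the two sides of $p_s$. Your explicit verification that $\varphi$ is a bijective homomorphism (via projections, sign-flips, and constant maps being $L_2^1$-morphisms) is a detail the paper leaves implicit, but it does not change the argument.
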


Before the proof, we illustrate the result of the above lemma by an example.

\begin{example}\label{Exam3.3}
\begin{figure}[h]
\epsfysize=4.5cm \centerline{\epsfbox{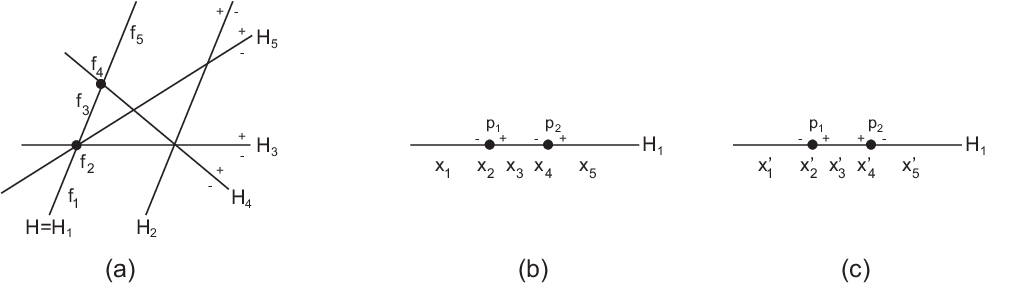}}
\caption{An example for illustrating the result of Lemma \ref{lemLineArrLRB}: Part (a) presents the line arrangement $\A$. $f_i$ are the faces contained in $H_1$ in the face set $\LL(\A)$. Parts (b) and (c) present two arrangements of points on a line, which can be thought of as the restricted arrangement $\A_{H_1}=\A_H$. The faces in the face set $\LL_0(\A_{H_1})$ are $x_i$ in part (b) (and $x'_i$ in part (c)).}\label{ExamLineEmb}
\end{figure}
{\rm
Figure \ref{ExamLineEmb}(a) presents a line arrangement $\A$  and Figures \ref{ExamLineEmb}(b) and \ref{ExamLineEmb}(c) present two  arrangements of points on a line, both can be thought of as the restricted arrangement $\A_{H_1}=\A_H$. Note that the difference between the arrangements  appearing in Figures \ref{ExamLineEmb}(b) and \ref{ExamLineEmb}(c) is that the signs assigned with respect to the point $p_2$ are opposite.

\begin{enumerate}
\item Considering the point arrangement in Figure \ref{ExamLineEmb}(b), the faces of $\A_{H_1}$  are denoted by $x_1,\dots,x_5$; their corresponding images by $\varphi$, i.e. these faces in the arrangement $\A$, are denoted by $f_1,\dots,f_5$ (resp.). Let $H = H_1$. Then, the corresponding LRBs are
$$\LL_0(\A_H) = \left\{
\begin{array}{c}
i_H(x_1)=(-,-),\ i_H(x_2)= (0,-),\  i_H(x_3)= (+,-),\\
i_H(x_4)= (+,0),\ i_H(x_5)= (+,+)
\end{array} \right\},$$
and in a table form in Table \ref{table_A1},
\begin{table}[!ht]
$$\begin{array}{|c||c|c|}
\hline
 & (i_H(x_i))_1 & (i_H(x_i))_2 \\
\hline
\hline
i_H(x_1) & - & - \\
i_H(x_2) & 0 & - \\
i_H(x_3) & + & - \\
i_H(x_4) & + & 0 \\
i_H(x_5) & + & + \\
\hline
\end{array}$$
\caption{$\LL_0(\A_H)$ in a table form}\label{table_A1}
\end{table}
and
$$\LL_0(\A)|_H = \varphi(\LL_0(\A_H)) =
\left\{
\begin{array}{c}
i_{\A}(f_1)=(0,+ ,-,-,-),\ i_{\A}(f_2)= (0,+,0,-,0),\\
i_{\A}(f_3)= (0,+ ,+,-,+ ),\ i_{\A}(f_4)= (0,+ ,+,0,+),\\
i_{\A}(f_5)= (0,+ ,+, +,+ )
\end{array} \right\},$$
and in a table form in Table \ref{table_A2}.
\begin{table}[!ht]
$$\begin{array}{|c||c|c|c|c|c|}
\hline
 & (i_{\A}(f_j))_1 & (i_{\A}(f_j))_2 & (i_{\A}(f_j))_3 & (i_{\A}(f_j))_4 & (i_{\A}(f_j))_5 \\
\hline
\hline
i_{\A}(f_1) & 0 & + & - & - & - \\
i_{\A}(f_2) & 0 & + & 0 & - & 0 \\
i_{\A}(f_3) & 0 & + & + & - & + \\
i_{\A}(f_4) & 0 & + & + & 0 & + \\
i_{\A}(f_5) & 0 & + & + & + & + \\
\hline
\hline
 & 0 & + & (i_H(x_i))_1 & (i_H(x_i))_2 & (i_H(x_i))_1 \\
\hline
\end{array}$$
\caption{$\LL_0(\A)|_H$ in a table form, where in the last row, the relations to
$\LL_0 (\A_H)$ are presented.}\label{table_A2}
\end{table}

\medskip

We deal with $\varphi(\LL_0(\A_H))$ coordinate by coordinate:
\begin{enumerate}
\item First, note that $(\varphi(\LL_0(\A_H)))_1  = 0$ (property (1) of the lemma). In the table form, all the values in the first column of Table \ref{table_A2} are $0$.
\item Since $H_2 \cap H = \emptyset$,  $(\varphi(\LL_0(\A_H)))_2  = +$, i.e., by property (2)(a) of the lemma, the second coordinate in all the vectors of $\varphi(\LL_0(\A_H))$ is $+$. In the table form, all the values in the second column of Table \ref{table_A2} are $+$.
\item Since $H_3 \cap H  = H_5 \cap H = \{p_1\}$, we have:
    $$(\varphi(\LL_0(\A_H)))_3  = (\varphi(\LL_0(\A_H)))_5  = (\LL_0(\A_H))_1$$
    (by property (2)(b) of the lemma, where in this case $a = +$).
    In the table form, the third and fifth columns of Table  \ref{table_A2} are equal to the first column of Table \ref{table_A1}.
\item Since $H_4 \cap H  = \{p_2\}$, we have: $(\varphi(\LL_0(\A_H)))_4  = (\LL_0(\A_H))_2$ (again by property (2)(b) of the lemma, where in this case $a = +$). In the table form, the fourth column of Table  \ref{table_A2} is equal to the second column of Table \ref{table_A1}.

\end{enumerate}

\medskip

\item
Considering the point arrangement in Figure \ref{ExamLineEmb}(c), the faces of $\A_H$  are denoted by $x'_1,\dots,x'_5$. In this case, we have:
$$\LL_0(\A_H) = \left\{
\begin{array}{c}
i_H(x'_1)=(-,+),\ i_H(x'_2)= (0,+),\ i_H(x'_3)= (+,+),\\
i_H(x'_4)= (+,0),\ i_H(x'_5)= (+,-)
\end{array} \right\},$$
and in a table form in Table \ref{table_A3} (note that $(i_H(x'_i))_2 = -(i_H(x_i))_2$ for all $1 \leq i \leq 5$ due to the change in the assignment of signs with respect to the point $p_2$).
\begin{table}[!ht]
$$\begin{array}{|c||c|c|}
\hline
 & (i_H(x'_i))_1 & (i_H(x'_i))_2 \\
\hline
\hline
i_H(x'_1) & - & + \\
i_H(x'_2) & 0 & + \\
i_H(x'_3) & + & + \\
i_H(x'_4) & + & 0 \\
i_H(x'_5) & + & - \\
\hline
\end{array}$$
\caption{$\LL_0(\A_H)$ in a table form}\label{table_A3}
\end{table}

As before, since $H_3 \cap H  = H_5 \cap H = \{p_1\}$, we have:
$$(\varphi(\LL_0(\A_H)))_3  = (\varphi(\LL_0(\A_H)))_5  = (\LL_0(\A_H))_1,$$
by property (2)(b) of the lemma, where in this case $a = +$. In the table form, the third and fifth columns of Table  \ref{table_A2} are equal to the first column of Table \ref{table_A3}.

On the other hand, as $H_4 \cap H  = \{p_2\}$, we have: $(\varphi(\LL_0(\A_H)))_4  = -(\LL_0(\A_H))_2$, by property (2)(a) of the lemma, but in this case $a = -$. Explicitly, in contrast to Example \ref{Exam3.3}(1)(d) above, in order to obtain $(\varphi(\LL_0(\A_H)))_4$, one has to multiply all the values in $(\LL_0(\A_H))_2$ by the scalar ($-1$). In the table form, one has to multiply the second column of Table \ref{table_A3} by the scalar ($-1$) in order to get the fourth column of Table \ref{table_A2}.
\end{enumerate}
}
\end{example}

\begin{proof}[Proof of Lemma \ref{lemLineArrLRB}]
We start by proving that $\varphi$ is an isomorphism. First we show that $\varphi$ is a homomorphism. We will prove coordinatewise, i.e. for each $i$ and for each $x,y \in \LL_0(\A_H)$ we have that
$(\varphi(xy))_i = (\varphi(x))_i (\varphi(y))_i$. For $i=1$, both sides indeed equal $0$.
For $i>1$ and $H \cap H_i = \emptyset$, then for every $z \in \LL_0(\A_H)$, $(\varphi(z))_i$ is a constant $c$ (either $+$ or $-$), and therefore we have that $(\varphi(x))_i (\varphi(y))_i = c \cdot c = c = (\varphi(xy))_i$ by the idempotency property of $a \in L_2^1$.
Otherwise (when $H \cap H_i = \{p_s\}$), we use again the idempotency property of $a \in L_2^1$, i.e.
$+ \cdot + = +$ and $- \cdot - = -$, to get:
$$(\varphi(xy))_i = a \cdot (xy)_s = a^2 \cdot (xy)_s = a \cdot a \cdot (x)_s \cdot (y)_s \stackrel{(*)}{=} a \cdot (x)_s \cdot a \cdot (y)_s = (\varphi(x))_i (\varphi(y))_i,$$
where equality $(*)$ holds due to the fact that $a \neq 0$ and the properties of the multiplication in $L_2^1$.

Next, $\varphi$ is injective, since if there were two faces in $\LL_0(\A_H)$ sent to the same face in $\LL_0(\A)|_H$ that would have meant that these two different faces in $H$ have the same mutual position with respect to all the other lines $H_2,\dots,H_m$, which is impossible. By the fact that $\sharp (\LL_0(\A)|_H) = \sharp \LL_0(\A_H)$, we get that $\varphi$ is isomorphism.

\medskip

Now we pass to the proof of the two properties of $\varphi$. We use the notations introduced in Remark \ref{remNotationI}. Property (1) is obvious (since we are in $H_1$).

For property (2)(a), note that if $H \cap H_j = \emptyset$ where $H_j$ is defined by $\{f_j=0\}$, then the line $H_j$ is parallel to $H$ and all the faces of $\A$ which lays in $H$ are either in the halfplane $\{f_j>0\}$ (in this case $(\varphi(\LL_0(\A_H)))_j  = +$) or in  $\{f_j<0\}$ (in this case $(\varphi(\LL_0(\A_H)))_j  = -$).

As for property (2)(b), assume that $H \cap H_j =\{p_s\}$ for some
$1 \leq s \leq k$ and let $c$ be a face of $\A$ which lays in $H$.
As $c$ goes over all the faces which lay in $H$, it goes over all the set $\rm Image(\varphi)$.
Then, either $c \subset \{f_j>0\}$, $c \subset \{f_j<0\}$ or $c \subset \{f_j=0\}$.
In the third case, $c = p_s \in H$ and thus $(i_H(c))_s=0$ and as $c \in H_j$, $(i_\A(c))_j = 0$. As for the first two cases, the fact that $c$ is in  one of the two halfplanes is determined by the position of $H$ with respect to $H_j$ (as $c \subset H$), which is reduced to checking if $c$ is located to the right of $\{p_s\} = H \cap H_j$ or to its left. Therefore, up to a constant scalar multiplication by $a \in \{\pm 1\}$ (for all faces $c$ which lay in $H$), $c \subset \{f_j>0\}$
is equivalent to the fact that $c$ is to the right of the point $p_s$. The (constant) scalar multiplication is needed, since a priori there is no connection between the sign in $\LL_0(\A_H)$ that is assigned to the faces to the right of $p_s$ and the sign in $\LL_0(\A)$ assigned to these faces in the halfplane above $H_j$ (see Figures  \ref{ExamLineEmb}(b) and \ref{ExamLineEmb}(c) for an example of two different assignments of signs for $\A_H$).
\end{proof}

\begin{remark}
{\rm Note that there is a natural assignment of signs for the elements in $\A_H$, that is induced by $\LL_0(\A)$, in the following way: if the sign in $\LL_0(\A)$ that is given to the halfplane $\{f_j>0\}$ is $+$ (in the $j^{\rm th}$ coordinate, where $H_j$ is defined by $\{f_j=0\}$ and $H \cap H_j =\{p_s\}$), and the section
$\{ x > p_s\}$ on the line $H$ is contained in $\{f_j>0\}$, then the sign in $\LL_0(\A_H)$ associated to $\{ x > p_s\}$  (i.e. in the $s^{\rm th}$ coordinate) will also be $+$. If $\{ x < p_s\} \subset \{f_j>0\} $, then the sign associated to the faces contained in this section will be $-$. In the case of this natural assignment of signs to $\A_H$, the scalar multiplication in property (2)(b) of Lemma \ref{lemLineArrLRB} is not needed (i.e. the scalar $a$ will always be $+$).

However, note that Lemma \ref{lemLineArrLRB} is more general, as we do not assume any a priori connection between the signs associated to the halfplanes in $\A$ and the signs associated to the half-lines in $\A_H$.}
\end{remark}

\begin{remark}
{\rm Note that the isomorphism of $\LL_0(\A_H)$ to $\LL_0(\A)|_H$ can also be described in the language of oriented matroids. Explicitly, recall that the map $z$ taking a vector of signs (i.e. an element in $\LL_0(\A)$) to its set of zero indices is a semigroup homomorphism to the power set equipped with intersection (see e.g. \cite[Proposition 4.1.13]{matroids}).  The image $L$ is the geometric lattice associated to the oriented matroid, which can be used to construct the matroid.  If $H$ is one of the hyperplanes, then the subposet of $L$, which contains those sets of indices consisting (also) of the index of $H$, is the geometric lattice associated to $\LL_0(\A)|_H$ , i.e. to the restriction of the arrangement to $H$. The map from the associated lattice to $\LL_0(\A)|_H$ to the associated lattice to $\LL_0(\A_H)$ is via the forgetting of the index corresponding to $H$. General results about matroids and their contractions can be found in \cite{matroids}.  }
\end{remark}

\subsection{The structure of the face LRB of a real pointed curve}\label{subsec-3.2}
Before dealing with the embedding principle in the general case of arrangements of smooth curves (Section \ref{subsecEmbedCL}), one has to consider two cases with respect to the structure of the induced face LRB of a real pointed curve $\A_H$ (i.e. an arrangement of points on a real curve $H$): where $H$ is an unbounded component and where $H$ is a bounded one. In this section, we study the structure of the face LRB of a real pointed curve in these two cases.

Given an arrangement of smooth curves $\A$ in $\RR^2$ and a connected component $H \in \A$,
the restricted arrangement $\A_H$ will be the real curve $H$ with points on it corresponding to the intersection points of the deleted arrangement $\A-\{H\}$ with the component $H$, i.e. we get an arrangement of real points on a connected component $H$.

\medskip

We start with the case of an unbounded component:

\begin{definition}[The face LRB structure of an unbounded component] \label{defLRBunbounded} \ \\
Let $H \subset \RR^2$ be an unbounded smooth connected real plane curve (with no self-intersections). Let $\{p_1,\ldots,p_k\}$ be a collection of points on $H$ and let $\LL(H)$ be the set of faces of $H$ with respect to these points. Explicitly, the faces are the points themselves and the open sections of the curve that are bounded by the points (by either one or two points).

Each point $p_j \in H$, for $1 \leq j \leq k$, divides the curve $H$ into three disjoint parts: the point itself and two open sections: $H_{j,1}$ and $H_{j,2}$, such that $H = \{p_j\} \cup H_{j,1} \cup H_{j,2}$.
Associate to the set $H_{j,1}$ the sign $+$, to the set $H_{j,2}$ the sign $-$ and to the set $\{p_j\}$ the sign $0$. Obviously, one can rename the set $H_{j,1}$ as $H_{j,2}$ and $H_{j,2}$ as $H_{j,1}$ and thus induce a different assignment of signs, but once we assign these signs for each set, they are fixed.

For each face $P \in \LL(H)$, we associate  an element $i_H(P)$ in $(L^1_2)^k$, that is, a vector of signs, in the following way: for each $j$, $1 \leq j \leq k$, if $P \subseteq H_{j,1}$, then $(i_H(P))_j = +$; otherwise, if $P \subseteq H_{j,2},$ then $(i_H(P))_j = -$; otherwise, we have that $P = \{ p_j\},$ and then $(i_H(P))_j = 0$.
\end{definition}

\begin{example}
Let $H = \{y=x^2\},\ p_1=(0,0)$ and $p_2 = (2,4)$. Let $p_1,p_2,X_1,X_2,X_3$ be the faces of $\LL(H)$ (see Figure \ref{examLRBunbound}(a)).
We set the signs of $H_{i,j}\,,  i,j \in \{1,2\}$, according to Figure \ref{examLRBunbound}(b). Thus:
$$i_H(p_1) = (0,-),\, i_H(p_2) = (+,0),\, i_H(X_1) = (+,+),\, i_H(X_2) = (+,-),\, i_H(X_3) = (-,-).$$

\begin{figure}[h]
\epsfysize=3.7cm \centerline{\epsfbox{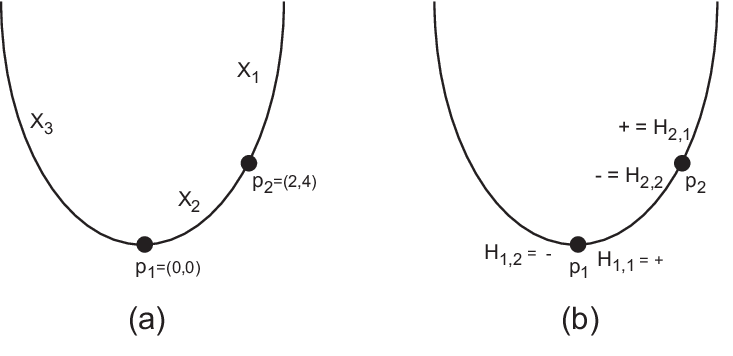}}
\caption{An example for illustrating Definition \ref{defLRBunbounded}: the face LRB structure of an unbounded pointed component.}\label{examLRBunbound}
\end{figure}

\end{example}

In this way, we get a monomorphic map $i_H: \LL(H) \to (L^1_2)^k$ and we can identify $\LL(H)$ with its image $i_H(\LL(H)) \subseteq (L^1_2)^k$.

\medskip

We have the following lemma:
\begin{lemma} \label{lemLRBwellDef}
(1) The set $i_H(\LL(H))$ is closed under the product induced by the LRB $(L^1_2)^k$, so it is an LRB as well.\\
(2) Different assignments of signs to  $H_{j,1}$ and $H_{j,2}$ (as described above) induce isomorphic LRBs.
\end{lemma}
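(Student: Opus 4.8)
The plan is to prove both parts by reducing to the structure of the one-dimensional sign arrangement $L_2^1$, exploiting the fact that $H$ is an unbounded smooth curve with no self-intersections, hence topologically an open interval $\mathbb{R}$. First I would fix a homeomorphism $\psi : H \to \mathbb{R}$; under $\psi$ the points $p_1,\dots,p_k$ go to $k$ distinct real numbers, say $\psi(p_j) = t_j$, and the faces of $\LL(H)$ (the points and the open sections between them) correspond exactly to the faces of the arrangement of the $k$ points $\{t_1,\dots,t_k\}$ on the real line. For each $j$, one of the two components $H_{j,1}, H_{j,2}$ maps to the open ray $\{t > t_j\}$ and the other to $\{t < t_j\}$; assign $\varepsilon_j \in \{+1,-1\}$ to record which of the two was given the sign $+$ (so $\varepsilon_j = +1$ means $H_{j,1} = \psi^{-1}(\{t>t_j\})$). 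Then $(i_H(P))_j$ is precisely $\varepsilon_j \cdot \operatorname{sign}(t - t_j)$ for any $t$ in the section $\psi(P)$. This realizes $i_H(\LL(H))$ as (a scalar-twist of) the image of the ordinary sign map of a line arrangement of $k$ parallel hyperplanes in $\mathbb{R}^1$, i.e. essentially the one used in Section~\ref{subsecLRB_Hyp}.

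For part (1), once this identification is in place, closure is immediate: the image of the sign map of a genuine (here one-dimensional) real hyperplane arrangement is closed under the componentwise product of $(L_2^1)^k$ — this is exactly the classical fact recalled in Section~\ref{subsecLRB_Hyp} that for hyperplane arrangements $\LL_0(\A)$ is a semigroup, applied to the arrangement of the $k$ points on $\psi(H) \cong \mathbb{R}$. I would give the short direct verification anyway: given faces $P, Q$ with $\psi(P)$ containing a point $t$ and $\psi(Q)$ containing a point $s$, pick a point $u$ slightly past $t$ on the segment from $t$ toward $s$; then for each $j$, $\operatorname{sign}(u - t_j)$ equals $\operatorname{sign}(t - t_j)$ if $t \neq t_j$ and equals $\operatorname{sign}(s - t_j)$ if $t = t_j$, which is precisely the $L_2^1$-product of the $j$-th coordinates; hence the section of $H$ containing $\psi^{-1}(u)$ is a face whose sign vector is $i_H(P)\, i_H(Q)$. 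Since $i_H(\LL(H))$ is closed in $(L_2^1)^k$ and the latter is an LRB, $i_H(\LL(H))$ inherits the identities $x^2 = x$ and $xyx = xy$ and is an LRB.

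For part (2), a different legal assignment of signs amounts to choosing, for each $j$, a sign $\delta_j \in \{+1,-1\}$ and swapping the roles of $H_{j,1}$ and $H_{j,2}$ exactly when $\delta_j = -1$. The new sign map $i_H'$ satisfies $(i_H'(P))_j = \delta_j \cdot (i_H(P))_j$, so $i_H'$ is obtained from $i_H$ by the coordinatewise action of the map $(x_1,\dots,x_k) \mapsto (\delta_1 x_1, \dots, \delta_k x_k)$ on $(L_2^1)^k$. This map is an automorphism of the LRB $(L_2^1)^k$ (on each coordinate, $x \mapsto -x$ fixes $0$ and swaps $+,-$, and it is a semigroup homomorphism of $L_2^1$ since the defining rule "$x\cdot y = x$ unless $x = 0$" is preserved by negating both entries); restricting this automorphism to $i_H(\LL(H))$ gives an isomorphism onto $i_H'(\LL(H))$. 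This is precisely the observation already recorded in Remark~\ref{remDifSignIsoLRB} for hyperplane arrangements, transported through $\psi$.

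I do not expect a serious obstacle here; the only point that needs a little care is making the reduction to $\mathbb{R}$ rigorous — i.e. checking that an unbounded smooth connected curve with no self-intersections really is homeomorphic to $\mathbb{R}$ in a way carrying the face decomposition to that of a point arrangement on the line (one must rule out the component being, say, a closed loop, which is exactly what "unbounded" plus "no self-intersections" guarantees, and note that "unbounded" also excludes the case of an open arc with a limit point creating extra incidences). Once that topological normalization is granted, both statements are formal consequences of the line-arrangement case already developed in Section~\ref{subsecLRB_Hyp} and Remark~\ref{remDifSignIsoLRB}.
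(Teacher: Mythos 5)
Your proposal is correct and follows essentially the same route as the paper: the paper's proof also reduces both parts to the fact that $H$ is topologically equivalent to a line, so that $i_H(\LL(H))$ is the face LRB of a pointed line (a one-dimensional hyperplane arrangement), with part (2) then following from Remark \ref{remDifSignIsoLRB}. You simply spell out the homeomorphism, the closure verification, and the sign-flip automorphism in more detail than the paper does.
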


\begin{proof}
(1) The set $i_H(\LL(H))$ is closed under the product induced by the LRB $(L^1_2)^k$, since $H$ is topologically equivalent to a line, and the assignment of the vectors of signs to  $\LL(H)$ is thus equivalent to associating an LRB structure to the set of faces of a pointed line, as a special case of a hyperplane arrangement (as described in Section \ref{subsecLRB_Hyp}).

\medskip

\noindent
(2) Since $H$ is topologically equivalent to a line, different assignments of signs to  $H_{j,1}$ and $H_{j,2}$, will induce isomorphic LRBs, by Remark \ref{remDifSignIsoLRB}.
\end{proof}

\medskip

We proceed to the case of a bounded component. If $H$ is a smooth bounded component in $\RR^2$, i.e. an oval, we can consider  an arrangement of points $\{p_1,\ldots,p_k\}$ on $H$ and look at the corresponding set of faces $\LL(H)$. The component $H$ will be later called {\it a smooth pointed (bounded) oval}. We cannot treat $\LL(H)$ as in the former case, since there is no meaning to the phrase
``every point  divides the curve $H$ into three disjoint parts'', when we are on an oval. We introduce here an alternative way to associate an LRB structure to $\LL(H)$.

\begin{definition}[The face LRB structure of a bounded component] \label{defLRBbounded} \ \\
Let  $C$ be a smooth pointed (bounded) oval, where $\{p_1,\ldots,p_k\}$ is the set of $k$ points on it numerated consecutively clockwise. As can easily be seen, the set of faces $\LL(C)$ contains $2k$ faces: $k$ points $\{p_1,\ldots,p_k\}$ and  $k$ sections of the oval that are bounded by the points. Let $p_1'$ be a point to the left of $p_1$ which is infinitesimally-close to $p_1$ (see Figure \ref{LRBconic}(a)),\footnote{\ Note that we need two different points $p_1,p_1'$, since they have different roles: $p_1$ will be a face in $\LL(C)$ and $p_1'$ is a cutting point of $C$ and will not be considered later as a face in $\LL(C)$.} and let $C_1 = C -\{p_1'\}$. $C_1$ is topologically equivalent to an open segment $S = (a_1',a_1'')$, that is, there exists a distance-preserving homeomorphism $f : C_1  \rightarrow S$. Denote $f(p_i) = a_i$ for $1 \leq i \leq k$. Explicitly, $C_1$ can be considered as a straight segment $S$ that starts at the point $a_1'$ and ends at the point $a_1''$, where its two ends $a_1',a_1''$ are identified by $f^{-1}$ with $p_1'$ in $C$ (see Figure \ref{LRBconic}(b)).

\begin{figure}[h]
\epsfysize=3.5cm \centerline{\epsfbox{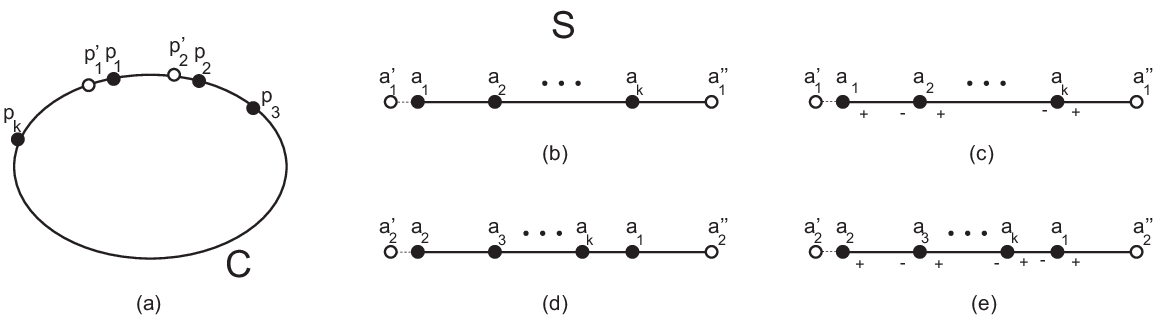}}
\caption{The LRB structure associated to an arrangement of points on an oval: Part (a) presents the pointed oval. In parts (b) and (c) we cut the oval at the point $p_1'$, and we ignore the section $(a_1',a_1)$. In parts (d) and (e) we cut the oval at the point $p_2'$, and we ignore the section $(a_2',a_2)$.}\label{LRBconic}
\end{figure}

On the pointed segment $S \cup \{a_1,\ldots,a_k\}$, the set of faces consists of $2k+1$ faces. However, on $C$, the sections $f^{-1}((a_1',a_1))$ and $f^{-1}((a_k,a_1''))$ (i.e. the preimage of the sections $(a_1',a_1)$ and $(a_k,a_1'')$) are contained in the same face. As $a_1 - a_1' =\varepsilon \ll 1$, we ignore this infinitesimally-small face and thus $\LL(S)$, the set of faces of $S$, has only $2k$ faces: $k$ points $\{a_1,\ldots,a_k\}$ and $k$ open sections of the segment. We will identify this set of faces with the set of faces $\LL(C)$.

We can now associate an LRB structure to $\LL(S)$, as it is done for a set of faces of a pointed line; that is, to every face  $P \in \LL(S)$, we associate a vector of signs $i_H(P) \in (L^1_2)^k$ in the following way: Given $1 < j \leq k$, the  point $a_j$ divides  $S$ into three disjoint parts: the point itself $\{a_j\}$ and two other open sections: $H_{j,1} = \{x > a_j\}$ and $H_{j,2} = \{x < a_j\}$.
Associate to the set $H_{j,1}$ the sign $+$, to the set $H_{j,2}$ the sign $-$ and to the set $\{ a_j\}$ the sign $0$. Obviously, as in the case of an unbounded component, one can rename the set $H_{j,1}$ as $H_{j,2}$ and $H_{j,2}$ as $H_{j,1}$ and thus induce a different assignment of signs, but once we assign these signs for each set, they are fixed.

For $j=1$, since we have ignored the section $\{a_1' < x < a_1\}$, the  point $a_1$ divides  $S$ into only two disjoint parts: the point $\{ a_1\}$ itself and  $H_{1,1} = \{x > a_1\}$. Associate to the set $H_{1,1}$ the sign $+$ (or $-$) and to the set $\{ a_1\}$ the sign $0$. Again, once we assign these signs for each set, they are fixed  (see Figure \ref{LRBconic}(c)).

Thus, the map $i_H:\LL(S) \to (L^1_2)^k$ is defined as in Definition \ref{defLRBunbounded}: for each face $P \in \LL(S)$, the $j^{\rm th}$ coordinate of $i_H(P)$ depends on whether $P = \{ a_j \}$, $P \subseteq H_{j,1}$ or $P \subseteq H_{j,2}$.
\end{definition}

Note that $\LL(S)$ has an LRB structure (by the same arguments of Lemma \ref{lemLRBwellDef}(1)) and hence by the identification of $\LL(S)$ and $\LL(C)$, also $\LL(C)$ has an LRB structure. Similar to the case of an unbounded component, as  $C_1$ is topologically equivalent to an open segment, different assignments of signs to  $H_{j,1}$ and $H_{j,2}$, as described above, will induce isomorphic LRBs, by Remark \ref{remDifSignIsoLRB}.

\medskip

We still have to prove that Definition \ref{defLRBbounded} does not depend on the choice of the initial point $p_1$, when numerating the points on $C$. This is equivalent to prove that if we choose a point $p_2'$ as a point to the left of $p_2$ (being infinitesimally-close  to $p_2$) and consider the induced LRB structure on $C_2 = C -\{p_2'\}$ (see Figures \ref{LRBconic}(a) and \ref{LRBconic}(d)), then the LRBs $i_H(\LL(C_1))$ and $i_H(\LL(C_2))$ are isomorphic:

\begin{prs}\label{prsLRBconic}
The LRBs $\LL_0(C_1) = i_H(\LL(C_1))$ and $\LL_0(C_2) = i_H(\LL(C_2))$ are isomorphic. Therefore, the LRB structure on $C \cup \{p_1,\ldots,p_k\}$ is independent of the choice of the cutting point.
\end{prs}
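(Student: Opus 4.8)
The plan is to produce an explicit isomorphism between the two LRBs by exhibiting a bijection on faces that is compatible with the sign-vector presentations, and to reduce the whole statement to the already-known fact (Lemma \ref{lemLRBwellDef}(2), via Remark \ref{remDifSignIsoLRB}) that relabeling the two sides of a cut point on a segment produces an isomorphic LRB. First I would observe that $\LL(C_1)$ and $\LL(C_2)$ are, as posets of faces, both canonically identified with $\LL(C)$, the $2k$-element face poset of the pointed oval (this identification was set up in Definition \ref{defLRBbounded}: in each case we delete an infinitesimal section and glue). So there is a canonical bijection $\psi\colon \LL_0(C_1)\to\LL_0(C_2)$ at the level of underlying sets; the content of the proposition is that $\psi$ respects the products, equivalently that the two sign functions $i_H^{(1)}$ and $i_H^{(2)}$ differ only by a relabeling of coordinates together with sign flips of the type covered by Remark \ref{remDifSignIsoLRB}.

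The key steps, in order: (1) Fix the consecutive clockwise labeling $p_1,\dots,p_k$ and the two reference points $p_1'$ (just before $p_1$) and $p_2'$ (just before $p_2$); describe the two ``cut and straighten'' homeomorphisms $f_1\colon C_1\to S_1$ and $f_2\colon C_2\to S_2$. (2) Note that $f_2\circ f_1^{-1}$ is a ``cyclic rotation'' of the segment: cutting at $p_2'$ instead of $p_1'$ just moves the single face lying between $p_1$ and $p_2$ (the one containing both $p_1'$ and $p_2'$ after the infinitesimal identifications) from one end of the segment to the other. Under this cyclic rotation the $j$-th coordinate function for $j\ge 3$ is essentially unchanged (the point $a_j$ still splits the segment into the same ``before/after'' pieces up to swapping which piece is which, which is exactly a sign flip), while the roles of the coordinates attached to $p_1$ and $p_2$ get interchanged — in $C_1$ the point $a_1$ has a degenerate (two-part) split and $a_2$ an ordinary three-part split, and in $C_2$ it is the reverse. (3) Conclude that for every face $P$, $i_H^{(2)}(\psi(P))$ is obtained from $i_H^{(1)}(P)$ by a permutation of coordinates composed with flips of some coordinates by $\pm 1$; since permuting coordinates of $(L_2^1)^k$ and flipping signs in a subset of coordinates are both LRB automorphisms of $(L_2^1)^k$ (the latter by Remark \ref{remDifSignIsoLRB}), the restriction of this automorphism carries $\LL_0(C_1)$ isomorphically onto $\LL_0(C_2)$. (4) The final sentence of the statement then follows, since any two choices of initial point differ by a chain of such adjacent moves, and isomorphism is transitive.

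I expect the main obstacle to be step (2): carefully tracking what happens to the ``degenerate'' coordinate — the one attached to the point immediately following the cut, whose associated split has only two parts rather than three because the infinitesimal section was discarded — and verifying that after the rotation this degenerate coordinate becomes an ordinary one and vice versa, in a way that still matches up face-by-face. Concretely one must check that the single face containing $p_1'$ (equivalently $p_2'$) and the two endpoint-faces of $S_1$ and $S_2$ are correctly identified, so that the claimed coordinate permutation is genuinely a bijection on coordinates and not an off-by-one mismatch. Once that bookkeeping is pinned down, the rest is a direct appeal to Remark \ref{remDifSignIsoLRB} and the obvious fact that coordinate permutations are LRB isomorphisms of $(L_2^1)^k$. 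A clean way to organize the bookkeeping is to work entirely on $C$ itself: define, for each oriented choice of cut point, the sign vector of a face directly in terms of ``clockwise vs.\ counterclockwise from $p_j$, reading from the cut,'' and show the two resulting vectors of a given face are related by the stated permutation-plus-flips; this avoids repeatedly invoking the straightening homeomorphisms and makes the cyclic-rotation structure transparent.
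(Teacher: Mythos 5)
Your overall strategy (write both LRBs in their sign-vector presentations and exhibit an explicit product-preserving bijection) is the same as the paper's, but the bijection you chose is the wrong one, and the claim used to justify it in steps (2)--(3) is false. The canonical identification of $\LL(C_1)$ and $\LL(C_2)$ with $\LL(C)$ --- i.e.\ the identity on faces --- is \emph{not} an LRB isomorphism. Take $k=2$: with $A$ the clockwise arc from $p_1$ to $p_2$ and $B$ the clockwise arc from $p_2$ to $p_1$, the paper's sign assignments give
$\LL_0(C_1)=\{i(p_1)=(0,-),\ i(A)=(+,-),\ i(p_2)=(+,0),\ i(B)=(+,+)\}$ and
$\LL_0(C_2)=\{i(p_2)=(-,0),\ i(B)=(-,+),\ i(p_1)=(0,+),\ i(A)=(+,+)\}$.
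Then $p_1\cdot p_2=(0,-)(+,0)=(+,-)=A$ in $\LL_0(C_1)$, whereas $p_1\cdot p_2=(0,+)(-,0)=(-,+)=B$ in $\LL_0(C_2)$; since your $\psi$ fixes $A$ and $B$, it does not respect the product. Correspondingly, no coordinate permutation composed with sign flips carries $i_H^{(1)}$ to $i_H^{(2)}$ face-by-face: the positions of the zeros force the permutation to be the identity, and then the value on $A$ rules out every choice of flips.

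The root of the error is in your step (2): moving the cut from $p_1'$ to $p_2'$ does \emph{not} merely swap the two sides of each $a_j$. The faces lying strictly between the two cut points (namely $p_1$ and the arc from $p_1$ to $p_2$) change sides relative to \emph{every other} marked point, while all the remaining faces stay where they were; so the partition of the face set induced by $p_j$ genuinely changes, rather than being the same partition with its two parts relabelled, and ``global sign flip of the $j$-th coordinate'' does not describe what happens. The proposition is nevertheless true, and the fix is to replace $\psi$ by the \emph{positional} bijection: both $\LL_0(C_1)$ and $\LL_0(C_2)$ are the face LRB of a half-open segment with $k$ marked points read off from the cut, and matching the $r$-th face of $S_1$ with the $r$-th face of $S_2$ --- which on $C$ is the cyclic shift $p_i\mapsto p_{(i\bmod k)+1}$, with the arcs following their endpoints --- is an isomorphism. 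This is exactly the map the paper writes down, so once the bijection is corrected the rest of your outline (including the appeal to transitivity for an arbitrary change of initial point) goes through.
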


\begin{proof}
As noted after Definition \ref{defLRBbounded}, different sign  assignments on $\LL(C_1)$ (or on $\LL(C_2)$) induce isomorphic LRB structures.
Thus, we first fix an assignment of signs for $\LL(C_1)$ and $\LL(C_2)$ and then prove that the LRBs are isomorphic.

The sign assignment on $\LL(C_1)$ is the following: for each $1 < j \leq k$, we assign the sign $+$ to $H_{j,1}$, the sign $-$ to $H_{j,2}$
and the sign $0$ to $\{a_j\}$; for $j=1$, we assign the sign $+$ to $H_{1,1}$
and the sign $0$ to $\{a_1\}$ (see Figure \ref{LRBconic}(c)).

The sign assignment on $\LL(C_2)$ is the following: for each $1 \leq j \leq k $ where $ j \neq 2$, we assign the sign $+$ to $H_{j,1}$, the sign $-$ to $H_{j,2}$ and the sign $0$ to $\{a_j\}$; for $j=2$, we assign the sign $+$ to $H_{2,1}$
and the sign $0$ to $\{a_2\}$ (see Figure \ref{LRBconic}(e)).

Thus, going over all the $2k$ faces of $\LL(C_1)$ from left to right, we get that:
$$\LL_0(C_1)=i_H(\LL(C_1)) = \left\{ \begin{array}{c}
(0,-,-,\ldots,-),\ (+,-,-,\ldots,-),\ (+,0,-,\ldots,-),\\
(+,+,-,\ldots,-),\ \ldots,\ (+,\ldots,+)
\end{array} \right\}.$$
In the same way, going over all the $2k$ faces of $\LL(C_2)$ from left to right, we get that:
$$\LL_0(C_2) = i_H(\LL(C_2))=\left\{
\begin{array}{c}
(-,0,-,-,\ldots,-),\ (-,+,-,-,\ldots,-),\ (-,+,0,-,\ldots,-),\\
(-,+,+,-,\ldots,-),\ \ldots,\ (-,+,\ldots,+),\ (0,+,\ldots,+),\ (+,+,\ldots,+)
\end{array} \right\}.$$
Both LRBs describe the movement over the $2k$ faces along a open straight (bounded) segment with $k$ marked points, i.e. given faces $x$ and $y$, then $x\cdot y$ is the face we enter in after the movement from $x$ to $y$ on this line and thus they are isomorphic.  Thus the explicit isomorphism from $\LL_0(C_1)$ to
$\LL_0(C_2)$ maps the points $p_i \mapsto p_{i({\rm mod}\,\,k)+1}$ and the sections of $C_1$ are mapped to the corresponding sections of $C_2$, according to the mapping of the points.
\end{proof}

Thus, given an arrangement $\A$ and a bounded component $H \in \A$ such that
$H \cap \A^H =\{ p_1, \dots,p_k \}$, we can choose a point $p\in H$ infinitesimally-close to the point $p_1$ and delete it. In this way, we can consider the face LRB associated to $\A_H - \{p\}$, as in the case of an unbounded component (when ignoring the infinitesimally-small section between $p$ and $p_1$). As was shown, this face LRB does not depend on the location of $p$ (when the only condition is that $p \neq p_j$ for all $j$) up to an isomorphism. Denote this associated LRB by $\LL_0(\A_H)$, which is a sub-LRB of $(L_2^1)^k$. For more examples, see Example \ref{exampEmbLRB}(2) and Figure \ref{ExamResEmb2} below.

\subsection{The embedding principle for the face LRB of CL arrangements}\label{subsecEmbedCL}

We are now ready to formulate the main result of this section:  the structure of the sub-LRBs of $\LL_0(\A)$ induced by the  components of a CL arrangement $\A \subset \RR^2$.

\begin{prs} \label{prsEmbed}
Let $\A = \{H_1,\ldots,H_m\}$ be an arrangement of smooth connected curves in $\RR^2$, such that $H_i$ is defined by $\{f_i=0\}$ where $f_i \in \RR[x,y]$. Let $H \doteq H_1$
and
$$H \cap \{H_2,\dots,H_m\} = \{p_1,\dots,p_k\} \subset H.$$
Then there is a {\em bijective function}, {\em which is not necessarily an isomorphism}, of LRBs:
$$\varphi : \LL_0(\A_H) \to \LL_0(\A)|_H \subseteq (L_2^1)^m,$$
satisfying:
\begin{enumerate}
\item $(\varphi(\LL_0(\A_H)))_1 = 0.$
\item For every $j>1$:

\noindent
(a) If $H \cap H_j = \emptyset$, then $(\varphi(\LL_0(\A_H)))_j$ is constant (either $+$ or $-$, depending on the mutual position of the non-intersecting components $H$ and $H_j$). Explicitly, all the vectors in $\varphi(\LL_0(\A_H))$ have the same sign in their $j^{\rm th}$ coordinate.

\noindent
(b) If $H \cap H_j \neq \emptyset$, let $H \cap H_j = \{ p_i \ | \ i \in K_j \}$, where $K_j$ is the set of indices of the points in $H \cap H_j$. Then:
$$(\varphi(\LL_0(\A_H)))_j = s \cdot \prod_{i \in K_j}   ((\LL_0(\A_H))_i)^{m_i}$$
where  $s \in \{\pm 1\}$, $m_i = {\rm multi}(p_i)$ is the intersection multiplicity  at the point $p_i$ (see Definition \ref{def_inter_mul}),
and the multiplication of signs (in the right hand side) is the usual product (explicitly, $+\cdot + = - \cdot - = +,\, +\cdot - = - \cdot + = -,\, 0 \cdot \{\pm\} = 0 $). Note that the numeration of the indices in the right hand side is according to the numeration of the points in the arrangement of points in $H = H_1$.
\end{enumerate}
\end{prs}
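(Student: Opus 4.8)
The plan is to extend the argument of Lemma~\ref{lemLineArrLRB} from lines to smooth connected curves, dealing with each component $H_j$ separately and reading off its sign along $H$ from the restricted arrangement $\A_H$. First I would fix the bijection $\varphi$ as follows: by the analysis of Section~\ref{subsec-3.2} (Definitions~\ref{defLRBunbounded} and \ref{defLRBbounded} and Proposition~\ref{prsLRBconic}), the set $\LL_0(\A_H)$ is a well-defined LRB inside $(L_2^1)^k$, obtained by ``cutting'' $H$ (either naturally if $H$ is unbounded, or by removing an infinitesimal point $p_1'$ if $H$ is bounded) and assigning to each of the $k$ points $p_1,\dots,p_k$ a $\pm$/$0$ sign for the two sides and the point. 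On the other side, $\LL_0(\A)|_H$ is the set of vectors of signs in $(L_2^1)^m$ of those faces of $\A$ whose support lies in $H$. Since a face of $\A_H$ (a point or an open arc of $H$) corresponds bijectively to a face of $\A$ with support contained in $H$, set $\varphi$ to be this bijection on faces, followed by $i_\A$. Property (1) is then immediate: every such face lies on $H=H_1=\{f_1=0\}$, so its first coordinate is $0$.

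Next I would prove (2)(a): if $H\cap H_j=\emptyset$, then, because $H$ is connected and does not meet $\{f_j=0\}$, the value $\mathrm{sign}(f_j)$ is constant on all of $H$, hence constant on all faces with support in $H$; this gives a constant $j$-th coordinate. For (2)(b), fix $j$ with $H\cap H_j=\{p_i\}_{i\in K_j}\neq\emptyset$ and consider the restriction $g_j \doteq f_j|_H$, a smooth function on the curve $H$ whose zero set is exactly $\{p_i : i\in K_j\}$, with $p_i$ a zero of order $m_i=\mathrm{mult}_{p_i}(H\cap H_j)$. Walking along $H$ (in the topological-line model $S$ furnished by Section~\ref{subsec-3.2}), the sign of $g_j$ changes precisely when we cross a $p_i$ with $m_i$ odd, and stays the same when $m_i$ is even. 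This is exactly the behaviour of the product $\prod_{i\in K_j}\big((\LL_0(\A_H))_i\big)^{m_i}$: the $i$-th factor $(\LL_0(\A_H))_i$ records $+$/$-$ for the two sides of $p_i$ in $\A_H$, so raising it to the power $m_i$ makes it constant when $m_i$ is even and keeps the sign flip when $m_i$ is odd, and it is $0$ exactly on the face $\{p_i\}$. Since the two sign patterns along $H$ agree up to which side we called ``$+$'', they coincide up to an overall constant scalar in $\{\pm1\}$ — this scalar accounts for the a priori unrelated sign conventions for $f_j$ on $\RR^2$ versus the conventions built into $\LL_0(\A_H)$ — which is what the statement claims. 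Finally I would note that $\varphi$ is a bijection onto $\LL_0(\A)|_H$ (it is a composite of bijections) but need not be an LRB isomorphism, and I would point to Example~\ref{exmLRBnonEmb}: for a line tangent to a circle, $\LL_0(\A)|_H$ (inside $(L_2^1)^2$) is a genuine LRB while the abstract face LRB $\LL(\A_H)$ carried by $\varphi$ can fail to embed in any $(L_2^1)^n$, so $\varphi$ transports the product only set-theoretically.

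The main obstacle I anticipate is the bounded case: when $H$ is an oval the ``three parts'' description fails and the sign vector of $\A_H$ depends on an auxiliary cut point $p_1'$; I would lean on Proposition~\ref{prsLRBconic} to know the resulting LRB is well-defined up to isomorphism, but I must still check that after this cut the sign of $g_j=f_j|_H$ along the resulting segment $S$ is genuinely governed by $\prod_{i\in K_j}((\LL_0(\A_H))_i)^{m_i}$ — in particular that crossing an even-order tangency on the oval does not spuriously flip the recorded sign, and that the ignored infinitesimal arc near $p_1'$ does not contain a zero of $g_j$ (which is automatic once $p_1'$ is chosen away from the $p_i$). A secondary technical point is justifying that $f_j|_H$ vanishes to order exactly $\mathrm{mult}_{p_i}(H\cap H_j)$ at $p_i$ and has a definite sign change parity determined by that order; this is local and follows from the standard description of intersection multiplicity of two plane curves (or, concretely, from the normal forms $\{xy=0\}$ and $\{y(y-x^2)=0\}$ used in the proof of Proposition~\ref{prsL0semigrp}), so I would treat it as routine rather than grind through it.
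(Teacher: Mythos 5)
Your proof of the substantive claims (1), (2)(a) and (2)(b) is correct, but it takes a genuinely more unified route than the paper. The paper proves (2)(b) by an explicit six-case analysis (one transversal point, one tangency, one point of higher multiplicity, two transversal points, $n$ transversal points, and the general mixture), in each case walking along $H$ from the rightmost face and tracking how the product of signs changes. Your argument replaces all of this with a single observation: $g_j=f_j|_H$ vanishes at $p_i$ to order $m_i$, hence changes sign at $p_i$ exactly when $m_i$ is odd, and the product $\prod_{i\in K_j}((\LL_0(\A_H))_i)^{m_i}$ has precisely the same flip pattern along the segment model of $H$, so the two sign functions agree up to a global $\pm1$. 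This is cleaner and subsumes the paper's cases (3) and (6) in particular, which the paper only sketches; the price is that you must justify that the vanishing order of $f_j|_H$ equals ${\rm mult}_{p_i}(H\cap H_j)$, which you reasonably treat as standard for smooth $H$. Your handling of the bounded case (cut at $p_1'$ away from the $p_i$, then argue on the segment $S$) matches the paper's reduction and is adequate, since only the interior marked points of $S$ contribute sign flips.

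The one genuine flaw is your justification of the clause ``not necessarily an isomorphism.'' Example \ref{exmLRBnonEmb} is irrelevant here: it concerns the \emph{geometric} product on $\LL(\A)$ failing to embed in $(L_2^1)^n$, whereas both $\LL_0(\A_H)$ and $\LL_0(\A)|_H$ are by construction subsets of powers of $L_2^1$ with the induced product, so neither side ``fails to embed.'' The correct (and simple) reason, which the paper gives, is local to a single component $H_j$ meeting $H$ twice: if $a,c$ are the two intersection points and $b$ the arc between them, then in $\LL_0(\A_H)$ one has $i_H(a)\,i_H(c)=i_H(b)$, whereas in $\LL_0(\A)|_H$ both $i_\A(a)$ and $i_\A(c)$ carry a $0$ in the $j^{\rm th}$ coordinate, so their product does too, while $i_\A(b)$ does not; hence $\varphi$ cannot be a homomorphism there. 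Separately, be aware that your claim that $\varphi$ is ``a composite of bijections'' is only as good as the injectivity of $i_\A$ on faces supported in $H$, which can fail (e.g.\ the two rays of a line tangent to a circle share a sign vector); the paper asserts the same cardinality equality without proof, so this is an issue you inherit rather than introduce, but it deserves a remark rather than the phrase ``composite of bijections.''
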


As before, we illustrate the result of this proposition by some examples before proving it.
\begin{example}\label{exampEmbLRB} {\rm
\begin{enumerate}
\item Figure \ref{ExamResEmb}(a) presents a CL arrangement $\A$ consisting of three  lines and a conic tangent to one of the lines, and Figure \ref{ExamResEmb}(b) presents the restricted arrangement $\A_{H_1}$. By Proposition \ref{prsL0semigrp}, $\LL_0(\A)$ is indeed a semigroup. The faces of $\A_{H_1}$  are denoted by $x_1,\dots,x_5$
    and their corresponding faces in $\A$ are denoted by $f_1,\ldots,f_5$.

\begin{figure}[h]
\epsfysize=5cm \centerline{\epsfbox{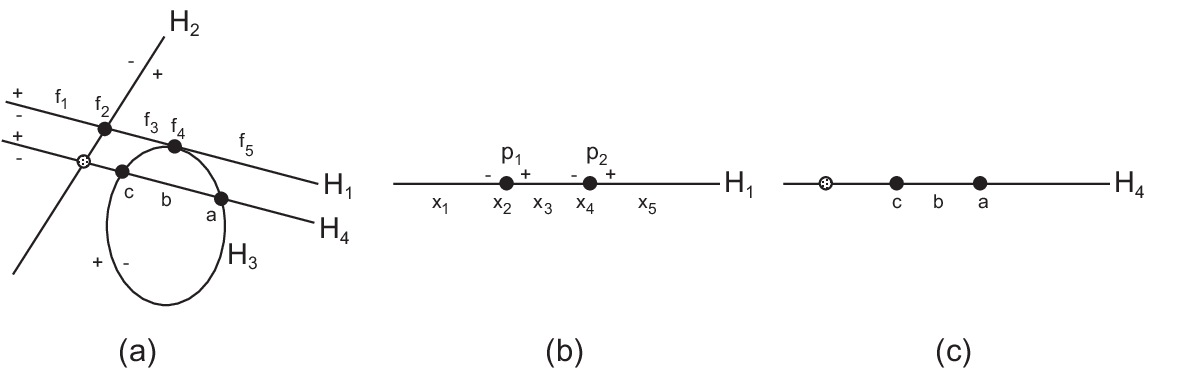}}
\caption{An example for illustrating the result of  Proposition \ref{prsEmbed}: In part (a), the CL arrangement is presented and $f_i$ are the faces contained in $H_1$ in the face set $\LL(\A)$. In part (b), we present an arrangement of points on a line which can be thought of as the restricted arrangement $\A_{H_1}$, and $x_i$ are faces in the face set $\LL_0(\A_{H_1})$. Part (c) presents the faces $a,b,c$ contained in $H_4$ (needed for the proof of Proposition \ref{prsEmbed}).}\label{ExamResEmb}
\end{figure}

    Let $H = H_1$. Then, the corresponding LRBs are:
$$  \LL_0(\A_H) = \left\{
\begin{array}{c}
i_H(x_1)=(-,-),\ i_H(x_2)=(0,-),\ i_H(x_3)= (+,-),\\
i_H(x_4)= (+,0),\ i_H(x_5)= (+,+)
\end{array} \right\},$$
and in a table form in Table \ref{table_B1},
\begin{table}[!ht]
$$\begin{array}{|c||c|c|}
\hline
 & (i_H(x_i))_1 & (i_H(x_i))_2 \\
\hline
\hline
i_H(x_1) & - & - \\
i_H(x_2) & 0 & - \\
i_H(x_3) & + & - \\
i_H(x_4) & + & 0 \\
i_H(x_5) & + & + \\
\hline
\end{array}$$
\caption{$\LL_0(\A_H)$ in a table form}\label{table_B1}
\end{table}
and
$$ \LL_0(\A)|_H = \varphi(\LL_0(\A_H)) = \left\{
\begin{array}{c}
i_{\A}(f_1)=(0,+ ,+,+ ),\ i_{\A}(f_2)=(0,0 ,+,+ ),\\
i_{\A}(f_3)= (0,- ,+,+ ),\ i_{\A}(f_4)= (0,- ,0,+ ),\\
i_{\A}(f_5)= (0,- ,+,+ )
\end{array} \right\},$$
and in a table form in Table \ref{table_B2}.
\begin{table}[!ht]
$$\begin{array}{|c||c|c|c|c|}
\hline
 & (i_{\A}(f_j))_1 & (i_{\A}(f_j))_2 & (i_{\A}(f_j))_3 & (i_{\A}(f_j))_4 \\
\hline
\hline
i_{\A}(f_1) & 0 & + & + & + \\
i_{\A}(f_2) & 0 & 0 & + & + \\
i_{\A}(f_3) & 0 & - & + & + \\
i_{\A}(f_4) & 0 & - & 0 & + \\
i_{\A}(f_5) & 0 & - & + & + \\
\hline
\hline
 & 0 & -(i_H(x_i))_1 & ((i_H(x_i))_2)^2 & + \\
\hline
\end{array}.$$
\caption{$\LL_0(\A)|_H$ in a table form, where in the last row, the relations to
$\LL_0 (\A_H)$ are presented.}\label{table_B2}
\end{table}

\medskip

We deal with $\varphi(\LL_0(\A_H))$ coordinate by coordinate:
\begin{enumerate}
\item First, note that $(\varphi(\LL_0(\A_H)))_1  = 0$ (property (1) of the proposition). In the table form, all the values in the first column of Table \ref{table_B2} are $0$.
\item Since $H_2 \cap H = \{p_1\}$ where $m_1 = {\rm multi}(p_1)=1$, then by property (2)(b) of the proposition,
   $$(\varphi(\LL_0(\A_H)))_2  = -(\LL_0(\A_H))_1$$
   (note the scalar multiplication by ($-1$)). In the table form, the second column of Table  \ref{table_B2} is equal to the first column of Table \ref{table_B1} multiplied by the scalar $(-1)$.
\item Since $H_3 \cap H  = \{p_2\}$, where $m_2 = {\rm multi}(p_2)=2$, then again by property (2)(b) of the proposition,
   $$(\varphi(\LL_0(\A_H)))_3  = ((\LL_0(\A_H))_2)^2.$$
   In the table form, the values in the third column of Table  \ref{table_B2} are equal to the square of the corresponding values of the second column of Table \ref{table_B1}.
\item Since $H_4 \cap H  = \emptyset$, then $(\varphi(\LL_0(\A_H)))_4  = +$ (by property (2)(a) of the proposition). In the table form, all the values in the fourth column of Table \ref{table_B2} are $+$.
\end{enumerate}
\medskip

\item Relabel the CL arrangement in Figure \ref{ExamResEmb}(a), such that the conic will be now labeled as $H_1$, see Figure \ref{ExamResEmb2}(a).
\begin{figure}[h]
\epsfysize=5cm \centerline{\epsfbox{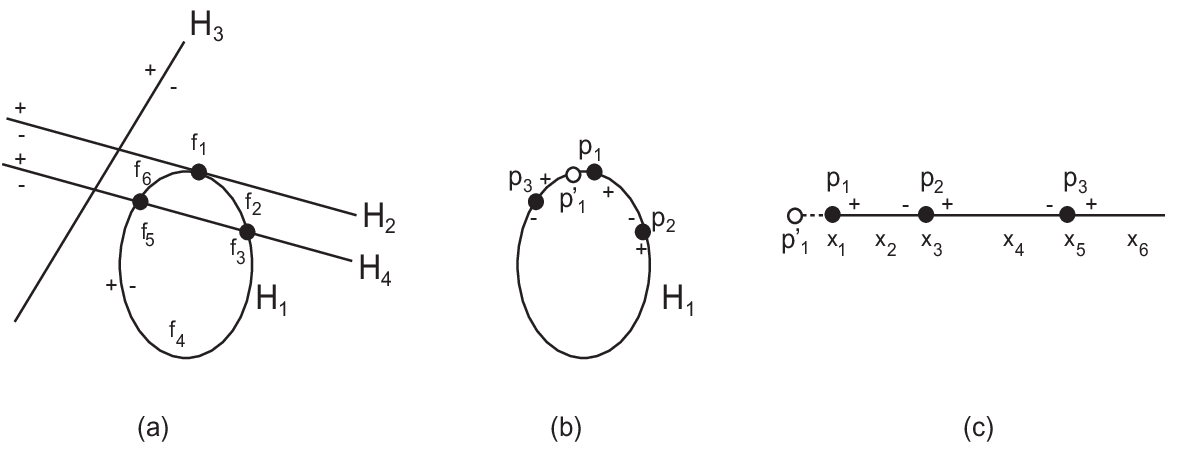}}
\caption{Another example for illustrating the result of Proposition \ref{prsEmbed}:
$f_i$ are the faces contained in the conic $H_1$ in the face set $\LL(\A)$ (see part (a)). $x_i$ are the faces of the face set $\LL_0(\A_{H_1})$ (see part (c)). The three parts illustrate the process of associating an LRB structure to the conic $H_1$. First, we remove a point $p_1'$ from $H_1$ to the left of $p_1$ (see part (b)). Then, we consider $H_1$ as a section with this  point deleted, i.e. a segment which starts from $p_1$ (see part (c)).}\label{ExamResEmb2}
\end{figure}

The faces of $\A_{H_1}$  are denoted by $x_1,\ldots,x_6$ (see Figure \ref{ExamResEmb2}(c); note that the section between $p'_1$ and $p_1$ is ignored) and their corresponding faces in the CL arrangement $\A$ are denoted by $f_1,\ldots,f_6$ (see Figure \ref{ExamResEmb2}(a)).
Let $H = H_1$. As was explained in Definition \ref{defLRBbounded}, one can induce an LRB structure on $\A_H$. Then, the corresponding LRBs are:
$$\LL_0(\A_H) = \left\{
\begin{array}{c}
i_H(x_1)=(0,-,-),\ i_H(x_2)=(+,-,-),\ i_H(x_3)=(+,0,-),\\
i_H(x_4)=(+,+,-),\ i_H(x_5)=(+,+,0),\ i_H(x_6)=(+,+,+)
\end{array} \right\}$$
and in a table form in Table \ref{table_C1},
\begin{table}[!ht]
$$\begin{array}{|c||c|c|c|}
\hline
 & (i_H(x_i))_1 & (i_H(x_i))_2 & (i_H(x_i))_3 \\
\hline
\hline
i_H(x_1) & 0 & - & - \\
i_H(x_2) & + & - & - \\
i_H(x_3) & + & 0 & - \\
i_H(x_4) & + & + & - \\
i_H(x_5) & + & + & 0 \\
i_H(x_6) & + & + & + \\
\hline
\end{array}$$
\caption{$\LL_0(\A_H)$ in a table form}\label{table_C1}
\end{table}
and
$$\LL_0(\A)|_H = \varphi(\LL_0(\A_H)) = \left\{
\begin{array}{c}
i_{\A}(f_1)=(0,0,-,+ ), \ i_{\A}(f_2)= (0,- ,-,+ ),\\
i_{\A}(f_3)= (0,- ,-,0 ),\ i_{\A}(f_4)= (0,- ,-,- ),\\
i_{\A}(f_5)= (0,- ,-,0),\ i_{\A}(f_6)=(0,-,-,+)
\end{array} \right\}$$
and in a table form in Table \ref{table_C2}.
\begin{table}[!ht]
$$\begin{array}{|c||c|c|c|c|}
\hline
 & (i_{\A}(f_j))_1 & (i_{\A}(f_j))_2 & (i_{\A}(f_j))_3 & (i_{\A}(f_j))_4 \\
\hline
\hline
i_{\A}(f_1) & 0 & 0 & - & + \\
i_{\A}(f_2) & 0 & - & - & + \\
i_{\A}(f_3) & 0 & - & - & 0 \\
i_{\A}(f_4) & 0 & - & - & - \\
i_{\A}(f_5) & 0 & - & - & 0 \\
i_{\A}(f_6) & 0 & - & - & + \\
\hline
\hline
 & 0 & -((i_H(x_i))_1)^2 & - & (i_H(x_i))_2 \cdot (i_H(x_i))_3 \\
\hline
\end{array}$$
\caption{$\LL_0(\A)|_H$ in a table form, where in the last row, the relations to
$\LL_0 (\A_H)$ are presented.}\label{table_C2}
\end{table}

\medskip

We deal with $\varphi(\LL_0(\A_H))$ coordinate by coordinate:
\begin{enumerate}
\item First, note that $(\varphi(\LL_0(\A_H)))_1  = 0$ (property (1) of the proposition). In the table form, all the values in the first column of Table \ref{table_C2} are $0$.
\item Since $H_2 \cap H = \{p_1\}$, where $m_1={\rm multi}(p_1)=2$, then by property (2)(b) of the proposition, we have: $(\varphi(\LL_0(\A_H)))_2  = -((\LL_0(\A_H))_1)^2$ (note the scalar multiplication by $(-1)$). In the table form, the values in the second column of Table  \ref{table_C2} are equal to the square of the corresponding values in the first column of Table \ref{table_C1} multiplied by the scalar $(-1)$.
\item Since $H_3 \cap H  = \emptyset$, then $(\varphi(\LL_0(\A_H)))_3  = -$ (by property (2)(a) of the proposition). In the table form, all the values in the third column of Table \ref{table_C2} are $-$.
\item Since $H_4 \cap H  = \{p_2,p_3\}$, where
$$m_2={\rm multi}(p_2)=m_3={\rm multi}(p_3)=1,$$
then by property (2)(b) of the proposition,
$$(\varphi(\LL_0(\A_H)))_4  = (\LL_0(\A_H))_2 \cdot (\LL_0(\A_H))_3.$$
In the table form, the values in the fourth column of Table \ref{table_C2} are equal to the {\em usual} multiplication of the corresponding values in the second and the third columns of Table \ref{table_C1}.
\end{enumerate}

\end{enumerate}
}
\end{example}

\begin{remark}
{\rm Note that if  every singular point is locally a transversal intersection of several components (as in the case, for example, of a line arrangement), then one can easily see that Proposition \ref{prsEmbed} is indeed a generalization of Lemma \ref{lemLineArrLRB}.}
\end{remark}

\begin{proof}[Proof of Proposition \ref{prsEmbed}]
We start by proving that $\varphi$ is a bijection, which is not an isomorphism. As in the proof of Lemma \ref{lemLineArrLRB}, $\varphi$ is injective, since  if there were two faces in $\LL_0(\A_H)$ sent to the same face in $\LL_0(\A)|_H$, that would have meant that these two different faces on $H$ have the same mutual position with respect to all the other components $H_2,\dots,H_m$, which is impossible. By the fact that $\sharp (\LL_0(\A)|_H) = \sharp \LL_0(\A_H)$, we get that $\varphi$ is bijection.

On the other hand, we show now that $\varphi$ is not necessarily a homomorphism. We use the same notations introduced in Remark \ref{remNotationI}. Let $\A$ be the CL arrangement presented in Figure \ref{ExamResEmb}(a) above, and let $H=H_4$. Let $a$ and $c$ be the intersection points of $H$ with the conic and let $b$ be the 1-dimensional segment between them (see also Figure \ref{ExamResEmb}(c)). When considering $a,b,c$ as faces of $\A_H$, then in $\LL_0(\A_H)$, $i_H(a)  i_H(c) = i_H(b)$. However, when considering $a,b,c$ as faces of $\A$ (see Figure \ref{ExamResEmb}(a)),  $i_\A(a)$ and $i_\A(c)$ have a zero value in the coordinate corresponding to the conic. However,  $i_\A(b)$ does not have a zero value in that coordinate (since $b$ is not contained in the conic). Thus, in $\LL_0(\A)$:
$\varphi(i_H(a))  \varphi(i_H(c))=i_\A(a)  i_\A(c) \neq i_\A(b) = \varphi(i_H(b))$.

\medskip

Now we pass to the proof of the two properties of $\varphi$.
The proofs of properties (1) and (2)(a) are  identical to the corresponding proofs in Lemma \ref{lemLineArrLRB}.

We now prove property (2)(b). Let $j>1$ and assume that $H \cap H_j = \{p_i \ |\ i \in K_j \}$. Let $c$ be a face of $\A$ which lays in the component $H$. Note that if $c= \{ p_k \}$ for $k\in K_j$, then $(i_H(c))_k=0$ in
$\LL_0(\A_H)$ and $(i_\A(c))_j=0$ in $\LL_0(\A)|_H = \varphi(\LL_0(\A_H)) \subset \LL_0(\A)$; thus property (2)(b) is satisfied for a $0$-dimensional face $c$.

Therefore, we can assume that the face $c$ has dimension $1$. Then, either $c \subset \{f_j>0\}$ or $c \subset \{f_j<0\}$. We claim that $i_{\A}(c)$ is determined by the mutual position of $c$ with respect to the singular points $\{p_i \ |\ i \in K_j \}$: the (usual) product of the signs (of the $i^{\rm th}$ coordinate of $\LL_0(\A_H)$, where $i \in K_j$) determines whether $c$ is in $\{f_j>0\}$ or in $\{f_j<0\}$. We check explicitly all the possible cases:
\begin{enumerate}
 \item If $H \cap H_j = \{p_i\}$ is a single transversal intersection point ($m_i={\rm multi}(p_i)=1$), then, as $H$ and $H_j$ have only  one intersection point in $\RR^2$, we can proceed as in the proof of property (2)(b) in Lemma \ref{lemLineArrLRB}.
 \item If $H \cap H_j = \{p_i\}$ is a single tangent point ($m_i={\rm multi}(p_i)=2$), then we claim that the $j^{\rm th}$ coordinate of $\varphi(\LL_0(\A_H))$ is constant: either $+$ or $-$ (except for the face $x= \{ p_i \}$, whose sign in the $j^{\rm th}$ coordinate is $0$, as was described above for the case of a $0$-dimensional face). This is since  $H$ is either entirely  outside or entirely inside the domain $\{f_j > 0\}$, and the value of the $j^{\rm th}$ coordinate is determined by the signs attached to the two domains of the plane divided by the curve $H_j$. In the first case $(i_\A(c))_j=+$ and in the second case $(i_\A(c))_j=-$. Also, in any case, $((i_H(c))_i)^2=+$ and thus we proved that $(i_\A(c))_j= \pm ((i_H(c))_i)^2 = \{\pm 1\}$, thus the $j^{\rm th}$ coordinate of $\varphi(\LL_0(\A_H))$ is indeed constant.

  \item Generalizing cases (1) and (2), if $H \cap H_j = \{p_i\}$ is a single singular point of multiplicity $m_i={\rm multi}(p_i)>2$, then we are only interested in the {\it parity} of $m_i$.  If $m_i$ is even, then locally at $p_i$, the curve $H$ does not cross $H_j$ to its ``other side'' (i.e. it is contained only in the domain $\{ f_j \geq 0 \}$ or in the domain $\{ f_j \leq 0 \}$), and thus the treatment of this case is as in case (2), where $m_i=2$. If $m_i$ is odd, then locally at $p_i$, the curve $H$ does  cross $H_j$ to its ``other side'', and thus the treatment of this case is as in case (1), where $m_i=1$.

  \item Assume now that $H \cap H_j = \{p_{s_1},p_{s_2}\}$ is two transversal intersection points (i.e. $m_{s_1}={\rm multi}(p_{s_1})=m_{s_2}={\rm multi}(p_{s_2})=1$; for example, when $H$ is a conic and $H_j$ is a circle intersecting $H$ transversally in two points, see Figure \ref{exampleEmb}(1)(a)). Recall that the structure of the induced LRB  of a pointed real curve $C \cup \{p_1,\ldots,p_k\}$ (see Section \ref{subsec-3.2}) allows us to think
      on faces which are to the right (or to the left) of a point $p_i$, $1 \leq i \leq k$. Without loss of generality, we can assume that $p_{s_2}$ is to the right of $p_{s_1}$.

    \begin{figure}[h]
    \epsfysize=9cm \centerline{\epsfbox{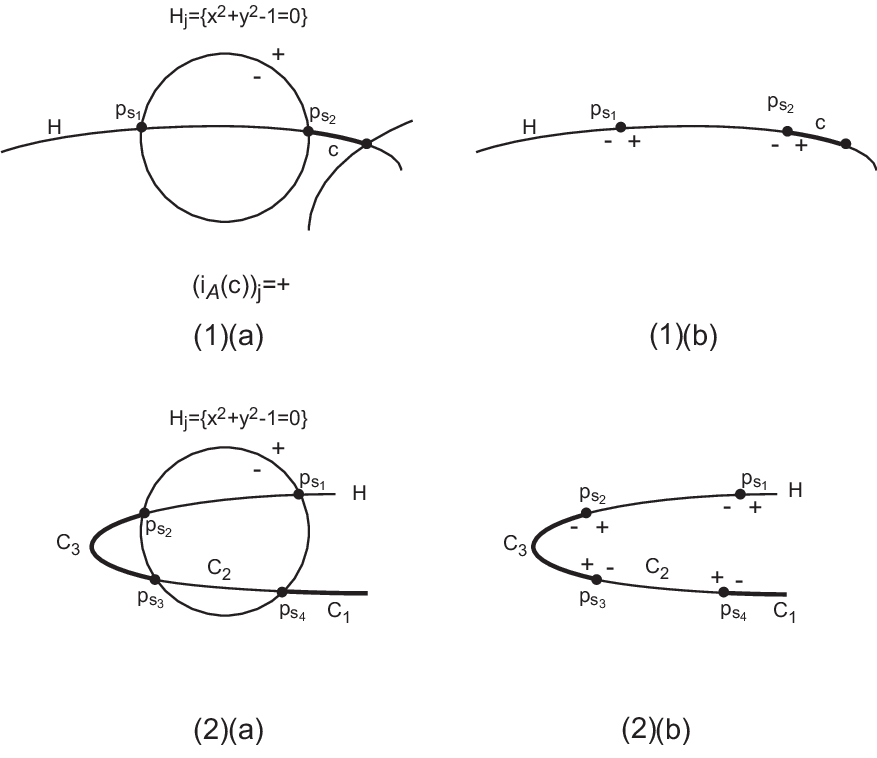}}
    \caption{\small Examples for the proof of cases (4) and (5) of Proposition \ref{prsEmbed}: Part (1)(a) illustrates the situation of case (4), and part (1)(b) presents the restricted arrangement $\A_H$. In this example, we have: $$(i_{\A}(c))_j=+=+ \cdot + = (i_H(c))_{s_1} \cdot (i_H(c))_{s_2}.$$ Part (2)(a) illustrates the situation of case (5), and part (2)(b) presents the restricted arrangement $\A_H$. In this example, we have:
    $$(i_{\A}(c_1))_j= + = + \cdot + \cdot + \cdot += (i_H(c_1))_{s_1} \cdot (i_H(c_1))_{s_2}\cdot (i_H(c_1))_{s_3} \cdot (i_H(c_1))_{s_4},$$
    $$(i_{\A}(c_2))_j= - = - \cdot + \cdot + \cdot += (i_H(c_2))_{s_1} \cdot (i_H(c_2))_{s_2}\cdot (i_H(c_2))_{s_3} \cdot (i_H(c_2))_{s_4},$$
    $$(i_{\A}(c_3))_j= + = - \cdot - \cdot + \cdot += (i_H(c_3))_{s_1} \cdot (i_H(c_3))_{s_2}\cdot (i_H(c_3))_{s_3} \cdot (i_H(c_3))_{s_4}.$$ }\label{exampleEmb}
    \end{figure}

      If $H$ is an unbounded curve, then the fact that $c \subset \{f_j > 0\}$ is equivalent to the fact that $c$ is to the right of $p_{s_2}$ (as presented in Figures \ref{exampleEmb}(1)(a) and \ref{exampleEmb}(1)(b)) or to the  left of $p_{s_1}$. In the first case:
      $$(i_H(c))_{s_1} \cdot (i_H(c))_{s_2} = + \cdot + = + = (i_\A(c))_j.$$
      In the second case:
      $$(i_H(c))_{s_1} \cdot (i_H(c))_{s_2} = - \cdot - = + = (i_\A(c))_j.$$
      We use a similar argument when $c \subset \{f_j < 0\}$.

      If $H$ is a (bounded) oval, then, as described in Definition \ref{defLRBbounded}, one chooses a point $p$ infinitesimally-close to a point $p_i \in \{p_1,\ldots,p_k\}$. Thus, an LRB structure of the set of faces of $\A_H$ is induced independently of the choice of the point $p$, when looking on $H$ as a (bounded) segment (by Proposition \ref{prsLRBconic}). Therefore, we can use the same argument used in the case of an unbounded curve.

  \item  Generalizing  case (4), assume that $H \cap H_j = \{p_{s_1},\ldots,p_{s_n}\}$, i.e. the intersection of $H$ and $H_j$ is a transversal intersection of $n$ points ($m_{s_i} = {\rm multi}(p_{s_i})= 1$ for $1 \leq i \leq n$). An example for this case is when $H$ be a parabola and $H_j$ is a circle intersecting $H$ transversally at $4$ points, see Figure \ref{exampleEmb}(2)(a).

      Assume that $H$ is an unbounded connected curve and thus without loss of generality, we can numerate the points $\{p_{s_i}\}$ consecutively, such that the point $p_{s_n}$ will be the  rightmost point. Assume also that in $\LL_0(\A)$, the domain $\{f_j > 0\}$ induces the sign $+$ in the $j^{\rm th}$ coordinate. Let $c$ be a 1-dimensional face in $\A_H$. Assume now that $c$ is to the right of $p_{s_n}$ (e.g. the section $c_1$ in Figures \ref{exampleEmb}(2)(a) and \ref{exampleEmb}(2)(b)). Thus $$(i_H(c))_{s_1}\cdot \ldots \cdot (i_H(c))_{s_n}= +\cdot \ldots \cdot + = +$$ in $\LL_0(\A_H)$. In addition, if $c \subset \{f_j > 0\}$, then in $\LL_0(\A)$ (or, more specifically, in $\LL_0(\A)|_H$), $(i_{\A}(c))_j = +$ (otherwise, if $c \subset \{f_j < 0\}$, then $(i_{\A}(c))_j = -$).

      Now, if we  move to the consecutive 1-dimensional face $c'$, adjacent to $c$  (i.e. between $p_{s_n}$ and $p_{s_{n-1}}$; see e.g. the section $c_2$ in Figures \ref{exampleEmb}(2)(a) and \ref{exampleEmb}(2)(b)), then in $\LL_0(\A_H)$:
      $$\hspace{1.7cm}(i_H(c'))_{s_1}\cdot \ldots \cdot (i_H(c'))_{s_{n-1}} \cdot (i_H(c'))_{s_n}= \underbrace{ +\cdot  \ldots \cdot +}_{n-1 \text{ times }} \cdot - = -,$$
      while in $\LL_0(\A)$, as $c' \subset \{f_j < 0\}$ (if indeed $c \subset \{f_j > 0\}$), $(i_{\A}(c'))_j~=~-$. Note that if $c \subset \{f_j < 0\}$, then $c' \subset \{f_j > 0\}$, so $(i_{\A}(c'))_j = +$, i.e. it is a constant scalar multiplication by $\{ \pm 1\}$ of $\prod\limits_{v=1}^n (i_H(c'))_{s_v}$.

      In this way, we can proceed to the next adjacent 1--dimensional face and so on (e.g. the section $c_3$ in Figures \ref{exampleEmb}(2)(a) and \ref{exampleEmb}(2)(b)), till we have reached to the leftmost face, i.e. to the face to the left of $p_{s_1}$, proving property (2)(b) for this type of intersection.

      The treatment of the case when $H$ is a bounded oval is similar to the former case (see also case (4)).

  \item In other cases, i.e. when $H \cap H_j = \{p_{s_1},\ldots,p_{s_n}\}$ and $m_{s_i} = {\rm multi}(p_{s_i}) \geq 1$, then this case is treated as  case (5) (i.e. treating the faces sequentially, starting from the rightmost face and continuing to its adjacent face, and so on), combined with the insights of cases (1), (2) and (3).

\end{enumerate}
\end{proof}

\section{CL arrangements: Chamber counting}\label{secComb}

In this section, we generalize the chamber counting formula known for line arrangements, to the case of CL arrangements (Section \ref{subsecDelResCL}).
We start by recalling the chamber counting formula for hyperplane
arrangements (Section \ref{subsecDelRes}).

\subsection{Preliminaries: Chamber counting for line arrangements} \label{subsecDelRes}

Let $\A  = \{H_1,\dots, H_n \} \subset \RR^2$ be a line arrangement, and let $f_i \in \RR[x,y]$ be the corresponding forms of the lines. Let $L = L(\A)$ be the semi-lattice of nonempty intersections of elements of $\A$. The main reference for this subsection is \cite{OT}.

As before, given  $H\in \A$,
let $\A^H = \A - \{H\}$ be the  \emph{deleted arrangement} in $\RR^2$, and  let $\A_H = \left\{ K \cap H \ | \ K \in \A^H \right\}$ be the  \emph{restricted arrangement} in $H$. Let
${\mathcal C}(\A)$ be the set of chambers of $\A$, i.e. the connected components of $\RR^2 - \A$. Then,
Zaslavsky's chamber counting formula \cite{Z} states that:

\begin{equation} \label{eqnResDelHyperplane}
|{\mathcal C}(\A)|= \left| {\mathcal C} \left( \A^H \right) \right|+ \left| {\mathcal C} \left( \A_H \right) \right|.
\end{equation}

\begin{remark} \label{remAlgProofZasHyperplane}
\rm{
One can give a simple set-theoretic proof for this formula: Deleting a hyperplane $H$ from the arrangement $\A$ induces a surjection of LRBs  $f:\LL_0(\A) \to \LL_0 \left( \A^H \right)$, which deletes the coordinate corresponding to the hyperplane $H$. Thus, $|{\mathcal C}(\A)|$ is equal to the sum of the number of chambers of the deleted arrangement $\A^H$ plus the number of chambers which are identified by the map $f$. Given $C_1,C_2 \in {\mathcal C}(\A)$, note that $f(C_1)=f(C_2)$ if and only if $C_1$ and $C_2$ share a common codimension-1 face contained in $H$, i.e. a chamber of the restricted arrangement $\A_H$. Hence, the number of the identified chambers is equal to the number of the chambers of $\A_H$, and Equation (\ref{eqnResDelHyperplane}) follows.\footnote{\ This proof was introduced to us by an anonymous referee.}}
\end{remark}

Note that if we denote by $I(\A)$ the unique two-sided ideal of the LRB $\LL(\A)$ (which consists of the set of the chambers of $\A$), Equation (\ref{eqnResDelHyperplane}) is equivalent to the following equation:
$$|I(\A)|=\left| I \left( \A^H \right) \right|+ \left| I \left( \A_H \right) \right|.$$

\begin{remark}
{\rm Other restrictions on the combinatorics of  real and complex line arrangements can be found, for example, in Hirzebruch's seminal paper \cite{H}, but we do not deal with their generalizations here. Note also that Equation (\ref{eqnResDelHyperplane}) holds for any hyperplane arrangement and not only for line arrangements.}
\end{remark}

\subsection{Chamber counting for CL arrangements}\label{subsecDelResCL}
In this section, we develop an restriction-deletion formula for real CL arrangements (see Proposition \ref{prop-res-del-CL} below), since Equation (\ref{eqnResDelHyperplane}) for chamber counting does not hold anymore in this case. Additionally, we look at the connections between the amended formula we introduce and Zaslavsky's generalization \cite{Z2}.

\medskip

We start with a simple example: for the CL arrangement $\A$ appearing in Figure \ref{NoDelRes}, we have:
$$|\C(\A)|=4,\,\, \left| \C \left( \A^H \right) \right|=2,\,\,  \left| \C \left( \A_H \right) \right|=3 \,\,\Rightarrow\,\, |\C(\A)|\not= \left|\C \left( \A^H \right) \right|+ \left| \C \left( \A_H \right) \right|.$$
On the other hand:
$$\left| \C \left( \A^C \right) \right|=2,\,\, \left| \C \left( \A_C \right) \right|=2 \,\, \Rightarrow  \,\, |\C(\A)|=\left| \C \left( \A^C \right) \right|+\left| \C \left( \A_C \right) \right|.$$

\begin{figure}[h]
\epsfysize=2cm \centerline{\epsfbox{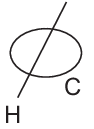}}
\caption{An example for illustrating the problem with the usual restriction-deletion formula (Equation (\ref{eqnResDelHyperplane})) for the case of CL arrangements:
$$4=|\C(\A)|\not= \left|\C \left( \A^H \right) \right|+ \left| \C \left( \A_H \right) \right|=2+3 =5.$$}\label{NoDelRes}
\end{figure}

Thus, the restriction-deletion formula has to be changed. In order to formulate this change accurately, we start by introducing some definitions.

\begin{definition} \label{defBound}
Let  $\A \subset \RR^2$ be a real CL arrangement.\\
(1) Let $H \in \A$. Define the function:
$${\rm bound} : \C(\A_H) \to \{Y \in P(\C(\A)) : |Y| = 2 \}$$
$${\rm bound}(E) = \{X_1,X_2\} \text{ where } E \subset \overline{X_1} \cap \overline{X_2},$$
where $P(\C(\A))$ is the power set of $\C(\A)$ and $\overline{X}$ is the (topological) closure of $X$.\\
(2) For $E_1,E_2 \in \C(\A_H)$, define the following equivalence relation $\sim$:
$$ E_1 \sim E_2 \,\,\Leftrightarrow \,\, {\rm bound}(E_1)  = {\rm bound}(E_2), $$
and define:
$$ b(H) = \C \left( \A_H \right) /\sim. $$
\end{definition}

\noindent
For example, for the CL arrangement presented in Figure \ref{DelResConic}, $|b(H)|=2$.
\begin{figure}[h]
\epsfysize=2cm \centerline{\epsfbox{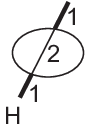}}
\caption{An illustration of the different elements in $b(H)$. The bold sections labeled by $1$ are identified in $b(H)$.}\label{DelResConic}
\end{figure}

\begin{prs}\label{prsBound_I_connection}
Let $H_j \in \A=\{H_1,\dots,H_n\}$ be a component  of a real CL arrangement $\A$, and let $E_1,E_2 \in \C(\A_{H_j})$. Then:
$${\rm bound}(E_1)  = {\rm bound}(E_2) \quad \Rightarrow \quad i(E_1) = i(E_2).$$
\end{prs}

\begin{proof}
Let $p_i \in E_i$, where $i \in \{ 1,2 \}$, be two generic points. Since ${\rm bound}(E_1)  = {\rm bound}(E_2)$, there exists a path $P:[0,1] \to \RR^2$, such that $P(0)=p_1, P(1) = p_2$ and $P(t) \cap \A = \emptyset$ for $0 < t < 1$, otherwise any path $P$ starting at $p_1$ would have to pass to another 2-dimensional face in order to reach $p_2$, and thus ${\rm bound}(E_1)  \neq {\rm bound}(E_2)$ (see Figure \ref{exampleBound}).
Note that there may exist a path that connects the points $p_1$ and $p_2$, and intersects another 2-dimensional face; however this does not necessarily mean that ${\rm bound}(E_1)  \neq {\rm bound}(E_2)$.

\begin{figure}[h]
\epsfysize=4cm \centerline{\epsfbox{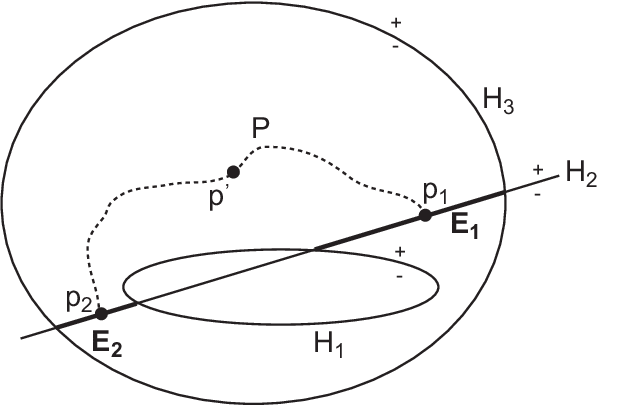}}
\caption{An example illustrating the proof of Proposition \ref{prsBound_I_connection}: we have $E_1 \sim E_2$. Hence, for any choice of points $p_1 \in E_1$ and $p_2 \in E_2$, we have: $i(p_1) = i(p_2) = (+,0,-)$, and for any point $p'$ on the path $P$ which connects $E_1$ and $E_2$, we have: $i(p') = (+,+,-)$.}\label{exampleBound}
\end{figure}

The point $P(0)=p_1 \in H_j$ is on a $1$-dimensional face $E_1 \subseteq H_j$ and hence only one entry in its vector of signs $i(p_1)$ is zero. Therefore,
if $i(p_1) = (a_1,\dots,a_{j-1},0,a_{j+1},\dots,a_n)$ where $0 \neq a_i \in L^1_2$ for $i \neq j$, then for every $0 < \varepsilon \ll 1$, $i(P(\varepsilon)) = (a_1,\dots,a_{j-1},a_j,a_{j+1},\dots,a_n)$ where $0 \neq a_j \in L^1_2$, since all the points which are in the neighbourhood of $p_1$ have the same mutual position with respect to the other components $H_i\in \A, i\neq j$, as $p_1$. By  Remark \ref{rem2.7}, we have: $i(P(\varepsilon)) = i(P(1-\varepsilon))$. Since $p_2 \in H_j$, then $(i(p_2))_j = 0$ and all the other entries of $i(p_2)$ are the same as $i(P(1-\varepsilon))$; hence $i(p_2) = i(p_1)$.
\end{proof}

The opposite direction  of Proposition \ref{prsBound_I_connection} is not correct, as can be seen in the CL arrangement presented in Figure \ref{exmplBoundSign}. Although the two faces $A,B$ have the same vector of signs: $i(A) = i(B) = (0,+,+)$, we have:
$$\{C_1,C_3\}={\rm bound}(A)  \neq {\rm bound}(B) = \{C_1,C_2\},$$
where $C_1,C_2,C_3$ are $2$-dimensional faces.

\begin{figure}[h]
\epsfysize=4cm \centerline{\epsfbox{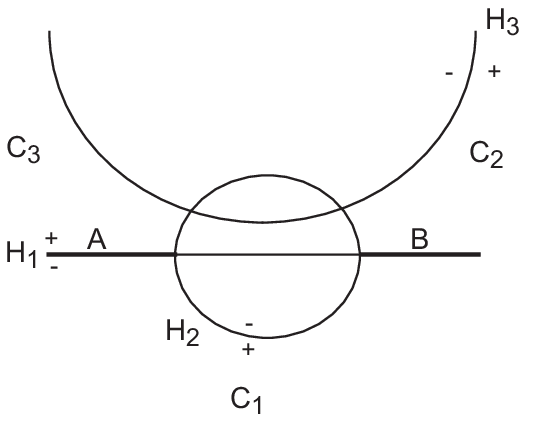}}
\caption{An example showing that $i(A) = i(B)$ does not imply ${\rm bound}(A)  = {\rm bound}(B)$.}\label{exmplBoundSign}
\end{figure}

\begin{remark}\label{remConnec}
{\rm For a CL arrangement $\A$, let  $E_1,E_2 \in \C(\A_H)$ be two different (1-dimensional open) faces in $H  \in \A$. The component $H$ divides the plane into two regions: $R_1$ and $R_2$. Assuming that $ E_1 \sim E_2$, then by definition: ${\rm bound}(E_1)  = {\rm bound}(E_2)$, which means that there exists {\it two} paths $P_i:[0,1] \to \RR^2$ for $i \in \{1,2\}$,
such that $P_1(0) = P_2(0) \in E_1 $ and $P_1(1) = P_2(1) \in E_2$ and for $0 < t <1$, $P_1(t) \cap \A = P_2(t) \cap \A = \emptyset$, but
$P_1(t) \subset R_1$ and $P_2(t) \subset R_2$ (see Figure \ref{exampleBound2}(a)).}
\end{remark}

\begin{figure}[h]
\epsfysize=5cm \centerline{\epsfbox{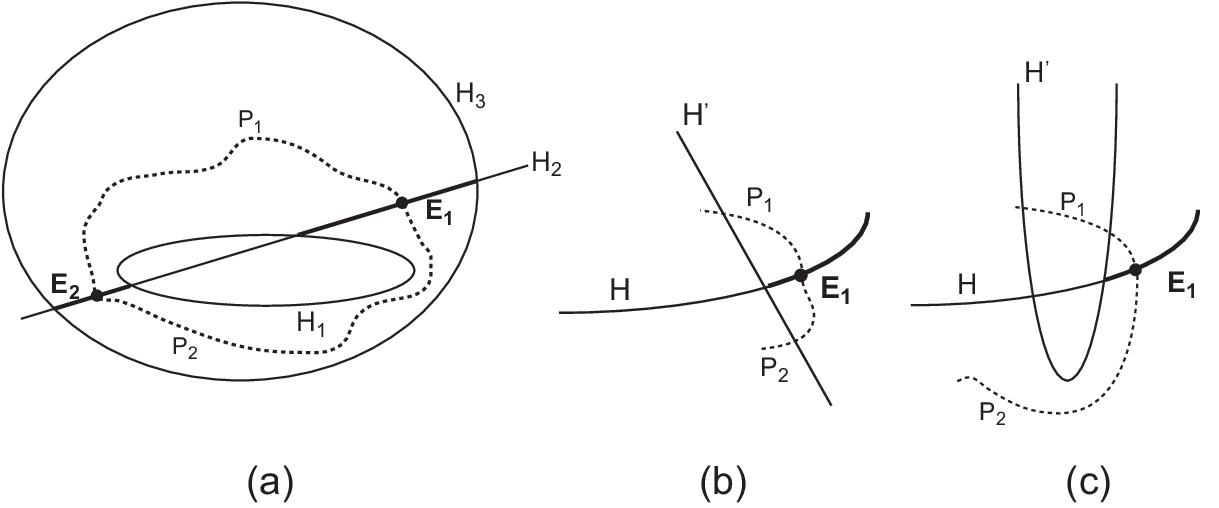}}
\caption{Part (a) is an example for Remark \ref{remConnec} regarding the equivalence of the sections $E_1$ and $E_2$. Parts (b) and (c) are examples for the proof of Proposition \ref{rem-1.3}(2).}\label{exampleBound2}
\end{figure}

\begin{prs}\label{rem-1.3}
Let $\A$ be a CL arrangement and $H \in \A$. Then:
\begin{enumerate}
\item $|b(H)| \leq |\C \left( \A_H \right)|$.
\item If $\A$ consists only of lines and parabolas, then $|b(H)| = |\C \left( \A_H \right)|$.
\end{enumerate}
\end{prs}

\begin{proof}
(1) Obvious.\\
(2) We have to show that for every two sections $E_1,E_2 \in \C(\A_H)$, we have: $E_1 \not\sim E_2$. Taking into account the notations of Remark \ref{remConnec}, we assume by contradiction that $E_1 \sim E_2$ and hence the two paths $P_1,P_2$ mentioned in Remark \ref{remConnec} exist. Then, at least one of the boundary points of the face $E_1$ (i.e. an intersection point of two components of $\A$) is also contained in another component $H'$, intersecting $H$ (see Figures \ref{exampleBound2}(b) and \ref{exampleBound2}(c)). Since $H'$ is either a line or a parabola, this means the following: if $H'$ is a line, every two optional paths $P_1$ and $P_2$ would have to intersect $H'$ if we wish to connect $E_1$ and $E_2$ (see Figure \ref{exampleBound2}(b)). If $H'$ is a parabola, there would exist only one path (e.g. $P_1$) that would connect $E_1$ and $E_2$ without intersecting $H'$, but the second path would have to intersect it (see Figure \ref{exampleBound2}(c)). Hence, in any case, for every $E_1,E_2 \in \C(\A_H)$, we have: $E_1 \not\sim E_2$, since crossing an unbounded component will necessarily change the chamber. So we have: $|b(H)| = |\C \left( \A_H \right)|$ for any component $H \in \A$.
\end{proof}

Here one can pose the following question:
\begin{question}
Is there a necessary and sufficient condition on a CL arrangement $\A$ and a component $H \in \A$ such that $|b(H)| = |\C(\A_H)|$?
\end{question}

Now, we are ready to present the generalized restriction-deletion chamber counting formula for CL arrangements:

\begin{prs}\label{prop-res-del-CL}
Let $H \in \A$ be a component of a real CL arrangement $\A$. Then:
$$|\C(\A)|=|\C \left( \A^H \right)|+|b(H)|.$$
\end{prs}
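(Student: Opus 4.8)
The plan is to mimic the set-theoretic argument sketched in Remark \ref{remAlgProofZasHyperplane} for hyperplane arrangements, adapting it to account for the fact that deleting a component $H$ of a CL arrangement may identify \emph{several} chambers of $\A$ into a single chamber of $\A^H$, rather than exactly two. First I would observe that removing $H$ from $\A$ gives a natural surjective map $f : \C(\A) \to \C(\A^H)$: each chamber $X$ of $\A$ lies in a unique chamber $f(X)$ of $\A^H$ (the one containing $X$ after we erase the curve $H$). Hence $|\C(\A)| = |\C(\A^H)| + N$, where $N = |\C(\A)| - |\C(\A^H)|$ counts the "excess," i.e. $N = \sum_{Y \in \C(\A^H)} (|f^{-1}(Y)| - 1)$.

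Next I would analyze the fibers of $f$. A chamber $Y \in \C(\A^H)$ is subdivided by $H$ into some number $r_Y \geq 1$ of chambers of $\A$; these are exactly the elements of $f^{-1}(Y)$, and $H \cap Y$ consists of $r_Y - 1$ connected one-dimensional pieces, each of which is a chamber $E$ of the restricted arrangement $\A_H$ lying inside $Y$. Conversely every chamber $E \in \C(\A_H)$ lies in the closure of exactly two chambers of $\A$ — this is precisely the content of the map ${\rm bound}: \C(\A_H) \to \{Y \subset \C(\A) : |Y| = 2\}$ of Definition \ref{defBound} — and those two chambers map to the same $Y$ under $f$. So $N$, the total number of "extra" chambers created by reinserting $H$, equals the number of chambers of $\A_H$ that are \emph{effective cuts}, i.e. that genuinely separate one chamber of $\A^H$ into two. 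The key combinatorial point is then: $N$ equals the number of distinct \emph{pairs} $\{X_1, X_2\}$ that arise as ${\rm bound}(E)$ for some $E \in \C(\A_H)$; equivalently, $N = |b(H)|$, since $b(H) = \C(\A_H)/\!\sim$ identifies precisely those arcs $E$ that bound the same pair of chambers.

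To make the identification $N = |b(H)|$ rigorous I would argue as follows. Within a fixed chamber $Y \in \C(\A^H)$, the arcs of $H \cap Y$ that separate $Y$ cut it into $r_Y$ pieces; two such arcs $E, E'$ inside the same $Y$ bound the same pair of chambers of $\A$ iff they are "parallel adjacent" cuts with no third chamber between them — but in fact, since $Y$ is a chamber (hence connected and, being a region of a curve arrangement, simply connected in the relevant sense), distinct separating arcs of $H$ inside $Y$ always bound distinct pairs: if $E$ separates $Y$ into $A \sqcup B$ and $E'$ separates it into $A' \sqcup B'$ with $\{A,B\}=\{A',B'\}$ as chambers of $\A$, then $E$ and $E'$ lie in the closure of the same two chambers, forcing them to coincide (two disjoint arcs both on the common boundary of the same two 2-faces must be the same 1-face). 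Hence inside each $Y$ the map $E \mapsto {\rm bound}(E)$ is injective on the $r_Y - 1$ separating arcs, and across different $Y$'s the images are automatically distinct (the pairs ${\rm bound}(E)$ consist of chambers mapping to $Y$, so they are disjoint for distinct $Y$). Therefore $|b(H)| = \sum_Y (r_Y - 1) = N$, and substituting into $|\C(\A)| = |\C(\A^H)| + N$ gives the claimed formula.

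The main obstacle I anticipate is making precise the claim that every chamber $E$ of $\A_H$ bounds exactly two chambers of $\A$ and that distinct separating arcs within one chamber of $\A^H$ bound distinct pairs — i.e. justifying that the function ${\rm bound}$ is well-defined and behaves as needed. For line (hyperplane) arrangements this is immediate because each codimension-one face sits on the common boundary of exactly two chambers; for CL arrangements one must be slightly careful near tangency points and other singular points of $\A_H$, and one should note that $E$ here is a chamber of the \emph{restricted} arrangement $\A_H$ (a maximal arc of $H$ not meeting the other components), so it is a connected piece of a smooth curve and its small two-sided tubular neighborhood meets exactly two faces of $\A$. Once this local picture is secured, the global count above goes through verbatim. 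I would also remark, as a sanity check matching Remark \ref{rem-1.3}(2), that when $\A$ is a line arrangement every arc of $\A_H$ is a separating cut bounding a distinct pair, so $b(H) = \C(\A_H)$ and the formula specializes to Zaslavsky's Equation (\ref{eqnResDelHyperplane}).
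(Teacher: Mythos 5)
Your global decomposition is the same as the paper's: write $|\C(\A)|=|\C(\A^H)|+\sum_Y(r_Y-1)$ over the chambers $Y$ of $\A^H$ that $H$ meets, and then identify the excess $\sum_Y(r_Y-1)$ with $|b(H)|$. The gap is in the second step, and it is not a technicality: two of the claims you use to establish it are false, and they are false precisely in the situation that Definition \ref{defBound} was invented to handle. First, $H\cap Y$ need not consist of $r_Y-1$ arcs. In the arrangement of Figure \ref{NoDelRes} (a line $H$ crossing a circle), the exterior chamber $Y$ of the circle meets $H$ in \emph{two} rays, yet $r_Y=2$, so $r_Y-1=1$. Second, and more seriously, your injectivity argument --- ``two disjoint arcs both on the common boundary of the same two $2$-faces must be the same $1$-face'' --- is exactly the statement that fails for CL arrangements: in the same example the two rays are disjoint arcs of $\A_H$ with ${\rm bound}(E)={\rm bound}(E')=\{Y_{\rm above},Y_{\rm below}\}$ (see also Figure \ref{DelResConic}, where the two bold sections are identified in $b(H)$). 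If your claim were correct, the relation $\sim$ would be trivial, $b(H)$ would equal $\C(\A_H)$, and Proposition \ref{prop-res-del-CL} would collapse to the hyperplane formula $|\C(\A)|=|\C(\A^H)|+|\C(\A_H)|$, which the paper's own example shows is false. Your fallback notion of ``effective cuts'' does not repair this: in the example neither ray individually disconnects $Y$, so the set of arcs that ``genuinely separate one chamber into two'' has the wrong cardinality as well.

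What is actually needed, and what the paper supplies, is an argument that the number of \emph{equivalence classes} of arcs of $H\cap Y$ equals $r_Y-1$, without assuming either that each arc separates or that distinct arcs bound distinct pairs. The paper does this by ordering the arcs of $H\cap Y$ consecutively along $H$, proving that consecutive equivalence classes $H_i,H_{i+1}$ satisfy $|\bound(H_i)\cap\bound(H_{i+1})|=1$ (using connectivity of $Y$ to rule out $|\bound(H_i)\cap\bound(H_{i+1})|=0$, as in Figure \ref{FourChambers}), and then building from this chain an injection $\ell_Y:b(H\cap Y)\to\{X_1,\dots,X_{r_Y}\}$ that misses exactly one chamber. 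Some adjacency or chain argument of this kind is unavoidable; your proof as written does not contain one, so the key identity $\sum_Y(r_Y-1)=|b(H)|$ is not established.
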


Note that by Proposition \ref{rem-1.3}(2), Proposition \ref{prop-res-del-CL} is indeed a natural generalization of the situation for line arrangements to the case of real CL arrangements. It also means that the original Zaslavsky's chamber counting formula holds for CL arrangements consisting of only lines and parabolas.

\begin{proof} [Proof of Proposition \ref{prop-res-del-CL}]
For every chamber $X \in \C \left( \A^H \right)$ satisfying $H \cap X \neq \emptyset$, $H$
divides $X$ into  a certain number of  chambers;  we denote this number by $k_X$. Thus:
$$|\C(\A)|=|\C \left( \A^H \right)|+\sum_{\substack{X \in \C ( \A^H ) \\ H \cap X \neq \emptyset}} (k_X-1),$$
\noindent
since every chamber $X \in \C \left( \A^H \right)$ in the sum splits into
$k_X$ chambers, but we do not count $X$ itself, as it is already
counted in $\left| \C \left( \A^H \right) \right|$. For each $X \in \C \left( \A^H \right)$  in the sum,
denote:
$$X = \bigcup_{i=1}^{k_X} X_i,\, H_X = H \cap X,$$
that is, (the interior of) $X$ is divided into $k_X$ chambers $X_i$, whose union (of their closure) is (the closure of) $X$.

Note that $H_X$ is possibly a  union of disjoint sections, each of which is an element of $\C \left( \A_H \right)$. Therefore, we need to prove that $1+\left| b \left( H_X \right) \right|= k_X$. We numerate the sections of $H_X$ consecutively, which induces a numeration $E_1,E_2,\ldots$ of the {\em different} sections of $b \left( H_X \right)$ from right to left (note that $E_i$, $i \geq 1$, is an {\em equivalence class} of elements in $H_X$). For each $E_i \in b \left( H_X \right)$, $1 \leq i \leq \left| b \left( H_X \right) \right|$, we look at the pair $\bound(E_i) = \{X_{i_1}, X_{i_2}\}$; see Figure \ref{sectionOfH} for an example.

\begin{figure}[h]
\epsfysize=3.5cm \centerline{\epsfbox{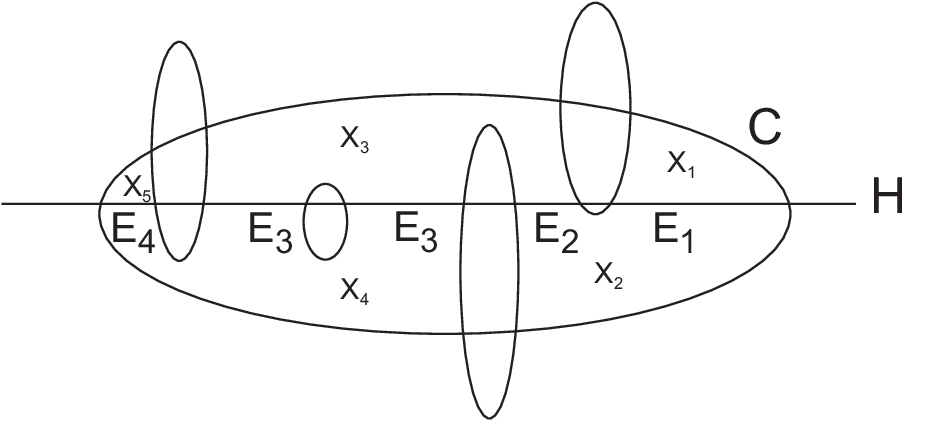}}
\caption{An example for the partition of $H_X$, where $X = X_1 \cup \cdots \cup X_5$ is a chamber bounded by the conic $C$ and $X_1,\ldots,X_5$ are the chambers whose union of their closure is the closure of $X$. The equivalence classes of the sections of $H_X = H \cap X$ are $E_1,\dots,E_4$.}\label{sectionOfH}
\end{figure}

We claim that for all $i$, we have $|\bound(E_i) \cap \bound(E_{i+1})|=1$. Indeed,
$$|\bound(E_i) \cap \bound(E_{i+1})|< 2,$$
otherwise $E_i = E_{i+1}$ by the definition of the equivalence relation.
Assume by contradiction that $|\bound(E_i) \cap \bound(E_{i+1})|=0$. This means that we have the  situation depicted in Figure \ref{FourChambers}.

\begin{figure}[h]
\epsfysize=2cm \centerline{\epsfbox{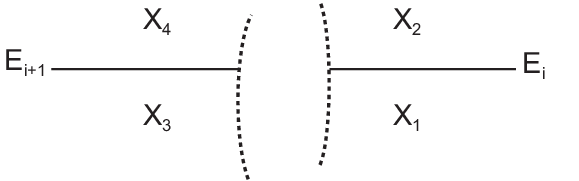}}
\caption{An illustration for the situation $|\bound(E_i) \cap \bound(E_{i+1})|=0$}\label{FourChambers}
\end{figure}

However, this situation is impossible, since these sections of $H_X$ are consecutive, and if $\{X_1,X_2\} \cap \{X_3,X_4\} = \emptyset$, then $E_i$ and $E_{i+1}$
will not dissect the same (single) chamber $X \in \C \left( \A^H \right)$ (since $X_1,X_2$ and $X_3,X_4$ will be contained in different chambers of $\C \left( \A^H \right)$) -- indeed, even before the equivalence relation $\sim$, one can connect a generic point  from $E_i$ with a generic point from $E_{i+1}$ by a continuous path which lays only in $X$, which mean that the above intersection is always non-empty (see also Remark \ref{rem2.7}).

Thus, we define recursively the  map
$\ell_X : b(H_X) \to \{X_1,\dots,X_{k_X}\}$ as follows:
Let $\bound(E_1) = \{X_1,X_2\}$. Assume, after possible renumeration, that $X_2 \in \bound(E_2)$ (as was proved above, it is not possible that either both $X_1$ and $X_2$ belong to $\bound(E_2)$ or both $X_1$ and $X_2$ do not belong to $\bound(E_2)$). Hence, we define  $\ell_X(E_1) = X_2$.
For $i>1$, define $\ell_X(E_i)$ to be the chamber $X'$ satisfying that $X' \in \bound(E_i)$ and for every $j<i$, $\ell_X(E_j) \neq X'$. This means that if $Y$ is a chamber such that there exists $k$ such that $Y \in \bound(E_k) \cap \bound(E_{k+1})$, then $Y \in {\rm Image}(\ell_X)$.
Note that for every $i>1$ there is only one way to choose $\ell_X(E_i)$ (recall that
$\left| \bound(E_i) \ \cap \ \bound(E_{i+1}) \right| =1$
for each $i$). By its definition, the map  $\ell_X$ is injective, hence $|b(H_X)| \leq k_X-1$ (since $X_1 \not\in {\rm Image}(\ell_X)$). Note that in CL arrangements it is not possible that $X_1$ will appear again as a boundary of a non-consecutive section of $H$. This might occur if one deals with curves of degree $4$ or higher.

For proving that $|b(H_X)| = k_X-1$, assume on the contrary that $|b(H_X)| < k_X-1$. Then there are at least two chambers in $\{X_1,\ldots, X_{k_X}\}$ which are not in the image of $\ell_X$. One of them is $X_1$ (by the construction above). Denote the other chamber by $X_j $, where $1 < j \leq k_X$.

Hence, there exists $t$ such that $1 < t \leq |b(H_X)|$ such that $X_j \in \bound(E_t)$. By the definition of bound and the discussion above, either $X_j \in \bound(E_{t-1}) \cap \bound(E_t), $ or $X_j \in \bound(E_t) \cap \bound(E_{t+1})$, or $X_j \in \bound \left( E_{|b(H_X)|} \right)$. In the first two cases, $X_j \in {\rm Image}(\ell_X)$ by the definition of $\ell_X$, which is a contradiction. In the last case, denote: $\bound\left(E_{|b(H_X)|}\right) = \{Y',Y''\}$. By the construction, one can assume that
$$Y' \in \bound \left(E_{|b(H_X)|-1}\right) \cap \bound\left(E_{|b(H_X)|}\right)  .$$
Hence, by the definition of $\ell_X$, $\ell_X\left(E_{|b(H_X)|-1}\right) = Y'$, and so we have: $Y' \in {\rm Image}(\ell_X)$ which implies that: $X_j = Y''$, and then necessarily by the definition of $\ell_X$, we have that: $$\ell_X\left(E_{|b(H_X)|}\right) = Y'' =X_j,$$
which is a contradiction. The contradiction implies that our assumption $|b(H_X)| < k_X-1$ was false, and hence $|b(H_X)|= k_X-1$.
\end{proof}

\begin{remark}
\rm{
A set-theoretic proof for Proposition \ref{prop-res-del-CL} can be given, which is a generalization of the one given in Remark \ref{remAlgProofZasHyperplane} for line arrangements. Still this proof is based on arguments given in the previous proof of Proposition \ref{prop-res-del-CL}.

As in the case of line arrangements, deleting a component $H$ from the arrangement $\A$ induces a surjection of LRBs  $f:\LL_0(\A) \to \LL_0(\A^H)$, which deletes the sign corresponding to the component $H$. Thus, the number of chambers of $\A$ is equal to the sum of the number of chambers of $\A^H$ plus the number of chambers which are identified by the map $f$. We have shown in the proof of Proposition \ref{prop-res-del-CL} that the map $\ell_X:b(H) \to \{X_2,\ldots,X_{k_X}\}$ is a bijection between the number of chambers which are identified by the map $f$ and the elements in $b(H)$, and therefore the result follows as in the proof presented in Remark \ref{remAlgProofZasHyperplane}.
}
\end{remark}

We conclude this section by several remarks.

\begin{remark}
{\rm By the same arguments we have used above, one can easily see that Proposition \ref{prop-res-del-CL} holds for CL arrangements in $\RR\pp^2$ too. However, the definition of the function ${\rm bound}$ in
Definition \ref{defBound}(1) should be changed as follows:
$${\rm bound} : \C(\A_H) \to \{Y \in P(\C(A)) : |Y| \leq 2 \}$$
$${\rm bound}(E) = \{X_1,X_2\} \text{ such that } E \subset \overline{X_1} \cap \overline{X_2} \text{ or } E \subset \overline{X_1}.$$}
\end{remark}

\begin{remark}
\rm{
Zaslavsky \cite[Theorem 7.1]{Z2} offers a different way for computing the number of chambers induced by a CL arrangement $\A$.
However, this formula computes the number of chambers directly and does not give a recursion as in Proposition \ref{prop-res-del-CL} (which is similar to his original chamber counting formula for line arrangements). Indeed, computing $|\C(\A)| - |\C(\A^H)|$ from Zaslavsky's formula for a CL arrangement will depend on whether $H$ is a line, a parabola or an ellipse. For example, if $H$ is a line or a parabola, then we get that: $$|\C(\A)| - |\C(\A^H)| = 1 + |\{v_j \in H\}| = |\C(\A_H)|,$$ where the set $\{v_j\}$ is the set of intersection points (in $L(\A)$) on $H$. Note that the rightmost equality is due to the fact that $H$ is a line or a parabola (by Proposition \ref{rem-1.3}(2)).
}
\end{remark}

\begin{remark}\label{res-del-rem}
\rm{
There are several other restriction-deletion theorems with respect to CL arrangements in other contexts, which might be connected to the generalization of Zaslavsky's restriction-deletion chamber counting formula. We survey some of them here.

Schenck and Tohaneanu \cite{ST} proved the existence of other restriction-deletion theorems with respect to the module of $\A$-derivations $D(\A)$ for a CL arrangement $\A$ (for its definition, see the Appendix in Section \ref{sec-app2} below). However, the connection between these theorems and the results we have obtained with respect to a deleted or restricted CL arrangement is not clear.

\medskip

First, the restriction-deletion theorems in \cite[Theorem 2.5  and Theorem 3.4]{ST} for the module of $\A$-derivations $D(\A)$  can be applied only for free quasihomogeneous (see \cite[Definition 1.6]{ST}) triples $\left( \A^H, \A, \A_H \right)$ (where $H \in \A$; note that line arrangements are always quasihomogeneous). However, the restriction-deletion proposition for chamber counting (see Proposition \ref{prop-res-del-CL}) works for all CL arrangements, and the restriction-deletion proposition for $\LL_0(\A)$ (see Proposition \ref{prsEmbed}) can be applied only when $\LL_0(\A)$ is an LRB.

Second, for deleting a component $H$, the restriction-deletion chamber counting  formula (Proposition \ref{prop-res-del-CL}) depends on the number of 1-dimensional faces in $\LL(\A)$ on this component which are equivalent under $\sim$, an equivalence relation
which does not appear in the restriction-deletion theorem
for $D(\A)$ for deleting a component (see \cite[Theorem 2.5]{ST}).

Moreover, while for line arrangements, the connection between these theorems is obvious, for CL arrangements the connection is more subtle. For a \emph{free} line arrangement $\LL$ (see its definition in the Appendix in Section \ref{sec-app2} below), the chamber counting formula can be induced by the restriction-deletion theorem with respect to $D(\LL)$: indeed, the addition-deletion formula for $D(\LL)$ implies the restriction-deletion formula for the characteristic
polynomial $\pi(\LL,t)$ \cite[Theorem 4.61]{OT} and since $\pi(\LL,1) = |{\mathcal C}(\LL)|$, the restriction-deletion formula for chamber counting follows. However, for a free quasihomogeneous CL arrangement $\A$, the connections between the different restriction-deletion theorems (for $D(\A)$, for $\pi(\A,t)$ and for $\C(\A)$) are not clear; for example, $\pi(\A,1)\neq |\C(\A)|$ even for the CL arrangement $\A$ which consists of a line intersecting transversally a conic. We leave this for further investigation.

Moreover, note that while the characteristic polynomial is combinatorially determined (for any arrangement of curves in $\CC^2$), the module of $\A$-derivations $D(\A)$ for a CL arrangement $\A$ is not: in \cite{ST}, a pair of combinatorially-equivalent CL arrangements having different modules of $\A$-derivations is presented.
}
\end{remark}

\section{Appendix}\label{secApp}
In this appendix, we introduce some of the notions related to hyperplane arrangements which were mentioned in the paper. All the material presented in this section can be found in \cite{OT}.

\subsection{The Poincar\'{e} polynomial and Zaslavsky's restriction-deletion theorem}\label{sec-app1}

Let $\A$ be a real hyperplane arrangement in $V=\RR^n$ and let $L=L(\A)$ be its intersection lattice. Define the {\em M\"obius function} $\mu_\A=\mu: L \times L \to \ZZ$ as follows:
$$\left\{ \begin{array}{rcl}
\vspace{5pt}\mu(X,X)=1 & & \mbox{if } X \in L, \\
\vspace{5pt}\sum\limits_{X \leq Z \leq Y} \mu(X,Z)=0 & & \mbox{if } X,Y,Z \in L \mbox{ and } X<Y, \\
\mu(X,Y)=0 & & \mbox{otherwise.}
\end{array} \right.$$

For $X \in L$, define $\mu(X)=\mu(V,X)$.

\medskip

Now we can define the {\em Poincar\'{e} polynomial}:
\begin{definition}
Let $\A$ be a real hyperplane arrangement in $\RR^n$ with intersection lattice $L$ and M\"obius function $\mu$. Let $t$ be an indeterminate. Define the {\em Poincar\'{e} polynomial} of $\A$ as follows:
$$\pi(\A,t)=\sum\limits_{X \in L} \mu(X)(-t)^{r(X)}$$
where $r(X)={\rm codim}(X)$ is the rank function.
\end{definition}

Zaslavsky's restriction-deletion theorem \cite{Z} claims the following connection between the Poincar\'{e} polynomials of the arrangement, the deleted arrangement and the restricted arrangement:
\begin{equation}\label{eqnPoinc}
\pi (\A , t) = \pi (\A^H , t)+t \cdot \pi (\A_H , t)
\end{equation}
for any hyperplane $H \in \A$.

\subsection{Module of derivations and free arrangements}\label{sec-app2}
Let $\A$ be a real hyperplane arrangement in $V=\RR^n$.
Define $S=S(V^*)$ to be the symmetric algebra of the dual space $V^*$ of $V$. Note that if $x_1,\dots, x_n$ is the basis for $V^*$, then $S \cong \RR [x_1, \dots, x_n]$.

A {\it derivation} of $S$ over $\RR$ is a linear map $\theta: S \to S$ over $\RR$ satisfying:
$\theta (fg)=f \theta (g)+ g \theta (f)$ for all $f,g \in S$. The set of derivations of $S$ over $\RR$ is denoted by ${\rm Der}_{\RR}(S)$.
One can easily see that ${\rm Der}_{\RR}(S)$ is a free $S$-module of rank $n$, where its basis is
$\{ D_1,\dots, D_n \}$ where $D_i(f)=\frac{\partial f}{\partial x_i}$ (for references, see \cite[Chapter 4]{OT}).

For any $f \in S$, define:
$$D(f)= \{ \theta \in {\rm Der}_{\RR}(S) \ | \ \theta(f) \in fS \}.$$
Note that $D(f)$ is an $S$-submodule of ${\rm Der}_{\RR}(S)$.

\medskip

In this setting, we can define the {\em module of $\A$-derivations} and a {\em free arrangement}:
\begin{definition}
Let $\A$ be a real hyperplane arrangement in $V=\RR^n$ with defining polynomial
$Q(\A)=\prod\limits_{H \in \A} \alpha_H$ where $H={\rm ker} (\alpha_H)$. The {\em module of $\A$-derivations} is: $D(\A)=D(Q(\A))$.
\end{definition}

\begin{definition}
An arrangement $\A$ is called {\em free} if $D(\A)$ is a free module over $S$.
\end{definition}

Let $S_p$ be the subspace of $S \cong \RR [x_1, \dots, x_n]$ consisting of $0$ and the homogeneous polynomials of degree $p$ for $p \geq 0$. For $p<0$, define $S_p=0$. Then: $S=\bigoplus_{p \in \ZZ} S_p$ is a graded $\RR$-algebra. A nonzero element $\theta  \in  {\rm Der}_{\RR}(S)$ is {\em homogeneous of polynomial degree $p$} if $\theta =\sum\limits_{k=1}^n f_k D_k$ and $f_k \in S_p$ for $1 \leq k \leq n$. In this case, we write
${\rm pdeg}(\theta)=p$.

\medskip

Here we define the notion of {\em exponents} of a free arrangement $\A$:
\begin{definition}
Let $\A$ be a free arrangement and let $\{ \theta_1,\dots, \theta_n \}$ be a homogeneous basis for $D(\A)$. We call ${\rm pdeg}(\theta_1),\dots, {\rm pdeg}(\theta_n)$ the {\em exponents} of $\A$ and write:
$${\rm exp} (\A) = \{ {\rm pdeg}(\theta_1),\dots, {\rm pdeg}(\theta_n) \}.$$
\end{definition}

Note that ${\rm exp} (\A)$ may have repetitions and its order should be neglected.

The {\it addition-deletion theorem} (\cite{Terao}, see also \cite[Theorem 4.51]{OT}) states the following connection between the freeness properties of $\A$, $\A^H$ and $\A_H$:
\begin{thm}\label{thmModDer}
Suppose that $\A$ is a non-empty hyperplane arrangement and let $H \in \A$. Any two of the following statements imply the third:
\begin{enumerate}
\item $\A$ is free with ${\rm exp} (\A) =\{ b_1, \dots, b_{n-1},b_n \}$.
\item $\A^H$ is free with ${\rm exp} (\A^H) =\{ b_1, \dots, b_{n-1},b_n-1 \}$.
\item $\A_H$ is free with ${\rm exp} (\A_H) =\{ b_1, \dots, b_{n-1} \}$.
\end{enumerate}

\end{thm}
Note that Equation (\ref{eqnPoinc}) in Section \ref{sec-app1} can be induced by Theorem \ref{thmModDer} (see \cite[Chapter 4]{OT}).

\end{document}